\newcommand{\ie}{\emph{i.e.}}
\newcommand{\eg}{\emph{e.g.}}
\newcommand{\cf}{\emph{cf.}}
\newcommand{\Real}{\mathbb{R}}
\newcommand{\sii}{L^2}
\newcommand{\Dom}{\mathrm{Dom}}
\newcommand{\divergence}{\mathrm{div}}
\newcommand{\ds}{\displaystyle}
\newcommand{\essinf}{\mathop{\mathrm{ess\;\!inf}}}
\newcommand{\der}{\mathrm{d}}
\newtheorem{Lemma}{Lemma}
\newtheorem{Theorem}{Theorem}
\newtheorem{Corollary}{Corollary}
\newtheorem{Proposition}{Proposition}
\newtheorem{Conjecture}{Conjecture}
\theoremstyle{definition}
\newtheorem{Remark}{Remark}
\numberwithin{equation}{section}
\definecolor{DarkGreen}{rgb}{0,0.5,0.1} 
\newcommand\soutD{\bgroup\markoverwith
{\textcolor{DarkGreen}{\rule[.5ex]{2pt}{1pt}}}\ULon}
\newcommand\soutP{\bgroup\markoverwith
{\textcolor{blue}{\rule[.5ex]{2pt}{1pt}}}\ULon}
\newcommand{\Hm}[1]{\leavevmode{\marginpar{\tiny%
$\hbox to 0mm{\hspace*{-0.5mm}$\leftarrow$\hss}%
\vcenter{\vrule depth 0.1mm height 0.1mm width \the\marginparwidth}%
\hbox to
0mm{\hss$\rightarrow$\hspace*{-0.5mm}}$\\\relax\raggedright #1}}}
\begin{document}
%
\title[Extremal domains for Robin eigenvalues]{
Bounds and extremal domains for Robin eigenvalues with negative boundary parameter}
\author[Antunes, Freitas and Krej\v{c}i\v{r}\'{\i}k]{
Pedro R.~S. Antunes, Pedro Freitas and David Krej\v{c}i\v{r}\'{\i}k}

\address{Group of Mathematical Physics, Faculdade de Ci\^{e}ncias da Universidade de Lisboa, Campo Grande, Edif\'{\i}cio C6
1749-016 Lisboa, Portugal} \email{prantunes@fc.ul.pt}

\address{Department of Mathematics, Faculty of Human Kinetics \&
Group of Mathematical Physics, Faculdade de Ci\^{e}ncias da Universidade de Lisboa, Campo Grande, Edif\'{\i}cio C6
1749-016 Lisboa, Portugal}
\email{psfreitas@fc.ul.pt}

\address{
Department of Theoretical Physics,
Nuclear Physics Institute,
Academy of Sciences,
25068 \v{R}e\v{z}, Czech Republic
}
\email{krejcirik@ujf.cas.cz}

\date{May 26, 2015}

\thanks{This research was partially supported by FCT (Portugal) through project
PTDC/MAT-CAL/4334/2014. The first author was also supported through the program ``Investigador FCT''
with reference IF/00177/2013. The third author was also supported by
the project RVO61389005 and the GACR grant No.\ 14-06818S}

\begin{abstract}
We present some new bounds for the first Robin eigenvalue 
with a negative boundary parameter. These
include the constant volume problem, where the bounds are based on the shrinking coordinate method,
and a proof that in the fixed perimeter case the disk maximises the first eigenvalue for all values
of the parameter. 
This is in contrast with what happens in the constant area problem, where
the disk is the maximiser only for small values of the boundary parameter. We also present sharp
upper and lower bounds for the first eigenvalue of the ball 
and spherical shells.

These results are complemented by the numerical optimisation of 
the first four and two eigenvalues 
in~$2$ and~$3$ dimensions, respectively, 
and an evaluation of the quality of the upper bounds obtained. 
We also study the bifurcations from the ball as the boundary parameter 
becomes large (negative).

\bigskip
\begin{itemize}
\item[\textbf{Keywords:}]
eigenvalue optimisation,  
Robin Laplacian, negative boundary parameter,
Bareket's conjecture
\item[\textbf{MSC (2010):}]
58J50, 35P15
\end{itemize}
\end{abstract}

\maketitle
\section{Introduction}
%

The study of extremal eigenvalues of the Laplace operator which had its origin in Lord Rayleigh's
book {\it The Theory of Sound}~\cite{Rayleigh_1877} has by now been a continuous active topic of research
among mathematicians and physicists for nearly one century and a half. In the case of Dirichlet and Neumann
boundary conditions it has been known since the $1920$'s and the $1950$'s, respectively, that the ball optimises
the first eigenvalue among domains with fixed volume, being a minimiser in the first case and a maximiser
in the second~\cite{Faber_1923,Krahn_1924,Krahn_1926,Szego_1954,Weinberger_1956}.
In the case of Dirichlet boundary conditions this implies that the second eigenvalue is optimised by two equal
balls~\cite{Krahn_1926}, while for the Neumann problem it is only known that this provides a bound for planar
simply connected domains~\cite{Girouard-Nadirashvili-Polterovich_2009}.

Recent numerical work has shown that, with some rare exceptions,
notably that of the $(d+1)^\mathrm{th}$ Dirichlet eigenvalue
in dimension~$d$ which is conjectured to be optimised by the ball,
one cannot expect extremal domains in the
mid-frequency range to be defined explicitly in terms of known
functions~\cite{Antunes-Freitas_2012,Antunes-Freitas_2015,Oudet_2004}. For
planar domains with fixed area, 
it has been shown that the disk
cannot be a minimiser for eigenvalues higher
than the third~\cite{Berger_2015}. On the other hand, this has prompted the study of what happens in the high-frequency
limit as the order of the eigenvalue goes to infinity, where there is again some structure. In particular, it
was shown by Dorin Bucur and the second author of the present paper that for planar domains with fixed perimeter
extremal domains converge to the disk~\cite{Bucur-Freitas_2013}.
In the case of fixed measure, the first two authors of the
present paper showed that minimisers of the Dirichlet problem within the class of rectangles converge to the
square~\cite{Antunes-Freitas_2013} and, moreover, 
it has been shown that convergence to the disk
in the general case is
equivalent to the well-known P\'{o}lya conjecture~\cite{Colbois-El_Soufi_2014}.

In this paper we are interested in extremal domains for eigenvalues of the Robin Laplacian, that is,
\begin{equation}\label{eq:robin}
    	\left\{
    \begin{aligned}
        -\Delta u &= \lambda u &\quad &\text{in $\Omega$} \,,\\
        \frac{\partial u}{\partial\nu}+\alpha \;\! u&=0 & &
        \text{on $\partial\Omega$} \,,
    \end{aligned}
\right.
\end{equation}
where $\Omega$ is a bounded domain in $\Real^{d}$
with outer unit normal~$\nu$
and the boundary parameter $\alpha$ is a real constant.

In the case of fixed measure of~$\Omega$ and positive boundary parameter,
it was only in $1986$ that it was proven that the
disk is still the extremal domain in two dimensions~\cite{Bossel_1986}, while the extension of this
result to higher dimensions had to wait until $2006$~\cite{Daners_2006}. The corresponding result for the second
eigenvalue was obtained in~\cite{Kennedy_2009},
with two equal balls being the minimal domain, again matching the Dirichlet
result. However, due to the presence of a boundary parameter in the Robin problem, the behaviour for higher
eigenvalues will be, in principle, more complex as may be seen from the numerical results in~\cite{Antunes-Freitas-Kennedy_2013}.
In particular, for a given fixed volume~$|\Omega|$
and small positive values of the boundary parameter~$\alpha$
it is conjectured in that
paper that the $n^\mathrm{th}$ eigenvalue $\lambda_n^\alpha(\Omega)$
will in fact be minimised by~$n$ equal balls,
but that this will not be
the case for larger values of the parameter. This switching between
extremal domains as the parameter changes was recently shown by the second and third authors of the present
paper to also play a role for negative values of the parameter,
even in the case of the first eigenvalue~\cite{FK7}.
More precisely, while it was shown that in two dimensions the disk remains
an extremal domain for small (negative) values of the
parameter (now a maximiser),
thus proving the long standing {Bareket's conjecture~\cite{Bareket_1977}
in that case,
it was also shown that for larger
(negative) values of the parameter it cannot remain the optimiser.
This provides the first known example where the
extremal domain for the \emph{lowest} eigenvalue
of the Laplace operator is not a ball.

The proof that the disk cannot remain the optimiser for all values of the boundary parameter is based on the comparison
between the asymptotic behaviour of the eigenvalues of disks and annuli as the boundary parameter goes to minus infinity,
and carries over to higher dimensions. More precisely, while the asymptotic behaviour of a ball $B_R$ with radius $R$ in $\Real^{d}$ is
given by
\begin{equation}\label{as.ball}
  \lambda_1^\alpha(B_R) = -\alpha^2 + \frac{d-1}{R} \, \alpha + o(\alpha)
  \,,
  \qquad
  \alpha\to -\infty
  \,,
\end{equation}
that of a spherical shell 
$A_{R_1,R_2} :=B_{R_2}\setminus\overline{B_{R_1}}$
with  radii $R_1 < R_2$
is given by
\begin{equation}\label{as.shell}
  \lambda_1^\alpha(A_{R_1,R_2})
  = -\alpha^2 + \frac{d-1}{R_2} \, \alpha + o(\alpha)
  \,,
  \qquad
  \alpha\to -\infty
  \,.
\end{equation}
We thus see that, if $R_{2}$ is larger than $R$,
$\lambda_1^\alpha(A_{R_1,R_2})$ must become larger than $\lambda_1^\alpha(B_R)$
for sufficiently large negative $\alpha$.

On the other hand, let us recall that Bareket
has proved her conjecture already in~\cite{Bareket_1977}
for a class of ``nearly circular domains''
and Ferone, Nitsch and Trombetti~\cite{Ferone-Nitsch-Trombetti_2015}
have shown recently that it holds
within the ``class of Lipschitz sets which are `close'
to a ball in a Hausdorff metric sense''.
Our proof from~\cite{FK7} for small (negative) values of the parameter
differs from immediate results based on a simple perturbation argument
(see, \eg, \cite[Sec.~2.3]{Lacey-Ockendon-Sabina_1998})
in that the smallness of~$\alpha$ is shown to depend on
the area of~$\Omega$ only (\cf~\cite[Rem.~2]{FK7}).

In this paper we shall complete these results in the following directions.
We shall begin by providing a new upper bound
for the first eigenvalue under a fixed volume restriction.
\begin{Theorem}\label{Thm.bound}
Let $\alpha \leq 0$.
Let~$\Omega$ be a strictly star-shaped bounded domain in~$\Real^d$
with Lipschitz boundary~$\partial\Omega$ and let~$B$ be the ball
of the same volume.
Then
\begin{equation}\label{bound}
  \lambda_1^\alpha(\Omega)
  \leq \frac{F(\Omega)}{F(B)} \, \lambda_1^{\tilde{\alpha}}(B)
  \,, \qquad \mbox{where} \qquad
  \tilde\alpha := \alpha \, \frac{|\partial\Omega|}{|\partial B|} \,
  \frac{F(B)}{F(\Omega)}
  \,,
\end{equation}
where~$F(\Omega)$ is a geometric quantity
related to the support function of~$\Omega$
defined by~\eqref{Freitas}.
\end{Theorem}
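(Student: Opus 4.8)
The plan is to use the variational characterization of the first Robin eigenvalue,
\[
  \lambda_1^\alpha(\Omega)
  = \inf_{0\neq u\in\Sobi(\Omega)}
    \frac{\ds\int_\Omega |\nabla u|^2 + \alpha\int_{\partial\Omega}|u|^2}
         {\ds\int_\Omega |u|^2}\,,
\]
and to feed it a test function obtained by transplanting the first eigenfunction of the ball~$B$ via a radial rescaling adapted to the support function of~$\Omega$. Concretely, writing~$\Omega$ as a star-shaped domain with respect to the origin, there is a natural radial diffeomorphism~$\Phi:B\to\Omega$ along rays; if~$v$ denotes the (radial) first eigenfunction of~$B$ for parameter~$\tilde\alpha$, I would take~$u:=v\circ\Phi^{-1}$ as the trial function for~$\Omega$. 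The whole point of the ``shrinking coordinate method'' alluded to in the abstract is that this change of variables, followed by the Cauchy--Schwarz/rearrangement inequality built into the geometric quantity~$F$, turns the three integrals above into the corresponding integrals on~$B$ up to the explicit factors~$F(\Omega)/F(B)$ and the boundary rescaling~$|\partial\Omega|/|\partial B|$ that together define~$\tilde\alpha$.

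First I would set up coordinates: fix the ``center'' making~$\Omega$ strictly star-shaped, let~$h$ be its support function and let~$r(\omega)$ describe~$\partial\Omega$ in polar form, so that the volume, the perimeter, and the Dirichlet and boundary integrals of a radial function on~$\Omega$ can all be written as integrals over the sphere~$\mathbb{S}^{d-1}$ against weights built from~$r$, $r'$ and~$h$. Next I would rescale~$B$ to have radius~$1$ WLOG and define~$F(\Omega)$ precisely as in~\eqref{Freitas} so that the Dirichlet energy of~$u$ on~$\Omega$ is bounded by~$\bigl(F(\Omega)/F(B)\bigr)$ times the Dirichlet energy of~$v$ on~$B$, while simultaneously the $L^2$ norm is comparable and the boundary term picks up the factor~$|\partial\Omega|/|\partial B|$; because~$\alpha\le 0$ the sign of the boundary term is favorable for an upper bound, so one wants the boundary integral of~$|u|^2$ to be \emph{at least} a definite multiple of that of~$|v|^2$. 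Assembling these three estimates in the Rayleigh quotient and recognizing that the resulting constant in front of the boundary term is exactly~$\tilde\alpha$, one reads off
\[
  \lambda_1^\alpha(\Omega)
  \;\le\;
  \frac{\int_\Omega|\nabla u|^2 + \alpha\int_{\partial\Omega}|u|^2}{\int_\Omega|u|^2}
  \;\le\;
  \frac{F(\Omega)}{F(B)}\,\lambda_1^{\tilde\alpha}(B)\,,
\]
which is~\eqref{bound}.

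The main obstacle I anticipate is purely bookkeeping-but-delicate: choosing the radial map~$\Phi$ (equivalently, the definition of~$F$) so that the \emph{same} quantity controls the Dirichlet energy, the $L^2$ norm, and the boundary integral with mutually consistent constants, and in particular so that the cross terms coming from~$\nabla u$ having both a radial and a spherical component are absorbed correctly. One must also verify that~$u\in\Sobi(\Omega)$ — guaranteed by the Lipschitz regularity of~$\partial\Omega$ and strict star-shapedness, which keep~$\Phi$ and its inverse Lipschitz — and that~$\tilde\alpha\le 0$ so that~$\lambda_1^{\tilde\alpha}(B)$ is the relevant (sign-definite) quantity; this follows since~$\alpha\le 0$ and all the geometric factors in the definition of~$\tilde\alpha$ are positive. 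A minor point worth checking is that the inequality~$F(\Omega)\ge F(B)$ (with equality iff~$\Omega$ is a ball), if true and needed, follows from an isoperimetric-type argument for the support function; but strictly speaking the statement only asserts the displayed bound, so I would not need the extremality of the ball in~$F$ itself — only the algebraic identity tying~$F(\Omega)$, $|\partial\Omega|$ and~$\tilde\alpha$ together.
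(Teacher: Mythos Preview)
Your approach is essentially the paper's: transplant the ball's radial first eigenfunction via shrinking coordinates and compute the Rayleigh quotient. The anticipated cross-term obstacle does not actually arise---since the test function depends only on the radial parameter~$t$, only the $(d,d)$ entry of the inverse metric enters, and this is exactly $h_\xi(x)^{-2}$; moreover the $L^2$-norm and boundary-term comparisons are exact identities (via $\int_{\partial\Omega} h_\xi\,\der x = d|\Omega| = d|B|$ and $\int_{\partial\Omega} |u|^2 = |\partial\Omega|\,|\psi(1)|^2$), so the only genuine inequalities are the variational one and the minimisation over the centre~$\xi$ that produces~$F(\Omega)$.
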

\noindent
The proof of Theorem~\ref{Thm.bound}
is done using an approach based on {\it shrinking coordinates}
which were introduced in~\cite{Polya-Szego} in the two-dimensional
case and then extended to higher dimensions in~\cite{FK5}.

We then consider the case of fixed perimeter
in dimension two
for which we show that,
in contrast with the fixed area problem, the disk
is now the maximiser for all negative values
of the boundary parameter $\alpha$.
\begin{Theorem}\label{Thm.perimeter}
Let $\alpha \leq 0$.
For bounded planar domains $\Omega$ of class~$C^2$, we have
\[
 \lambda_{1}^\alpha(\Omega)\leq\lambda_{1}^\alpha(B),
\]
where $B$ is a disk with the same perimeter as $\Omega$.
\end{Theorem}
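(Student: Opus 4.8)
The plan is to apply the variational characterisation
$\lambda_1^\alpha(\Omega)=\inf\bigl\{(\int_\Omega|\nabla u|^2+\alpha\int_{\partial\Omega}u^2)/\int_\Omega u^2:u\in\Sobi(\Omega)\setminus\{0\}\bigr\}$
to a test function built from the distance to the boundary, after first reducing to a simply connected domain. Since $\lambda_1^0\equiv0$ the statement is trivial for $\alpha=0$, so assume $\alpha<0$; testing with the constant function then gives $\lambda_1^\alpha(\Omega')\le\alpha|\partial\Omega'|/|\Omega'|<0$ for every bounded domain~$\Omega'$.

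\emph{Removal of holes.} If~$\Omega$ has a bounded complementary component~$H$, fill it in to obtain a $C^2$ domain~$\Omega'$ with $|\partial\Omega'|=|\partial\Omega|-|\partial H|$, and let $u>0$ be a first Robin eigenfunction of~$\Omega'$. Using $u|_\Omega$ as a trial function for~$\Omega$ together with the identity $\int_{\Omega'}|\nabla u|^2+\alpha\int_{\partial\Omega'}u^2=\lambda_1^\alpha(\Omega')\int_{\Omega'}u^2$, one gets after clearing the positive denominator $\int_\Omega u^2$ that $\lambda_1^\alpha(\Omega')-\lambda_1^\alpha(\Omega)\ge\bigl(-\lambda_1^\alpha(\Omega')\int_H u^2+\int_H|\nabla u|^2-\alpha\int_{\partial H}u^2\bigr)/\int_\Omega u^2>0$, the bracket being a sum of positive terms since $\lambda_1^\alpha(\Omega')<0$, $u>0$ and $\alpha<0$. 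Iterating over all holes yields a simply connected $C^2$ domain $\widehat\Omega\supseteq\Omega$ with $|\partial\widehat\Omega|\le|\partial\Omega|$ and $\lambda_1^\alpha(\Omega)\le\lambda_1^\alpha(\widehat\Omega)$.

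\emph{The simply connected case.} Let $B=B_{\widehat R}$ with $2\pi\widehat R=|\partial\widehat\Omega|$, let $\psi_1>0$ be its radial first eigenfunction — since $\lambda:=\lambda_1^\alpha(B)<0$ one has $\psi_1(x)=I_0(\kappa|x|)$ with $\kappa=\sqrt{-\lambda}$ — and put $f(t):=\psi_1(\widehat R-t)$, $v:=f\bigl(\dist(\cdot,\partial\widehat\Omega)\bigr)$. With $d:=\dist(\cdot,\partial\widehat\Omega)$ one has $d=0$ on $\partial\widehat\Omega$ and $|\nabla d|=1$ a.e., so, writing $L(t):=\mathcal H^1(\{d=t\})$ and $\widehat\rho$ for the inradius, the coarea formula expresses the Rayleigh quotient of~$v$ as $\bigl(\int_0^{\widehat\rho}f'^2L+\alpha f(0)^2|\partial\widehat\Omega|\bigr)/\int_0^{\widehat\rho}f^2L$. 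Because $\psi_1$ solves the radial eigenvalue equation, $f$ satisfies $-(\ell f')'=\lambda\ell f$ on $(0,\widehat R)$ with weight $\ell(t):=2\pi(\widehat R-t)$ and data $f'(0)=\alpha f(0)$, $f'(\widehat R)=0$; integrating this against~$f$ gives $\int_0^{\widehat R}\ell h=-\alpha f(0)^2\ell(0)=-\alpha f(0)^2|\partial\widehat\Omega|$ with $h:=f'^2-\lambda f^2=f'^2+|\lambda|f^2\ge0$. Hence this quantity is $\le\lambda$ exactly when $\int_0^{\widehat\rho}Lh\le\int_0^{\widehat R}\ell h$, which follows from $h\ge0$ and the pointwise estimate $L(t)\le\ell(t)$ on $(0,\widehat\rho)$ via $\int_0^{\widehat\rho}Lh\le\int_0^{\widehat\rho}\ell h\le\int_0^{\widehat R}\ell h$. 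The bound $L(t)\le2\pi(\widehat R-t)$ is the geometric heart: for a.e.~$t$, $L(t)=P(\widehat\Omega_t)$ with $\widehat\Omega_t=\{d>t\}$; since $\widehat\Omega$ is simply connected so is each component of $\widehat\Omega_t$, whence by Gauss--Bonnet the total signed boundary curvature of $\widehat\Omega_t$ equals $2\pi\#\{\text{components}\}\ge2\pi$, so the perimeter of the inner parallel set obeys $\tfrac{d}{dt}P(\widehat\Omega_t)\le-2\pi$ wherever $P$ is differentiable, with only downward jumps at the topology changes, and integration gives $P(\widehat\Omega_t)\le P(\widehat\Omega)-2\pi t$. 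Thus $\lambda_1^\alpha(\widehat\Omega)\le\lambda_1^\alpha(B_{\widehat R})$.

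\emph{Restoring the perimeter, and the main obstacle.} Since $\widehat R\le R:=|\partial\Omega|/2\pi$, it remains to check that $r\mapsto\lambda_1^\alpha(B_r)$ is non-decreasing for $\alpha\le0$; by the scaling $\lambda_1^\alpha(B_r)=r^{-2}\lambda_1^{\alpha r}(B_1)$ and the Hadamard formula $\partial_\beta\lambda_1^\beta(B_1)=\int_{\partial B_1}\psi^2/\int_{B_1}\psi^2$ this reduces to $|\beta|\int_{\partial B_1}\psi^2\ge2\int_{B_1}|\nabla\psi|^2$ for $\psi(x)=I_0(\kappa|x|)$, $\kappa=\sqrt{-\lambda_1^\beta(B_1)}$, $|\beta|=\kappa I_1(\kappa)/I_0(\kappa)$, and, after evaluating the elementary Bessel integrals, to $\kappa\bigl(I_0(\kappa)^2-I_1(\kappa)^2\bigr)\ge I_0(\kappa)I_1(\kappa)$ on $(0,\infty)$, which holds because the difference of the two sides vanishes at $\kappa=0$ and has derivative $I_0(\kappa)I_1(\kappa)/\kappa>0$. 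Combining, $\lambda_1^\alpha(\Omega)\le\lambda_1^\alpha(\widehat\Omega)\le\lambda_1^\alpha(B_{\widehat R})\le\lambda_1^\alpha(B_R)=\lambda_1^\alpha(B)$. I expect the delicate point to be the geometric inequality $P(\widehat\Omega_t)\le P(\widehat\Omega)-2\pi t$ for an arbitrary, possibly strongly non-convex, simply connected $C^2$ domain: one must control the level sets of~$d$ and the cut locus and justify the behaviour of the perimeter at the finitely many topology changes of $\widehat\Omega_t$; everything else is routine integration by parts, the coarea formula, or explicit Bessel estimates. (The first step is genuinely needed: for an annulus one has $L(t)>2\pi(\widehat R-t)$, so the bare test function $f(\dist(\cdot,\partial\Omega))$ does not work.)
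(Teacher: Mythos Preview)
Your proof is correct and rests on the same two pillars as the paper's: a parallel-coordinates comparison reducing $\lambda_1^\alpha(\Omega)$ to the disk of the same (outer) perimeter, and the monotonicity $R\mapsto\lambda_1^\alpha(B_R)$ for $\alpha<0$ to pass from that disk to the one with the full perimeter.

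The organisation differs in three places. First, the paper handles multiply connected~$\Omega$ by basing the parallel coordinates on the outer boundary component only (this is built into the cited result from~\cite{FK7}, Theorem~\ref{Thm.better} here), whereas you first fill the holes via a direct trial-function argument; both work, and yours is pleasantly self-contained. Second, the paper routes the comparison through an intermediate Robin--Neumann annulus, proving $\lambda_1^\alpha(\Omega)\le\mu_1^\alpha(A_{R_1,R_2})\le\lambda_1^\alpha(B_{R_2})$ (Theorem~\ref{Thm.better} and Proposition~\ref{Prop.test}), while you compare $\widehat\Omega$ directly to $B_{\widehat R}$; the common geometric ingredient is exactly the level-set bound $L(t)\le L_0-2\pi t$ for inner parallel sets of a simply connected $C^2$ planar domain that you rightly flag as the crux --- this is a classical fact (Fiala, Hartman, Sz.-Nagy) underlying the Payne--Weinberger method and the proof of Theorem~\ref{Thm.better} in~\cite{FK7}, so you may simply cite it rather than attempt the cut-locus analysis from scratch. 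Third, your monotonicity proof via the scaling $\lambda_1^\alpha(B_r)=r^{-2}\lambda_1^{\alpha r}(B_1)$ and the Bessel identity $\tfrac{d}{d\kappa}\bigl[\kappa(I_0^2-I_1^2)-I_0I_1\bigr]=I_0I_1/\kappa$ is more elementary than the paper's (Theorem~\ref{Thm.monotonicity}), which derives a variational formula for $\partial_R\lambda_1^\alpha(B_R)$ and then feeds in the upper bound of Theorem~\ref{Thm.ballbounds} to control its sign.
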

\noindent
The proof of Theorem~\ref{Thm.perimeter} is based on
an intermediate result from~\cite{FK7}
established with help of \emph{parallel coordinates}.

Complementing the above results, 
we provide sharp bounds for the first eigenvalue of 
the ball in any dimensions.
\begin{Theorem}\label{Thm.ballbounds}
Let $B_{R}$ denote the $d$-dimensional ball of radius~$R$
and denote by $\lambda_{1}^\alpha(B_{R})$ its first
Robin eigenvalue. Then we have
\[
 -\frac{\ds 1}{\ds 2}\alpha^{2}+\frac{\ds (d-1)}{\ds 2R}\alpha+ \frac{\ds \alpha}{\ds 2R}
 \sqrt{(d-1-\alpha R)^2+4d} <\lambda_{1}^\alpha(B_{R})< -\alpha^{2}+\frac{\ds (d-1)}{\ds R}\alpha
\]
for all negative $\alpha$.
\end{Theorem}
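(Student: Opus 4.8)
The plan is to reduce everything to an analysis of the first eigenfunction of the ball, which is radial and explicitly expressible in terms of Bessel functions. Writing $\lambda_1^\alpha(B_R) = -\mu^2$ if the eigenvalue is negative (or $=k^2$ if positive), the radial eigenfunction solves a modified (resp.\ standard) Bessel equation, and the Robin condition at $r=R$ becomes a transcendental equation relating $\mu$ (resp.\ $k$) to $\alpha$, $R$ and $d$. The two inequalities should then both follow by judicious use of the variational (Rayleigh quotient) characterization
\[
 \lambda_1^\alpha(B_R) = \min_{0\neq u\in W^{1,2}(B_R)}
 \frac{\int_{B_R}|\nabla u|^2 + \alpha\int_{\partial B_R}|u|^2}{\int_{B_R}|u|^2}
\]
together with a careful choice of test function, rather than by directly estimating Bessel function zeros.

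For the \textbf{upper bound} $\lambda_1^\alpha(B_R) < -\alpha^2 + (d-1)\alpha/R$, the natural test function is $u(x) = e^{\alpha|x|}$ (or $e^{-\alpha|x|}$ with the sign chosen so that the boundary term has the right sign), which is exactly the function that would make the bulk term $-\Delta u - \lambda u$ vanish pointwise for the ``flat'' exponential with $\lambda = -\alpha^2$ in one dimension; the discrepancy in $d$ dimensions comes from the curvature term $(d-1)/r$ in the radial Laplacian. Plugging $u=e^{\alpha r}$ into the Rayleigh quotient, one computes $|\nabla u|^2 = \alpha^2 e^{2\alpha r}$, and the boundary term is $\alpha e^{2\alpha R}|\partial B_R|$; integrating in radial coordinates and comparing with $-\alpha^2\int_{B_R}|u|^2$ the surplus is governed by $\int_0^R \frac{d-1}{r} r^{d-1} e^{2\alpha r}\,dr$ type quantities, and the strict inequality should pop out after one integration by parts. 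The factor $(d-1)/R$ (rather than a smaller number) arises because most of the mass of $e^{2\alpha r}$ concentrates near $r=R$ as $\alpha\to-\infty$, but for finite $\alpha$ one must check the inequality is strict for \emph{all} negative $\alpha$, which is where a clean integration-by-parts identity is preferable to an asymptotic estimate.

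For the \textbf{lower bound}, I expect the cleanest route is to work directly with the ODE satisfied by the radial eigenfunction $\phi(r)$, namely $-\phi'' - \frac{d-1}{r}\phi' = \lambda\phi$ with $\phi'(0)=0$ and $\phi'(R) + \alpha\phi(R) = 0$, and to exploit monotonicity/sign properties of $\phi$ and $\phi'$ (the ground state is of one sign and, for $\alpha<0$, is increasing in $r$). Multiplying the ODE by a suitable weight and integrating, or equivalently testing the Rayleigh quotient against a one-parameter family such as $u = e^{\beta r}$ and optimizing over $\beta$, should yield a quadratic-in-$\lambda$ inequality whose boundary is precisely
\[
 \lambda = -\tfrac12\alpha^2 + \tfrac{d-1}{2R}\alpha + \tfrac{\alpha}{2R}\sqrt{(d-1-\alpha R)^2 + 4d};
\]
indeed the right-hand side solves $R\lambda^2 - \alpha(d-1)\lambda + \big(\alpha^2 R\lambda - \alpha^3(d-1) - \text{something}\big)=0$, so the strategy is to reverse-engineer which test function produces exactly this quadratic. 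A plausible candidate is to use the true eigenfunction's behavior at the endpoints: write $\int_{B_R}|\nabla\phi|^2 = -\alpha R^{d-1}\omega_{d-1}\phi(R)^2$ (from the boundary condition) and bound $\int\phi^2$ from above in terms of $\phi(R)^2$ using that $\phi$ is increasing, which relates $\lambda$ to $\alpha$ and yields the desired quadratic after estimating the ``overlap'' integral $\int_0^R \phi\phi' r^{d-1}\,dr$.

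The main obstacle I anticipate is the lower bound: guessing the test function (or the integral identity) that reproduces the rather intricate expression with the square root is not routine, and one must also verify that the inequality is \emph{strict} and holds for every $\alpha<0$ (including small $|\alpha|$, where $\lambda_1^\alpha(B_R)$ can be positive and the modified-Bessel picture must be replaced by the ordinary-Bessel one — or, better, handled uniformly through the Rayleigh quotient without ever passing to the ODE's explicit solution). I would therefore try to phrase the whole argument variationally: choose $u_\beta(x) = e^{\beta|x|}$, compute the Rayleigh quotient $\mathcal{R}(\beta)$ as an explicit function of $\beta$, $\alpha$, $R$, $d$, and then show $\min_\beta \mathcal{R}(\beta)$ equals the claimed right-hand side while $\lambda_1^\alpha(B_R) < \mathcal{R}(\alpha)$ strictly gives the upper bound — with the lower bound coming from a separate, sharper argument using the positivity and monotonicity of the genuine ground state, since no single exponential can be a lower bound.
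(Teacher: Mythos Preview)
Your approach to the \textbf{upper bound} is essentially the paper's: the test function $e^{\alpha r}$ (equivalently $e^{\alpha(R-r)}$) in the radial Rayleigh quotient, followed by an integration-by-parts identity to show the resulting bound is strictly better than $-\alpha^2+(d-1)\alpha/R$. That part is fine.

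Your plan for the \textbf{lower bound}, however, has a genuine gap. None of the routes you sketch can reach the stated inequality. Testing the Rayleigh quotient with $e^{\beta r}$ and optimising over $\beta$ still only produces \emph{upper} bounds, as you yourself note; bounding $\int\phi^2$ from above by $C\phi(R)^2$ pushes a negative Rayleigh quotient \emph{down}, not up; and the ``reverse-engineered'' polynomial you wrote is not even quadratic in $\lambda$. The paper does something you did not anticipate: it works directly with the transcendental equation $kI_{\mu+1}(kR)+\alpha I_\mu(kR)=0$ (here $\mu=(d-2)/2$, $k=\sqrt{-\lambda}$) and invokes a \emph{sharp known inequality for ratios of modified Bessel functions} due to Amos,
\[
 \frac{I_{\mu+1}(z)}{I_\mu(z)}>\frac{z}{(\mu+\tfrac12)+\sqrt{z^2+(\mu+\tfrac32)^2}},
\]
to turn the transcendental equation into an explicit inequality for $k$. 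After checking (via the already-proved upper bound) that both sides are positive, one squares and obtains a quadratic inequality in $k^2$ whose smaller root is exactly the claimed lower bound. This Bessel-ratio estimate is the missing key lemma; without it, there is no evident way to manufacture the square-root expression from purely variational or ODE-monotonicity arguments.

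One small correction: your worry that $\lambda_1^\alpha(B_R)$ might be positive for small $|\alpha|$ is unfounded. For any $\alpha<0$ the constant test function gives $\lambda_1^\alpha(B_R)\le \alpha\,|\partial B_R|/|B_R|<0$, so one is always in the modified-Bessel regime and $k=\sqrt{-\lambda}$ is well defined throughout.
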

\noindent
Note that this result actually states that the first eigenvalue of the ball is smaller than the first two
terms in the asymptotic expansion~\eqref{as.ball} for all negative values of the boundary parameter $\alpha$.
Furthermore, it is not difficult to check that the lower bound satisfies the same two-term asymptotics, with the
next term being of order $O(1)$.
We also see that, for fixed $\alpha$, $\lambda_{1}^\alpha(B_{R})$ goes to $-\infty$
with $R^{-1}$ as $R$ goes to zero.
On the other hand, it follows from the upper bound that, 
whenever~$\alpha$ is negative, $\lambda_{1}^\alpha(B_{R})$
does not go to $0 = \lambda_{1}^\alpha(\Real^d)$ 
as $R \to \infty$ (see also Remark~\ref{Rem.nrs} below).

We also establish sharp lower and upper bounds for
the first Robin eigenvalue in $d$-dimensional 
spherical shells, which, as explained above, we conjecture are the extremal sets for large negative $\alpha$.
\begin{Theorem}\label{Thm.shell}
Let~$A_{R_1,R_2}$ denote the $d$-dimensional spherical shell 
with inner and outer radii 
given by~$R_1$ and~$R_2$, respectively,
and denote by $\lambda_1^\alpha(A_{R_1,R_2})$ its
first Robin eigenvalue. Then we have
\begin{equation}\label{bound.shell}
  - \alpha^2 + \left( \frac{d-1}{R_2} + \frac{2}{R_2-R_1} \right) \alpha
  < \lambda_1^\alpha(A_{R_1,R_2}) <
  - \alpha^2 + \frac{d-1}{R_2} \, \alpha
  \,.
\end{equation}
for all negative $\alpha$.
\end{Theorem}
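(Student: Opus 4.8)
The plan is to exploit the rotational symmetry of the spherical shell to reduce the eigenvalue problem to a one-dimensional radial problem, and then bound the resulting radial eigenvalue. The first Robin eigenfunction on $A_{R_1,R_2}$ is radial (this follows from a standard argument: the first eigenfunction does not change sign, hence is invariant under the rotation group), so $\lambda_1^\alpha(A_{R_1,R_2})$ coincides with the lowest eigenvalue of the ordinary differential operator $-u'' - \frac{d-1}{r} u'$ on $(R_1,R_2)$ with Robin conditions $-u'(R_1) + \alpha u(R_1) = 0$ at the inner boundary (note the sign flip because the outer normal points toward decreasing $r$ there) and $u'(R_2) + \alpha u(R_2) = 0$ at the outer boundary. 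Equivalently, via the substitution $v(r) = r^{(d-1)/2} u(r)$, one gets a Schrödinger-type operator on $(R_1,R_2)$, but I expect it is cleaner to work directly with the weighted Rayleigh quotient
\[
  \lambda_1^\alpha(A_{R_1,R_2}) = \min_{u} \frac{\ds \int_{R_1}^{R_2} u'(r)^2 \, r^{d-1}\,\der r + \alpha R_2^{d-1} u(R_2)^2 + \alpha R_1^{d-1} u(R_1)^2}{\ds \int_{R_1}^{R_2} u(r)^2 \, r^{d-1}\,\der r} \,.
\]

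For the upper bound, I would insert a well-chosen test function into this quotient. The natural candidate, suggested by the asymptotics~\eqref{as.shell}, is $u(r) = e^{-\alpha r}$ (or $u(r) = r^{-(d-1)/2} e^{-\alpha r}$, whichever makes the boundary terms simplest), since $e^{-\alpha r}$ is the exact first eigenfunction in the one-dimensional half-line case and captures the $-\alpha^2$ behaviour. Plugging this in, the boundary term at $r = R_2$ will produce exactly $-\alpha^2 + \frac{d-1}{R_2}\alpha$ after dividing out, up to corrections; the inner boundary term at $R_1$, which involves $\alpha R_1^{d-1} e^{-2\alpha R_1} > 0$ for $\alpha < 0$, works in our favour since adding a positive quantity... wait — for $\alpha<0$ the inner term $\alpha R_1^{d-1}u(R_1)^2$ is negative, which also helps push the quotient down. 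The remaining task is to verify that the bulk Dirichlet integral contributes exactly the right cross terms so that the full quotient is $< -\alpha^2 + \frac{d-1}{R_2}\alpha$; I expect this to reduce to an elementary but slightly delicate inequality comparing $\int_{R_1}^{R_2} \alpha^2 e^{-2\alpha r} r^{d-1}\,\der r$ against the boundary contributions via integration by parts, where the strictness comes from the fact that $e^{-\alpha r}$ is not the true eigenfunction on the shell.

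For the lower bound, the strategy is different: I would derive a lower bound on the Rayleigh quotient by a substitution/change of variables that reduces to a solvable model. One clean route is to use the factorisation $-u'' - \frac{d-1}{r}u' - \lambda u = 0$ and compare with a one-dimensional Robin problem on an interval of length $R_2 - R_1$, for which the first eigenvalue is known explicitly in terms of a transcendental equation $\sqrt{-\lambda}\tanh(\sqrt{-\lambda}(R_2-R_1)/2) = \dots$; alternatively, and more in the spirit of the ball bounds in Theorem~\ref{Thm.ballbounds}, one substitutes a trial logarithmic-derivative ansatz $u'/u = $ const into the radial equation and tracks the defect. The extra term $\frac{2}{R_2-R_1}\alpha$ in the lower bound of~\eqref{bound.shell} strongly suggests comparison with the one-dimensional interval $(R_1,R_2)$ with Robin conditions on both ends: the lowest eigenvalue of $-v''$ on an interval of length $\ell := R_2 - R_1$ with parameter $\alpha$ on both ends behaves like $-\alpha^2 + \frac{4}{\ell}\alpha \cdot \tfrac12 + \dots$, and combined with the curvature term $\frac{d-1}{R_2}\alpha$ this gives precisely the claimed bound. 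I would make this rigorous by bounding the weight $r^{d-1}$ between $R_1^{d-1}$ and $R_2^{d-1}$ and handling the first-order term $\frac{d-1}{r}u'$ as a perturbation, choosing signs carefully so that all estimates go the right way for $\alpha < 0$.

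The main obstacle I anticipate is the lower bound, specifically getting the \emph{sharp} constant $\frac{2}{R_2-R_1}$ rather than some cruder multiple of $\frac{1}{R_2-R_1}$: naive weight estimates on $r^{d-1}$ typically lose track of the exact coefficient, so one likely needs either the exact solution of an auxiliary one-dimensional Robin problem together with a monotonicity/comparison argument, or a carefully engineered substitution that diagonalises the curvature term exactly. The upper bound, by contrast, should be a matter of choosing the exponential test function and pushing through an integration-by-parts computation, with strictness coming for free because the trial function is not an eigenfunction on the shell.
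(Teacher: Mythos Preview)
Your upper-bound strategy is essentially the paper's: the test function $e^{-\alpha r}$ differs from the paper's $\phi(r)=e^{\alpha(R_2-r)}$ only by a constant factor, and the paper likewise reduces the resulting quotient to the ball computation~\eqref{pre.Gamma}--\eqref{crucial} via an integration-by-parts identity on the function $\Gamma_d$. So that half is fine.

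The lower bound, however, has a genuine gap. You correctly diagnose that naive weight estimates on $r^{d-1}$ will not produce the sharp coefficient $\frac{2}{R_2-R_1}$, and that the curvature contribution must come out as exactly $\frac{(d-1)\alpha}{R_2}$; but none of the routes you sketch actually delivers this. Comparison with the one-dimensional interval problem does not control the first-order term $\frac{d-1}{r}u'$ with the right constant (note that $u'$ changes sign on $(R_1,R_2)$, so treating it as a perturbation with a definite sign fails), and a \emph{constant} logarithmic-derivative ansatz $u'/u=\mathrm{const}$ cannot match the Robin conditions at both endpoints simultaneously, since the outer normals point in opposite directions.

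The paper's device is a non-constant version of exactly this last idea. One introduces an auxiliary Lipschitz function $\eta:[R_1,R_2]\to\Real$ with $\eta(R_1)=-1$ and $\eta(R_2)=1$, writes the boundary terms as $\int_{R_1}^{R_2}[r^{d-1}\phi^2\eta]'\,\der r$, and completes the square in the numerator of the Rayleigh quotient to obtain
\[
  Q[\phi] \geq \int_{R_1}^{R_2}\Bigl(-\alpha^2\eta^2 + \alpha\eta' + \alpha(d-1)\tfrac{\eta}{r}\Bigr)\phi^2\,r^{d-1}\,\der r \,.
\]
The specific choice $\eta(r)=\frac{2r-(R_1+R_2)}{R_2-R_1}$ (linear interpolation between the required boundary values) makes the bracket attain its minimum at $r=R_2$, where it equals precisely $-\alpha^2+\frac{2\alpha}{R_2-R_1}+\frac{(d-1)\alpha}{R_2}$. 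This is the ``carefully engineered substitution'' you allude to in your last paragraph, but the concrete choice of $\eta$ and the verification that the infimum sits at the outer radius are the missing ingredients.
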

\noindent
Here the upper bound is optimal up to the second order as $\alpha \to -\infty$,
\cf~\eqref{as.shell}, and we see that, as in the case of the ball, the eigenvalue is bounded by the first two
terms in the asymptotics. The lower bound follows the first-order
asymptotics only.

Finally, we perform a numerical study to obtain insight into the structure that is to be expected for this
problem. In particular, our results support the conjecture that the first eigenvalue is also maximised by the ball for small
negative values of $\alpha$ in three dimensions 
and that both in this case and in the plane shells with varying (increasing)
radii become the extremal domain as $\alpha$ becomes more negative.
We also study the optimisation of higher eigenvalues.
Based on these numerical simulations, we formulate some conjectures at the end of the paper.

\section{The first eigenvalue of balls and shells}
%
Let~$\Omega$ be a bounded domain in~$\Real^d$ ($d \geq 2$)
with Lipschitz boundary~$\partial\Omega$ and $\alpha \in \Real$.
As usual, we understand~\eqref{eq:robin} as a spectral problem for
the self-adjoint operator $-\Delta_\alpha^\Omega$
in the Hilbert space $\sii(\Omega)$
associated with the closed quadratic form
\begin{equation}\label{form}
  Q_\alpha^\Omega[u] := \|\nabla u\|_{\sii(\Omega)}^2
  +\alpha\;\!\|u\|_{\sii(\partial\Omega)}^2
  \,, \qquad
  \Dom(Q_\alpha^\Omega) := W^{1,2}(\Omega)
  \,.
\end{equation}
The lowest point in the spectrum of $-\Delta_\alpha^\Omega$
can be characterised by the variational formula
\begin{equation}\label{Rayleigh}
  \lambda_1^\alpha(\Omega)
  = \inf_{\stackrel[u\not=0]{}{u \in W^{1,2}(\Omega)}}
  \frac{Q_\alpha^\Omega[u]}
  {\,\|u\|_{\sii(\Omega)}^2}
  \,.
\end{equation}
Since the embedding $W^{1,2}(\Omega) \hookrightarrow \sii(\Omega)$ is compact,
we know that $\lambda_1^\alpha(\Omega)$ is indeed a discrete eigenvalue
and the infimum is achieved by a function $u_1^\alpha \in W^{1,2}(\Omega)$.

Using a constant test function in~\eqref{Rayleigh},
we get
\begin{equation}\label{bound.trivial}
  \lambda_1^\alpha(\Omega) \leq
  \alpha \, \frac{|\partial\Omega|}{|\Omega|}
  \,.
\end{equation}
Here $|\cdot|$ denotes the $d$-dimensional Lebesgue measure in the denominator
and the $(d-1)$-dimensional Hausdorff measure in the numerator.
It follows that $\lambda_1^\alpha(\Omega)$ is negative whenever $\alpha < 0$.

Now let~$B_R$ be a $d$-dimensional ball of radius~$R$.
By the rotational symmetry and regularity,
we deduce from~\eqref{Rayleigh}
\begin{equation}\label{Rayleigh.ball}
  \lambda_1^\alpha(B_R)
  = \inf_{\stackrel[\phi\not=0]{}{\phi \in C^1([0,R])}}
  \frac{\displaystyle \int_0^R |\psi'(r)|^2 \, r^{d-1} \, \der r
  + \alpha \, R^{d-1} \, |\phi(R)|^2}
  {\displaystyle \int_0^R |\phi(r)|^2 \, r^{d-1} \, \der r}
  \,.
\end{equation}
We know that $\lambda_1^\alpha(B_R)$ is simple
and that the infimum in~\eqref{Rayleigh.ball}
is achieved by a smooth positive function~$\phi_1$ satisfying
\begin{equation}\label{elliptic}
\left\{
\begin{aligned}
  -r^{-(d-1)}[r^{d-1} \phi'(r)]' &= \lambda \;\! \phi(r) \,,
  && r \in [0,R] \,,
  \\
  \phi'(0) &= 0 \,,
  \\
  \phi'(R) + \alpha \;\! \phi(R) &=0 \,.
\end{aligned}
\right.
\end{equation}

In fact, if $\alpha \leq 0$, we have an explicit solution
\begin{equation}\label{explicit}
  \phi_1(r) = r^{-\mu} \, I_{\mu}(kr)
  \,, \qquad
  \mu := \frac{d-2}{2}
  \,,
\end{equation}
where~$I_\mu$ is a modified Bessel function \cite[Sec.~9.6]{Abramowitz-Stegun}
and $k := \sqrt{-\lambda_1^\alpha(B_R)}$
is the smallest non-negative root of the equation
\begin{equation}\label{implicit}
  k I_\mu'(kR)
  - \mbox{$\frac{\mu}{R}$} I_\mu(kR)
  + \alpha I_\mu(kR) = 0
  \,.
\end{equation}
Using the identity (\cf~\cite[Sec.~9.6.26]{Abramowitz-Stegun})
\begin{equation}\label{AS.identity}
  I_\mu'(z) = I_{\mu+1}(z) + \mbox{$\frac{\mu}{z}$} I_\mu(z)
  \,,
\end{equation}
we see that~\eqref{implicit} is equivalent to
\begin{equation}\label{implicit.bis}
  k I_{\mu+1}(kR) + \alpha I_\mu(kR) = 0
  \,.
\end{equation}
\begin{Lemma}\label{Lem.monotonicity}
Let $\alpha < 0$. We have
\begin{equation}
  \forall r \in (0,R) \,, \qquad
  k r \, I_{\mu+1}(kr) + \alpha R \, I_\mu(kr) < 0
  \,,
\end{equation}
where~$k$ is the smallest positive root of~\eqref{implicit.bis}.
\end{Lemma}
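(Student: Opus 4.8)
The plan is to study the function $g(r) := k r\, I_{\mu+1}(kr) + \alpha R\, I_\mu(kr)$ on $[0,R]$ and show it is negative on $(0,R)$. First I would record the two boundary facts: at $r=0$ we have $g(0)=0$ (since $I_{\mu+1}(0)=0$ when $\mu+1>0$, and $I_\mu(0)=0$ when $\mu>0$, while for $\mu=0$ one has $g(0)=\alpha R\,I_0(0)=\alpha R<0$; either way $g(0)\le 0$), and at $r=R$ we have $g(R)= k R\, I_{\mu+1}(kR)+\alpha R\, I_\mu(kR) = R\bigl(kI_{\mu+1}(kR)+\alpha I_\mu(kR)\bigr)=0$ by the eigenvalue equation~\eqref{implicit.bis}. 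So $g$ vanishes at both endpoints (or is negative at the left endpoint), and the claim is that it does not rise above zero in between.

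The main idea is a differential-inequality / Sturm-type argument: compute $g'(r)$ using the Bessel identities and show that $g$ cannot have an interior local maximum with nonnegative value. Concretely, I would differentiate, using~\eqref{AS.identity} in the form $I_{\mu+1}'(z) = I_\mu(z) - \frac{\mu+1}{z}I_{\mu+1}(z)$ together with $I_\mu'(z) = I_{\mu+1}(z)+\frac{\mu}{z}I_\mu(z)$, to express $g'(r)$ again as a combination of $I_{\mu+1}(kr)$ and $I_\mu(kr)$ with explicit coefficients. Since all $I_\nu(kr)>0$ for $r>0$, the sign of $g$ and of $g'$ is governed by the ratio $t(r) := I_{\mu+1}(kr)/I_\mu(kr) > 0$, which is known to be increasing in $r$. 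Rewriting $g(r) = I_\mu(kr)\bigl(k r\, t(r) + \alpha R\bigr)$, it suffices to show $h(r) := k r\, t(r) + \alpha R < 0$ on $(0,R)$. Now $h(0)=\alpha R<0$, $h(R)=0$ by~\eqref{implicit.bis}, and the quantity $kr\,t(r)$ is strictly increasing in $r$ (product of the increasing positive factor $t(r)$ with the increasing positive factor $kr$). A strictly increasing function that equals $0$ at $r=R$ must be negative for all $r<R$; hence $h<0$ on $(0,R)$, and therefore $g<0$ there.

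The step I expect to be the main obstacle is establishing the monotonicity of $r\mapsto k r\, I_{\mu+1}(kr)/I_\mu(kr)$ cleanly — i.e.\ that the ratio $I_{\mu+1}/I_\mu$ is increasing (equivalently $I_{\mu+1}(z)/I_\mu(z)$ is increasing in $z>0$, which is a classical but not entirely trivial monotonicity property of modified Bessel functions, provable from the Turán-type inequality $I_\mu^2 > I_{\mu-1}I_{\mu+1}$ or from the continued-fraction / Riccati equation satisfied by the ratio). Once that monotonicity is in hand the rest is immediate. An alternative route that avoids quoting Bessel monotonicity is purely variational: interpret $\phi(r)=r^{-\mu}I_\mu(kr)$ as the ground state on $B_R$, note $g(r)$ is (up to a positive factor) proportional to $r^{d-1}\phi'(r) + \alpha R\, r^{d-2}\cdots$, more precisely to the flux $r^{d-1}\phi'(r)$ shifted so as to vanish at $r=R$; since $\phi>0$ solves $-(r^{d-1}\phi')' = \lambda r^{d-1}\phi$ with $\lambda<0$, the flux $r^{d-1}\phi'(r)$ is monotone, and one concludes as before. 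I would present whichever of these is shorter, most likely the variational one since it keeps everything within the framework already set up in~\eqref{elliptic}.
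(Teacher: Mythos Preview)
Your argument via the monotonicity of the Bessel ratio is correct: writing $g(r)=I_\mu(kr)\,h(r)$ with $h(r)=kr\,I_{\mu+1}(kr)/I_\mu(kr)+\alpha R$, the strict increase of $z\mapsto I_{\mu+1}(z)/I_\mu(z)$ (hence of $z\,I_{\mu+1}(z)/I_\mu(z)$) together with $h(R)=0$ gives $h<0$ on $(0,R)$ and thus $g<0$. (Your alternative ``variational'' sketch is not quite right as stated---$g$ is not simply a shifted flux $r^{d-1}\phi'$; there is an additional $r^{d-2}\phi$ term---so if you pursue that route you would still end up needing the monotonicity of $r\phi'(r)/\phi(r)$, which is essentially the same Bessel fact.)

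The paper's proof, however, is much shorter and avoids the Bessel-ratio monotonicity entirely. It exploits directly the \emph{definition} of $k$ as the smallest positive root: setting $F(z):=z\,I_{\mu+1}(z)+\alpha R\,I_\mu(z)$, one observes that $F(z_0)=0$ is equivalent (dividing by $R$) to $k':=z_0/R$ solving~\eqref{implicit.bis}, so the first positive zero of $F$ is exactly $z=kR$. Since the small-argument asymptotics $I_\nu(z)\sim (z/2)^\nu/\Gamma(\nu+1)$ make $F(z)<0$ for small $z>0$, continuity forces $F<0$ on all of $(0,kR)$, which is the claim. Your approach proves a stronger structural fact (monotonicity of $h$), but the paper's argument needs nothing beyond continuity and the minimality of $k$, and is correspondingly a one-liner.
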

\begin{proof}
If $\alpha < 0$, $k$~is the smallest \emph{positive} root of~\eqref{implicit.bis}.
Using the asymptotic formulae for small values of arguments
of Bessel functions (\cf~\cite[Sec.~9.6.7]{Abramowitz-Stegun}),
we know that
$
  z I_{\mu+1}(z) + \alpha R I_\mu(z)
$
is negative for all sufficiently small positive~$z$,
and hence for all~$z$ less than~$kR$,
where~$k$ is the first positive root of~\eqref{implicit.bis}.
\end{proof}

\subsection{An upper bound for \texorpdfstring{$\lambda_{1}^{\alpha}(B_{R})$}{ev}}
Now we give a proof of the upper bound in Theorem~\ref{Thm.ballbounds}.
Choosing in~\eqref{Rayleigh.ball} the test function

\begin{equation}\label{test}
  \phi(r) := e^{\alpha (R-r)}
  \,,
\end{equation}
we obtain the bound
\begin{equation}\label{pre.Gamma}
  \lambda_1^\alpha(B_R)
  \leq \alpha^2
  - \frac{2\alpha^2}{\Gamma_{d}(2\alpha R)}
\end{equation}
with
\begin{equation}\label{Gamma}
  \Gamma_{d}(x) := \int_x^0 \left(\frac{t}{x}\right)^{d-1} e^{-t+x} \, \der t
  \,.
\end{equation}
Hence the upper bound of Theorem~\ref{Thm.ballbounds}
follows provided that (recall that~$\alpha$ is negative)
\begin{equation}\label{crucial}
  \forall x < 0 \,, \qquad
  f(x) := \frac{x}{d-1} \frac{\Gamma_d(x)-1}{\Gamma_d(x)}
  > 1
  \,.
\end{equation}
To prove~\eqref{crucial}, we first notice the identity
\begin{equation}\label{first}
  \Gamma_d(x) = 1 + \frac{d-1}{x} \, \Gamma_{d-1}(x)
  \,,
\end{equation}
which can be established by an integration of parts.
Second, we have
\begin{equation}\label{second}
  \Gamma_{d-1}(x)
  = \int_x^0 \left(\frac{t}{x}\right)^{d-1} \frac{x}{t} \, e^{-t+x} \, \der t
  > \Gamma_{d}(x)
  \,,
\end{equation}
because $x/t > 1$ for all $t \in (x,0)$ with $x < 0$.
As a consequence of~\eqref{first} and~\eqref{second}, we have
\begin{equation}
  f(x) = \frac{\Gamma_{d-1}(x)}{\Gamma_d(x)}
  > 1
  \,,
\end{equation}
which proves~\eqref{crucial}
and thus concludes the proof of the upper bound of Theorem~\ref{Thm.ballbounds}.
\hfill\qed

\subsection{A lower bound for \texorpdfstring{$\lambda_{1}^{\alpha}(B_{R})$}{ev}}
To obtain the lower bound in Theorem~\ref{Thm.ballbounds} we shall use a different strategy
based directly on equation~\eqref{implicit.bis} and properties of quotients of Bessel functions.
A similar approach may also be used as an alternative way of establishing the upper bound in the previous section.

From~\cite{Amos_1974} we have
\begin{equation}
 \label{amosquotient}
 p_{\mu}(z):= \frac{\ds I_{\mu+1}(z)}{\ds I_{\mu}(z)}
  > \frac{\ds z}{\ds (\mu+1/2)+\sqrt{z^2+(\mu+3/2)^2}} \,,
  \;\; \mu,z>0 \,,
\end{equation}
where justification that strict inequality holds may be found in~\cite{Segura_2011}.
Applying this to equation~\eqref{implicit.bis} for negative $\alpha$ yields
\[
 k=-\alpha \frac{\ds I_{\mu}(k R)}{\ds I_{\mu+1}(k R)}
 = -\frac{\ds \alpha}{\ds p_{\mu}(k R)}< -\frac{\ds \alpha}{\ds kR}
 \left[ \mu+\frac{\ds 1}{\ds 2} + \sqrt{k^2R^2+\left(\mu+\frac{\ds 3}{\ds 2}\right)^2}\right],
\]
from which it follows that
\begin{equation}\label{aux1}
  k^2R+\alpha\left(\mu+\frac{\ds 1}{\ds 2}\right)
  <-\alpha \, \sqrt{k^2R^2+\left(\mu+\frac{\ds 3}{\ds 2}\right)^2} \,.
\end{equation}
Due to the upper bound proved in the previous section we know that
\[
 -k^2 < -\alpha^2+(2\mu+1)\frac{\ds \alpha}{\ds R}
\]
and thus
\[
  k^2R+\left(\mu+\frac{\ds 1}{\ds 2}\right)\alpha
  > \alpha^2 R-\left(\mu+\frac{\ds 1}{\ds 2}\right)\alpha>0
  \,,
\]
from which it follows that the left-hand side of~\eqref{aux1} is positive.
We may thus square both sides of~\eqref{aux1}
to obtain
\[
 k^4R^2+\alpha R\left[2\left(\mu+\frac{\ds 1}{\ds 2}\right)
 -\alpha R\right]k^2-2\left( \mu+1 \right)\alpha^2<0.
\]
This implies both a lower and an upper bounds for $k^2$
and while the lower bound is trivial, the upper bound yields the
desired lower bound for the eigenvalue $\lambda_{1}^{\alpha}(B_{R})=-k^2$.

\subsection{Monotonicity for balls}\label{Sec.monotonicity}
In this subsection we give a proof of the following monotonicity result.
\begin{Theorem}\label{Thm.monotonicity}
Let~$B_R$ be a ball of radius~$R$.
If $\alpha < 0$, then
$$
  R \mapsto \lambda_1^\alpha(B_R)
  \quad \mbox{is strictly increasing.}
$$
\end{Theorem}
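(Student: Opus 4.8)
The plan is to establish monotonicity through a Hadamard-type domain-variation argument combined with the explicit radial structure encoded in~\eqref{explicit}--\eqref{implicit.bis}. Since $\lambda_1^\alpha(B_R) = -k(R)^2$, where $k(R)$ is the smallest positive root of~\eqref{implicit.bis}, it suffices to show that $R \mapsto k(R)$ is strictly decreasing for $\alpha<0$; monotonicity of the eigenvalue follows immediately because $-k^2$ is a decreasing function of $k\ge 0$. An alternative and perhaps cleaner route is to scale the ball to a fixed reference ball $B_1$ and observe that the Robin problem on $B_R$ with parameter $\alpha$ is unitarily equivalent to a weighted problem on $B_1$ in which the scaling produces a factor $R^{-2}$ in front of the Dirichlet part and $R^{-1}$ in front of the boundary term; the eigenvalue then becomes $\lambda_1^\alpha(B_R) = R^{-2}\mu_1(\alpha R)$, where $\mu_1(\beta) := \lambda_1^{\beta}(B_1)$, reducing the claim to a one-variable statement about $\beta \mapsto \mu_1(\beta)$ that can be attacked directly.

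First I would record the scaling identity $\lambda_1^\alpha(B_R) = R^{-2}\,\lambda_1^{\alpha R}(B_1)$, which is immediate from~\eqref{Rayleigh.ball} after the substitution $r \mapsto r/R$. Writing $\beta := \alpha R < 0$ and $\Lambda(\beta) := \lambda_1^\beta(B_1)$, we have $\lambda_1^\alpha(B_R) = R^{-2}\,\Lambda(\alpha R)$, so
\[
  \frac{\der}{\der R}\,\lambda_1^\alpha(B_R)
  = -\frac{2}{R^3}\,\Lambda(\alpha R) + \frac{\alpha}{R^2}\,\Lambda'(\alpha R)
  = \frac{1}{R^3}\Bigl( \beta\,\Lambda'(\beta) - 2\,\Lambda(\beta)\Bigr)\Big|_{\beta = \alpha R}.
\]
Thus the theorem is equivalent to the inequality $\beta\,\Lambda'(\beta) - 2\,\Lambda(\beta) > 0$ for all $\beta<0$. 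Next I would compute $\Lambda'(\beta)$ by Hadamard's formula for a perturbation of the boundary parameter: differentiating the eigenvalue equation and using that the $L^2$-normalised eigenfunction $u_1$ satisfies $Q_\beta^{B_1}[u_1]=\Lambda(\beta)$, one gets $\Lambda'(\beta) = \|u_1\|_{\sii(\partial B_1)}^2 > 0$. Since $\beta<0$, the term $\beta\,\Lambda'(\beta)$ is negative, but $\Lambda(\beta) = \lambda_1^\beta(B_1)$ is also negative by~\eqref{bound.trivial}; so both terms contribute with the right sign only after a more careful balance, and this is where the real work lies.

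To close the gap I would exploit the explicit eigenfunction $\phi_1$ from~\eqref{explicit} together with the identity~\eqref{implicit.bis} and Lemma~\ref{Lem.monotonicity}. Concretely, using $\Lambda(\beta) = -k^2$ with $k = k(1)$ solving $k I_{\mu+1}(k) + \beta I_\mu(k) = 0$, and the boundary-trace evaluation $\|\phi_1\|_{\sii(\partial B_1)}^2 / \|\phi_1\|_{\sii(B_1)}^2$, one can express $\beta\,\Lambda'(\beta) - 2\,\Lambda(\beta)$ as an integral of $\phi_1$ against a positive density, where the positivity comes precisely from the pointwise inequality $k r\,I_{\mu+1}(kr) + \beta\,I_\mu(kr) < 0$ on $(0,1)$ supplied by Lemma~\ref{Lem.monotonicity}. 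The main obstacle is this last reduction: turning the combination $\beta\,\|u_1\|^2_{L^2(\partial B_1)} + 2\,k^2\,\|u_1\|^2_{L^2(B_1)}$ into a manifestly positive quantity requires an integration by parts using the radial ODE~\eqref{elliptic}, which will generate the boundary term $\beta\,\phi_1(1)^2$ and a bulk term whose integrand is (up to positive factors) exactly the left-hand side of Lemma~\ref{Lem.monotonicity} evaluated along the eigenfunction; carrying out that integration by parts cleanly, and checking that no stray terms of the wrong sign survive, is the delicate computational heart of the argument. I would expect the scaling reformulation plus Lemma~\ref{Lem.monotonicity} to make it work, but the bookkeeping in the integration by parts is where care is needed.
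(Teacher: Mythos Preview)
Your scaling to the unit ball and the derivative computation are correct and coincide with the paper's route: the paper rescales $B_R$ to a fixed interval, obtains precisely your derivative expression (their formulae~\eqref{derivative} and~\eqref{derivative.bis}), and reduces the claim to showing
\[
  -\tfrac{2}{R}\,\lambda_R\,\|\phi_R\|^2 \;+\; \alpha\,R^{d-2}\,\phi_R(R)^2 \;>\; 0,
\]
which is your inequality $\beta\,\Lambda'(\beta) - 2\,\Lambda(\beta) > 0$ in different notation.

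The gap is in your closing step. You anticipate that Lemma~\ref{Lem.monotonicity} together with an integration by parts will produce a \emph{manifestly positive} integrand. It does not. What Lemma~\ref{Lem.monotonicity} actually buys (and this is how the paper uses it) is the lower bound~\eqref{explicit.bound},
\[
  \int_0^R \phi_R(r)^2\,r^{d-1}\,\der r \;>\; \frac{R^d}{2}\,\frac{\phi_R(R)^2}{1+\mu-\alpha R}\,.
\]
Feeding this into~\eqref{derivative.bis}, positivity of the derivative becomes equivalent to the spectral inequality
\[
  k^2 \;>\; \alpha^2 \;-\; (1+\mu)\,\frac{\alpha}{R}
  \;=\; \alpha^2 \;-\; \frac{d}{2}\,\frac{\alpha}{R}\,,
\]
and this is \emph{not} a consequence of Lemma~\ref{Lem.monotonicity}. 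The paper obtains it from the upper bound of Theorem~\ref{Thm.ballbounds}, $\lambda_R < -\alpha^2 + (d-1)\alpha/R$, which gives $k^2 > \alpha^2 - (d-1)\alpha/R \ge \alpha^2 - (d/2)\alpha/R$. The trivial estimate~\eqref{bound.trivial} would suffice only for $|\alpha R| < d/2$; beyond that range you genuinely need the sharper bound of Theorem~\ref{Thm.ballbounds}. So your outline is missing one of the two essential ingredients: the proof requires both the Bessel inequality of Lemma~\ref{Lem.monotonicity} \emph{and} the eigenvalue upper bound of Theorem~\ref{Thm.ballbounds}, and no amount of bookkeeping in the integration by parts will make the first alone yield a pointwise positive density.
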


We remark that a \emph{non-strict} monotonicity of
the first Robin eigenvalue for certain domains
(including balls) has been obtained in~\cite{Giorgi-Smits_2005}
(see also \cite{Giorgi-Smits_2007} and \cite{Giorgi-Smits_2008}).
Since our proof of Theorem~\ref{Thm.monotonicity} employs
different ideas and yields the \emph{strict} monotonicity,
we have decided to present it here.

For simplicity, throughout this subsection
we set $\lambda_R := \lambda_1^\alpha(B_{R})$.
We also write $\phi_R := \phi_1$ to stress the dependence
of the eigenfunction on the radius.

Our proof of Theorem~\ref{Thm.monotonicity} is based
on the following formula for the derivative of~$\lambda_R$ with respect to~$R$.
\begin{Lemma}
We have
\begin{equation}\label{derivative}
  \frac{\partial \lambda_R}{\partial R}
  = \frac{\displaystyle - \frac{2}{R} \int_0^R \phi_R'(r)^2 \,r^{d-1} \, \der r
  - \alpha \, R^{d-2} \, \phi_R(R)^2}
  {\displaystyle \int_0^R \phi_R(r)^2 \,r^{d-1} \, \der r}
  \,.
\end{equation}
\end{Lemma}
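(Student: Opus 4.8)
The plan is to derive~\eqref{derivative} by differentiating the Rayleigh quotient~\eqref{Rayleigh.ball} with respect to~$R$, using the fact that the infimum is attained by the smooth eigenfunction~$\phi_R$ and that~$\lambda_R$ is a simple eigenvalue (so the usual Hadamard-type formula applies). Concretely, I would start from the identity
\[
  \lambda_R \int_0^R \phi_R(r)^2\, r^{d-1}\,\der r
  = \int_0^R \phi_R'(r)^2\, r^{d-1}\,\der r + \alpha\, R^{d-1}\, \phi_R(R)^2,
\]
which is just the weak formulation of~\eqref{elliptic} tested against~$\phi_R$ itself, and differentiate both sides in~$R$. The terms involving $\partial\phi_R/\partial R$ will cancel because $\phi_R$ is a critical point of the quotient at fixed~$R$ (this is the point where simplicity and smoothness of the eigenfunction are used); what survives are the explicit boundary contributions coming from the $R$-dependence of the domain of integration and of the boundary term.

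The key steps, in order, are: (1) normalise $\int_0^R \phi_R(r)^2 r^{d-1}\,\der r = 1$ to simplify bookkeeping, or equivalently keep the denominator and divide at the end; (2) write $\lambda_R = N(R)/D(R)$ with $N,D$ the numerator and denominator of~\eqref{Rayleigh.ball} evaluated at $\phi_R$, and use $\partial_R \lambda_R = (\partial_R N - \lambda_R\, \partial_R D)/D$, where $\partial_R$ here is the \emph{total} derivative; (3) observe that the variation of $\phi_R$ contributes $2\int_0^R \phi_R' (\partial_R\phi_R)' r^{d-1}\,\der r + 2\alpha R^{d-1}\phi_R(R)(\partial_R\phi_R)(R) - 2\lambda_R \int_0^R \phi_R (\partial_R\phi_R) r^{d-1}\,\der r$, which vanishes by the weak form of~\eqref{elliptic}; (4) collect the remaining boundary-at-$r=R$ terms: from $\partial_R N$ one gets $\phi_R'(R)^2 R^{d-1} + \alpha\big[(d-1)R^{d-2}\phi_R(R)^2 + 2R^{d-1}\phi_R(R)\phi_R'(R)\big]$ and from $\lambda_R\,\partial_R D$ one gets $\lambda_R R^{d-1}\phi_R(R)^2$; (5) use the Robin condition $\phi_R'(R) = -\alpha\phi_R(R)$ to eliminate $\phi_R'(R)$, which turns $\phi_R'(R)^2 R^{d-1} + 2\alpha R^{d-1}\phi_R(R)\phi_R'(R) = \alpha^2 R^{d-1}\phi_R(R)^2 - 2\alpha^2 R^{d-1}\phi_R(R)^2 = -\alpha^2 R^{d-1}\phi_R(R)^2$; (6) handle the surviving bulk term $\int_0^R \phi_R'(r)^2 r^{d-1}\,\der r$ by an integration by parts using the ODE in~\eqref{elliptic} together with $\phi_R'(0)=0$, which yields $\int_0^R \phi_R'^2 r^{d-1} = -\alpha R^{d-1}\phi_R(R)^2 + \lambda_R \int_0^R \phi_R^2 r^{d-1}$; substituting this and simplifying should collapse everything to the claimed right-hand side, with the $-\tfrac{2}{R}\int_0^R \phi_R'^2 r^{d-1}$ term arising from rewriting $-\alpha^2 R^{d-1}\phi_R(R)^2$ via a scaling (Rellich-Pohozaev) identity for $\phi_R$.

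The main obstacle I anticipate is step~(6), or more precisely making rigorous the differentiability of $R\mapsto\phi_R$ (and of $\lambda_R$) so that the formal computation is justified: one needs $\phi_R$ to depend differentiably on~$R$ in a suitable norm, which follows from analytic perturbation theory since $\lambda_R$ is simple, but it is worth either invoking this cleanly or, alternatively, bypassing it by rescaling $B_R$ to the fixed domain $B_1$ (substituting $r = Rs$), where the $R$-dependence is transferred to the coefficients of the operator and the boundary parameter becomes $\alpha R$; then differentiation in~$R$ is elementary and the Feynman–Hellmann formula applies directly. I would likely adopt the rescaling route: set $\psi(s) := \phi_R(Rs)$ on $[0,1]$, note $\lambda_R = R^{-2}\,\Lambda(R)$ where $\Lambda(R)$ is the first eigenvalue of the $s$-problem with boundary parameter $\alpha R$, differentiate using Feynman–Hellmann (the derivative of $\Lambda$ in the parameter $\alpha R$ is the boundary mass $\int_{\partial B_1}\psi^2$), and then undo the scaling to land on~\eqref{derivative}. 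The only genuinely delicate bookkeeping is tracking the $R^{-2}$ prefactor and the chain rule through $\alpha R$, but no essential difficulty lies there.
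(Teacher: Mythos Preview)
Your rescaling route is exactly what the paper does: it pulls back to the fixed interval $(0,1)$ via $(U\phi)(\rho)=R^{d/2}\phi(R\rho)$, obtains the form $t_R[f]=R^{-2}\int_0^1 f'^2\rho^{d-1}\,\der\rho+\alpha R^{-1}f(1)^2$ on an $R$-independent space, invokes analytic perturbation theory (holomorphic family, simple eigenvalue) to justify differentiating the weak formulation, and reads off~\eqref{derivative} directly. Your parametrisation $\lambda_R=R^{-2}\Lambda(\alpha R)$ is the same rescaling, and after Feynman--Hellmann plus one substitution of the Rayleigh quotient you land on the stated formula.

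Your direct route (steps (1)--(6)) is a genuine alternative and does work, but your description of step~(6) is slightly tangled: after steps (4)--(5) there is \emph{no} surviving bulk term---you are left with the purely boundary expression $[-\alpha^2 R^{d-1}+\alpha(d-1)R^{d-2}-\lambda_R R^{d-1}]\phi_R(R)^2$, and the task is to \emph{introduce} $\int_0^R\phi_R'^2 r^{d-1}\,\der r$ rather than eliminate it. The Rellich--Pohozaev identity (multiply the ODE in~\eqref{elliptic} by $r\phi_R'$ and integrate) indeed furnishes exactly $\tfrac{2}{R}\int_0^R\phi_R'^2 r^{d-1}\,\der r = R^{d-1}\phi_R(R)^2\big[\alpha^2+\lambda_R-\tfrac{d\alpha}{R}\big]$, which closes the computation. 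So both routes are valid; the rescaling avoids the Pohozaev step entirely and is the cleaner choice, which is why the paper (and you, in the end) prefer it.
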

\begin{proof}
By the unitary transform $(U\phi)(\rho) := R^{d/2} \phi(R\rho)$,
we see that~$\lambda_R$ is the lowest eigenvalue of the operator~$T_R$
in the $R$-independent Hilbert space $\sii((0,1),\rho^{d-1} \der \rho)$
associated with the quadratic form
$$
\begin{aligned}
  t_R[f] &:= \frac{1}{R^2} \int_0^1 f'(\rho)^2 \,\rho^{d-1} \, \der\rho
  + \frac{1}{R} \, \alpha \, f(1)^2
  \,,
  \\
  \Dom(t_R) &:= W^{1,2}((0,1),\rho^{d-1} \der\rho)
  \,.
\end{aligned}
$$
Since~$T_R$ forms a holomorphic family in~$R$
(\cf~\cite[Thm.~VII.4.8]{Kato}) and~$\lambda_R$ is simple,
$\lambda_R$ and the associated eigenprojection
are holomorphic functions of~$R$.
In particular, the eigenfunction~$f_R$ of~$T_R$
associated with~$\lambda_R$
can be chosen to depend continuously on~$R$ in the topology of
$W^{1,2}((0,1),\rho^{d-1} \der\rho)$.
Now, the weak formulation of the eigenvalue problem for~$T_R$ reads
\begin{equation}\label{weak}
  t_R(\varphi,f_R) = \lambda_R \, (\varphi,f_R)
\end{equation}
for every $\varphi \in W^{1,2}((0,1),\rho^{d-1} \der\rho)$,
where $t_R(\cdot,\cdot)$ and $(\cdot,\cdot)$ denote
the sesqui\-linear form associated with $t_R[\cdot]$
and the inner product in $\sii((0,1),\rho^{d-1} \der\rho)$, respectively.
Differentiating~\eqref{weak} with respect to~$R$
(which is justified by the holomorphic properties of~$\lambda_R$ and~$f_R$)
and employing~\eqref{weak} in the resulting identity,
we conclude with
$$
  \frac{\partial \lambda_R}{\partial R}
  =
  \frac{\displaystyle - \frac{2}{R^3} \int_0^1 f_R'(r)^2 \,r^{d-1} \der r
  - \frac{1}{R^2} \, \alpha \, f_R(1)^2}
  {\displaystyle \int_0^1 f_R(r)^2 \,r^{d-1} \der r}
  \,.
$$
This formula coincides with~\eqref{derivative}
through the unitary identification~$U$.
\end{proof}

The derivative~\eqref{derivative} is clearly negative
whenever~$\alpha$ is positive.
At the same time, the derivative~\eqref{derivative}
is zero for the Neumann case $\alpha=0$.
If~$\alpha$ is negative, the numerator of~\eqref{derivative}
consists of a negative and a positive term,
so the sign of the derivative is not obvious in this case.
Our strategy to prove Theorem~\ref{Thm.monotonicity}
is to show that the derivative is in fact positive
whenever~$\alpha$ is negative.

By employing~\eqref{Rayleigh.ball},
where the infimum is actually achieved for~$\phi_R$,
we can rewrite~\eqref{derivative} as follows
\begin{equation}\label{derivative.bis}
  \frac{\partial \lambda_R}{\partial R}
  = \frac{\displaystyle - \frac{2}{R} \, \lambda_R \int_0^R \phi_R(r)^2 \,r^{d-1} \der r
  + \alpha \, R^{d-2} \, \phi_R(R)^2}
  {\displaystyle \int_0^R \phi_R(r)^2 \,r^{d-1} \der r}
  \,.
\end{equation}
Note that the sign of the numerator is still unclear
because $\lambda_R < 0$ whenever $\alpha < 0$, \cf~\eqref{bound.trivial}.
However, expression~\eqref{derivative.bis} is convenient
because of the following lemma.	
\begin{Lemma}
Let $\alpha < 0$. Then
\begin{equation}\label{explicit.bound}
  \int_{0}^{R} \phi_R(r)^2 \, r^{d-1} \, \der r
  > \frac{R^d}{2} \,
  \frac{\phi_R(R)^2}{1+\mu-\alpha R}
  \,.
\end{equation}
\end{Lemma}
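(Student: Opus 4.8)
The plan is to exploit the explicit form $\phi_R(r) = r^{-\mu} I_\mu(kr)$ from~\eqref{explicit}, together with the differential equation~\eqref{elliptic} satisfied by $\phi_R$, to convert the left-hand integral into a boundary term. First I would note that, since $-r^{-(d-1)}[r^{d-1}\phi_R'(r)]' = \lambda_R \phi_R(r) = -k^2\phi_R(r)$, multiplying by $r^{d-1}\phi_R(r)$ and integrating by parts over $(0,R)$ yields
\[
  \int_0^R \phi_R'(r)^2\, r^{d-1}\,\der r - R^{d-1}\phi_R(R)\phi_R'(R)
  = k^2 \int_0^R \phi_R(r)^2\, r^{d-1}\,\der r,
\]
where the boundary contribution at $r=0$ vanishes because $\phi_R'(0)=0$ and $\phi_R$ is bounded there. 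Using the Robin condition $\phi_R'(R) = -\alpha\phi_R(R)$, the boundary term becomes $+\alpha R^{d-1}\phi_R(R)^2$, which is negative for $\alpha<0$. Hence $k^2\int_0^R\phi_R^2 r^{d-1} = \int_0^R (\phi_R')^2 r^{d-1} + \alpha R^{d-1}\phi_R(R)^2$, and the task reduces to a lower bound on $\int_0^R(\phi_R')^2 r^{d-1}$ in terms of $\phi_R(R)^2$.

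The natural tool here is Lemma~\ref{Lem.monotonicity}, which I expect to be the crux of the argument. Writing $\phi_R(r) = r^{-\mu}I_\mu(kr)$, one computes $\phi_R'(r) = -\mu r^{-\mu-1}I_\mu(kr) + k r^{-\mu}I_\mu'(kr)$, and using the Bessel identity~\eqref{AS.identity} this simplifies to $\phi_R'(r) = k r^{-\mu} I_{\mu+1}(kr)$, which in particular shows $\phi_R$ is increasing. Now Lemma~\ref{Lem.monotonicity} gives $k r I_{\mu+1}(kr) + \alpha R I_\mu(kr) < 0$, i.e. $r\,\phi_R'(r) < -\alpha R\, r^{-\mu}I_\mu(kr) = -\alpha R\, \phi_R(r)$ after dividing by $r^\mu$; equivalently $\phi_R'(r) < -\dfrac{\alpha R}{r}\phi_R(r)$, but more usefully we want a bound the other way to control the integral from below. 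I would instead use the pointwise inequality obtained by combining $\phi_R' = k r^{-\mu}I_{\mu+1}(kr)$ with the elementary lower bound on $I_{\mu+1}/I_\mu$ that underlies Lemma~\ref{Lem.monotonicity}, or more directly observe that $\phi_R(R)^2 = \phi_R(r)^2 + \int_r^R (\phi_R^2)'\,\der s = \phi_R(r)^2 + 2\int_r^R \phi_R\phi_R'\,\der s$, and estimate. The cleanest route: integrate the identity $\frac{\der}{\der r}\big(r^{d-1}\phi_R(r)\phi_R'(r)\big) = (d-1)r^{d-2}\phi_R\phi_R' + r^{d-1}(\phi_R')^2 + r^{d-1}\phi_R\phi_R''$ and use $r^{d-1}\phi_R'' + (d-1)r^{d-2}\phi_R' = -k^2 r^{d-1}\phi_R$ to get $\frac{\der}{\der r}(r^{d-1}\phi_R\phi_R') = r^{d-1}(\phi_R')^2 - k^2 r^{d-1}\phi_R^2$, which just re-derives the integration by parts above and closes a loop.

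So the decisive step is genuinely Lemma~\ref{Lem.monotonicity}: from $k r I_{\mu+1}(kr) + \alpha R I_\mu(kr) < 0$ on $(0,R)$, multiply by the positive factor $k r^{-(2\mu+1)} I_\mu(kr)$ and recognize, via $\phi_R = r^{-\mu}I_\mu$ and $\phi_R' = kr^{-\mu}I_{\mu+1}(kr)$, that this reads $r\,\phi_R'(r)\phi_R(r) < -\alpha R\,\phi_R(r)^2$ — wait, this bounds $\phi_R\phi_R'$ from above, which is the wrong direction for a lower bound on the integral. The correct use is: since $\phi_R \phi_R' \ge 0$ and $r\phi_R\phi_R' < -\alpha R \phi_R^2$ gives $(\phi_R^2)'= 2\phi_R\phi_R' < -\dfrac{2\alpha R}{r}\phi_R^2$, whence by Grönwall $\phi_R(R)^2 < \phi_R(r)^2 (R/r)^{-2\alpha R}$ — still not immediately it. I would therefore take the honest path: using $\phi_R(R)^2 - \phi_R(r)^2 = 2\int_r^R \phi_R\phi_R' = 2\int_r^R s^{-(2\mu+1)} I_\mu(ks)\cdot k I_{\mu+1}(ks)\,\der s$, bound $k I_{\mu+1}(ks) < -\dfrac{\alpha R}{s}I_\mu(ks)$ by Lemma~\ref{Lem.monotonicity}, integrate the resulting expression in $\phi_R^2/s$, and finally use monotonicity of $\phi_R$ to replace $\phi_R(s)^2$ by $\phi_R(R)^2$ in the over-estimate, producing $\phi_R(R)^2 - \phi_R(r)^2 < (-\alpha R)\cdot(\text{something})\cdot\phi_R(R)^2$; rearranging and integrating once more in $r$ against $r^{d-1}$ should yield exactly the denominator $1+\mu-\alpha R$ in~\eqref{explicit.bound}. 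The main obstacle is choosing the estimates in the right order so that no slack is lost and the constant comes out to be precisely $2(1+\mu-\alpha R)/R^d$; everything else is integration by parts and Bessel identities already assembled in the excerpt. \hfill$\Box$
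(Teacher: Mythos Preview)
Your proposal is incomplete: it never lands on an argument that produces the constant $1+\mu-\alpha R$. The first paragraph is correct in spirit but, as you yourself note, only re-derives the Rayleigh-quotient identity and closes a loop (there is also a sign slip: the identity should read $R^{d-1}\phi_R(R)\phi_R'(R)-\int_0^R(\phi_R')^2 r^{d-1}=k^2\|\phi_R\|^2$). In the second half you correctly compute $\phi_R'=kr^{-\mu}I_{\mu+1}(kr)$ and correctly read off from Lemma~\ref{Lem.monotonicity} that $r\phi_R'<-\alpha R\,\phi_R$, but you then get stuck on the direction of the inequality. Your final sketch---bounding $\phi_R(R)^2-\phi_R(r)^2$ via $\phi_R\phi_R'<-\tfrac{\alpha R}{s}\phi_R^2$, replacing $\phi_R(s)$ by $\phi_R(R)$, and integrating against $r^{d-1}$---does not give the claimed denominator; if you carry it out you obtain $\|\phi_R\|^2>\tfrac{R^d(d+2\alpha R)}{d^2}\phi_R(R)^2$, which is weaker than~\eqref{explicit.bound} and even becomes vacuous once $|\alpha|R>d/2$.

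The missing idea is to integrate by parts on the norm integral \emph{itself}. Since $r^{d-1}\phi_R(r)^2=r\,I_\mu(kr)^2$, write
\[
  \|\phi_R\|^2=\int_0^R I_\mu(kr)^2\,r\,\der r
  =\frac{R^2}{2}I_\mu(kR)^2-k\int_0^R I_\mu(kr)I_\mu'(kr)\,r^2\,\der r .
\]
Now use the Bessel identity $I_\mu'=I_{\mu+1}+\tfrac{\mu}{z}I_\mu$ to split the last integral; the $\tfrac{\mu}{z}$-part is exactly $\mu\|\phi_R\|^2$ and moves to the left, giving
\[
  (1+\mu)\|\phi_R\|^2=\frac{R^2}{2}I_\mu(kR)^2-\int_0^R\big(kr\,I_{\mu+1}(kr)\big)I_\mu(kr)\,r\,\der r .
\]
This is where the sign works in your favour: Lemma~\ref{Lem.monotonicity} gives $kr\,I_{\mu+1}(kr)<-\alpha R\,I_\mu(kr)$, so the subtracted integral is less than $-\alpha R\|\phi_R\|^2$, whence $(1+\mu)\|\phi_R\|^2>\tfrac{R^2}{2}I_\mu(kR)^2+\alpha R\|\phi_R\|^2$. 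Rearranging and using $I_\mu(kR)^2=R^{d-2}\phi_R(R)^2$ yields~\eqref{explicit.bound}. The point you kept circling around is that the minus sign generated by this particular integration by parts flips the ``wrong-direction'' inequality into the right one.
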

\begin{proof}
Using~\eqref{explicit} and integrating by parts, we have
$$
\begin{aligned}
  \|\phi\|^2 := \int_{0}^{R} \phi_R(r)^2 \, r^{d-1} \, \der r
  &= \int_{0}^{R} I_\mu(kr)^2 \, r \, \der r
  \\
  &= I_\mu(kR)^2 \frac{R^2}{2}
  - k \int_{0}^{R} I_\mu(kr) \, I_\mu'(kr) \, r^2 \, \der r
  \,.
\end{aligned}
$$
Using the identity~\eqref{AS.identity},
we obtain
$$
  (1+\mu) \|\phi\|^2
  = I_\mu(kR)^2 \frac{R^2}{2}
  - k \int_{0}^{R} I_\mu(kr) \, I_{\mu+1}(kr) \, r^2 \, \der r
  \,.
$$
By Lemma~\ref{Lem.monotonicity}, it follows
$$
  (1+\mu) \|\phi\|^2
  > I_\mu(kR)^2 \frac{R^2}{2}
  + \alpha R \, \|\phi\|^2
  = \phi_R(R)^2 \frac{R^d}{2}
  + \alpha R \, \|\phi\|^2
  \,,
$$
which gives the desired claim.
\end{proof}

Assuming now that~$\alpha$ is negative
and using~\eqref{explicit.bound} in~\eqref{derivative.bis},
we thus obtain
\begin{equation}
  \frac{\partial \lambda_R}{\partial R}
  > \frac{\displaystyle
  R^{d-1} \, \phi_R(R)^2
  \left[-\lambda_R - \alpha^2 + (1+\mu) \;\! \frac{\alpha}{R} \right]
  }
  {\displaystyle (1+\mu-\alpha R) \int_0^R \phi_R(r)^2 \,r \, \der r}
  \,.
\end{equation}
Here the right-hand side is positive because
$$
  -\lambda_R - \alpha^2 + (1+\mu) \;\! \frac{\alpha}{R}
  > -\mu \;\! \frac{\alpha}{R}
  \geq 0
  \,,
$$
where the first inequality employs the upper bound of Theorem~\ref{Thm.ballbounds}
and the dependence of~$\mu$ on~$d$, \cf~\eqref{explicit}.
This concludes the proof of Theorem~\ref{Thm.monotonicity}.

\subsection{Bounds for \texorpdfstring{$\lambda_1^\alpha(A_{R_1,R_2})$}{ev1}}
In this subsection, we establish Theorem~\ref{Thm.shell}
dealing with $d$-dimensional spherical shells.
\begin{proof}[Proof of Theorem~\ref{Thm.shell}]
Now we have the variational characterisation
\begin{equation}\label{Rayleigh.shell}
  \lambda_1^\alpha(A_{R_1,R_2})
  = \inf_{\stackrel[\phi\not=0]{}{\phi \in C^1([R_1,R_2])}}
  \frac{\displaystyle \int_{R_1}^{R_2}|\phi'(r)|^2 \, r^{d-1} \, \der r 
  + \alpha \, R_1^{d-1} \, |\phi(R_1)|^2
  + \alpha \, R_2^{d-1} \, |\phi(R_2)|^2}
  {\displaystyle \int_{R_1}^{R_2} |\phi(r)|^2 \, r^{d-1} \, \der r}
  \,.
\end{equation}

To prove the lower bound, 
let~$\phi$ be a positive minimiser of~\eqref{Rayleigh.shell}.
Given any Lipschitz-continuous function $\eta:[R_1,R_2]\to\Real$ 
such that
\begin{equation*}
  \eta(R_2)=1
  \qquad\mbox{and}\qquad
  \eta(R_1) = -1
  \,,
\end{equation*}
we write
\begin{eqnarray*}
  \lefteqn{
  R_1^{d-1} \, \phi(R_1)^2 + R_2^{d-1} \, \phi(R_2)^2
  = \int_{R_1}^{R_2} [r^{d-1} \phi(r)^2 \eta(r)]' \,\der r
  }
  \\
  &&= \int_{R_1}^{R_2} \left[
  2 \phi(r) \phi'(r) \eta(r) + \phi(r)^2 \eta'(r) + (d-1) \phi(r)^2 \frac{\eta(r)}{r}
  \right] r^{d-1}\,\der r
  \,.
\end{eqnarray*}
Denoting by $Q[\phi]$ the numerator 
of the right-hand side of~\eqref{Rayleigh.shell},
we therefore have
\begin{equation}\label{strict}
\begin{aligned}
  Q[\phi] 
  &= \int_0^R 
  \left\{
  \big[\phi'(r) +\alpha \eta(r)\phi(r)\big]^2
  - \alpha^2 \eta(r)^2 + \alpha \eta'(r) + \alpha (d-1) \frac{\eta(r)}{r}
  \right\} \phi(r)^2 \, r^{d-1}\,\der r
  \\
  &\geq 
  \int_0^R 
  \left\{
  - \alpha^2 \eta(r)^2 + \alpha \eta'(r) + \alpha (d-1) \frac{\eta(r)}{r}
  \right\} \phi(r)^2 \, r^{d-1}\,\der r
  \\
  &\geq 
  \mu \int_0^R \phi(r)^2 \, r^{d-1}\,\der r
  \,, 
\end{aligned}
\end{equation}
where
\begin{equation}\label{mu}
  \mu := \sup_\eta \inf_{r \in (R_1,R_2)} f_\eta(r)
\end{equation}
with
$$
  f_\eta(r):=- \alpha^2 \eta(r)^2 + \alpha \eta'(r) + \alpha (d-1) \frac{\eta(r)}{r}
$$
We remark that the second inequality in~\eqref{strict} is strict
provided that the function~$f_\eta$ is not constant. 
In this case, we deduce from~\eqref{strict}
\begin{equation}\label{deduce}
  \lambda_1^\alpha(A_{R_1,R_2}) > \mu 
  \,.
\end{equation}
The function~$\eta$ can be understood as a sort of test function.

Choosing now
\begin{equation}\label{special}
  \eta(r) := \frac{2r - (R_1+R_2)}{R_2-R_1}
  \,,
\end{equation}
we obtain
\begin{equation*}
  f_{\eta}(r) = -\alpha^2 \left(\frac{2r-(R_1+R_2)}{R_2-R_1}\right)^2
  + \frac{2\alpha}{R_2-R_1}
  + \frac{\alpha(d-1)}{R_2-R_1} \left(2-\frac{R_1+R_2}{r}\right) 
  \,.
\end{equation*}
The function $r \mapsto f_{\eta}(r)$ 
is clearly non-constant and we claim that it is minimised at $r = R_2$.
To see the latter, we write
\begin{eqnarray*}
  \lefteqn{
  f_{\eta}(r) - f_\eta(R_2) 
  }
  \\
  &&= -\alpha^2 \left[\left(\frac{2r-(R_1+R_2)}{R_2-R_1}\right)^2-1\right]
  + \alpha(d-1)
  \left[\frac{2r-(R_1+R_2)}{(R_2-R_1) r}- \frac{1}{R_2}\right]
  \\
  &&= -\alpha^2 \frac{4(r-R_2)(r-R_1)}{(R_2-R_1)^2}
  + \alpha(d-1)
  \frac{(R_1+R_2)(r-R_2)}{R_2(R_2-R_1) r}
  \geq 0
  \,,
\end{eqnarray*}
where the last two terms are individually non-negative 
because $r \in [R_1,R_2]$ and~$\alpha$ is negative.
Consequently, with the special choice~\eqref{special},
\begin{equation}
  \mu = f_\eta(R_2) 
  = - \alpha^2 + \frac{2\alpha}{R_2-R_1} 
  + \frac{\alpha(d-1)}{R_2} 
  \,.
\end{equation}
This proves the lower bound of Theorem~\ref{Thm.shell} 
due to~\eqref{deduce}.

We now turn to the upper bound. 
In the variational characterisation~\eqref{Rayleigh.shell},
we again use the test function~\eqref{test} 
with~$R$ being replaced by~$R_2$ now.
It leads to the upper bound
\begin{equation} 
  \lambda_1^\alpha(A_{R_1,R_2})
  \leq \alpha^2 
  - \frac{\displaystyle 2\alpha^2 
  \left[1+\left(\frac{R_1}{R_2}\right)^{d-1} e^{2\alpha(R_2-R_1)} \right]}
  {\displaystyle \Gamma_{d}(2\alpha R_2,2\alpha R_1)}
\end{equation}
with
\begin{equation} 
  \Gamma_{d}(x,y) := \int_x^y \left(\frac{t}{x}\right)^{d-1} e^{-t+x} \, \der t
  \,.
\end{equation}
Since 
$
  0
  < \Gamma_{d}(2\alpha R_2,2\alpha R_1)
  < \Gamma_{d}(2\alpha R_2,0)
  \equiv \Gamma_{d}(2\alpha R_2)
$,
where~$\Gamma_{d}(2\alpha R_2)$ is defined in~\eqref{Gamma},
we have
\begin{equation} 
  \lambda_1^\alpha(A_{R_1,R_2})
  < \alpha^2 
  - \frac{\displaystyle 2\alpha^2 
  \left[1+\left(\frac{R_1}{R_2}\right)^{d-1} e^{2\alpha(R_2-R_1)} \right]}
  {\Gamma_{d}(2\alpha R_2)}
  < \alpha^2 
  - \frac{2\alpha^2}
  {\displaystyle \Gamma_{d}(2\alpha R_2)}
  \,.
\end{equation}
Here the last bound coincides with the upper bound~\eqref{pre.Gamma} for balls.
Using~\eqref{crucial}, we thus obtain the upper bound of Theorem~\ref{Thm.shell}.
\end{proof}
\begin{Remark}
Adapting the idea of the lower-part proof above to balls,
we obtain $\lambda_1^\alpha(B_R) > \mu$, 
where~$\mu$ is defined as in~\eqref{mu}
with $R_1 := 0$ and $R_2 := R$  
and $\eta:[0,R]\to\Real$ is any Lipschitz-continuous (test) function
such that 
\begin{equation*}
  \eta(R)=1
  \qquad\mbox{and}\qquad
  r^{d-1} \eta(r) \big|_{r=0} = 0
  \,.
\end{equation*}
Taking $\eta(r) := r/R$, we then arrive at the lower bound
\begin{equation}\label{bound.ball}
  - \alpha^2 + \alpha \, \frac{d}{R} 
  < \lambda_1^\alpha(B_R)
  \,.
\end{equation}
This is not as good as the lower bound of Theorem~\ref{Thm.ballbounds},
but, on the other hand, it is much simpler.
\end{Remark}

\subsection{The Robin problem in dimension \texorpdfstring{$d=1$}{1D}}
Finally, let us make a few comments on problem~\eqref{eq:robin}
when $\Omega$ is an interval. While t}he one-dimensional situation 
is formally excluded from this paper,
our main results still hold in that case. 
In fact, the case of $d=1$ is simpler in the sense 
that it can be reduced to a single transcendental equation.

To be more specific, without loss of generality,
let us assume now that~$\Omega$ is a one-dimensional ball
of radius $R>0$ centred at the origin, 
\ie\ $\Omega := B_R = (-R,R)$.
From~\eqref{bound.trivial} we know that 
$\lambda_1^\alpha(\Omega)$ is negative as long as $\alpha < 0$.
By solving the differential equation in~\eqref{eq:robin}
in terms of exponentials, subjecting the general solution
to the boundary conditions at~$\pm R$
and employing the fact that the corresponding eigenfunction 
cannot change sign, 
we obtain that
$\lambda_1^\alpha((-R,R)) = -k^2$, 
where~$k$ is the smallest positive solution of
\begin{equation}\label{trans}
  - \frac{k}{\alpha} = \coth(R k)
  \,.
\end{equation}

First of all, we study the asymptotic regime
when the boundary parameter goes to minus infinity.
\begin{Proposition}\label{Prop.as.1D}
We have the asymptotics
\begin{equation}\label{as.1D}
  \lambda_1^\alpha((-R,R))
  = - \alpha^2 - 4 \alpha^2 e^{2 R \alpha} + o(\alpha^2 e^{2 R \alpha})  
  \,, \qquad 
  \alpha \to -\infty
  \,.
\end{equation}
\end{Proposition}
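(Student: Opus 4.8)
The plan is to extract the asymptotics directly from the transcendental equation~\eqref{trans}. Writing $\lambda_1^\alpha((-R,R)) = -k^2$ with $k>0$ the smallest positive root of $-k/\alpha = \coth(Rk)$, I first observe that as $\alpha \to -\infty$ the right-hand side of~\eqref{trans} must blow up (since $-k/\alpha \to \infty$ would force $k \sim -\alpha$), so $k$ tends to a finite positive limit or to $-\alpha$; a quick check rules out a finite limit, so $k = -\alpha(1 + o(1))$. More precisely, set $k = -\alpha(1+\delta)$, so the equation becomes $1+\delta = \coth\!\big(-R\alpha(1+\delta)\big)$, i.e. $\delta = \coth\!\big(-R\alpha(1+\delta)\big) - 1$. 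Since $\coth(x) - 1 = \frac{2e^{-2x}}{1-e^{-2x}} = 2e^{-2x} + O(e^{-4x})$ as $x \to +\infty$, and $-R\alpha(1+\delta) \to +\infty$, we get $\delta = 2 e^{2R\alpha(1+\delta)}(1 + o(1))$, which is exponentially small; feeding this back shows $\delta = 2 e^{2R\alpha}(1 + o(1))$ to leading order, with the correction from the $\delta$ inside the exponent being of order $e^{2R\alpha} \cdot \delta = O(e^{4R\alpha})$ and hence negligible.

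Next I would translate the expansion of $k$ into one for $k^2$. From $k = -\alpha(1+\delta)$ with $\delta = 2e^{2R\alpha} + o(e^{2R\alpha})$ we get
\[
  k^2 = \alpha^2 (1+\delta)^2 = \alpha^2 \big(1 + 2\delta + \delta^2\big)
  = \alpha^2 + 4\alpha^2 e^{2R\alpha} + o(\alpha^2 e^{2R\alpha})
  \,,
\]
where $\alpha^2\delta^2 = O(\alpha^2 e^{4R\alpha})$ is absorbed into the error term. Therefore $\lambda_1^\alpha((-R,R)) = -k^2 = -\alpha^2 - 4\alpha^2 e^{2R\alpha} + o(\alpha^2 e^{2R\alpha})$, which is exactly~\eqref{as.1D}.

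The one genuinely non-routine point is making the bootstrap rigorous rather than merely formal: I need to justify that the smallest positive root behaves as claimed, i.e. that no spurious small-$k$ branch interferes and that the fixed-point iteration $\delta \mapsto \coth(-R\alpha(1+\delta)) - 1$ is a contraction on an appropriate small interval $[0, C e^{2R\alpha}]$ for $\alpha$ sufficiently negative. This follows because $\coth$ is smooth and strictly decreasing with derivative exponentially small at $+\infty$, so the map sends such an interval into itself and has Lipschitz constant $o(1)$; a standard contraction/monotonicity argument then pins down $\delta$ and its leading asymptotics. The remaining manipulations — expanding $\coth(x)-1$, substituting, squaring $1+\delta$ — are elementary. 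I would also note in passing that~\eqref{trans} itself, hence the whole argument, makes sense only because the ground-state eigenfunction does not change sign, which forces $k$ to be the \emph{first} positive root; this is already recorded in the text preceding the proposition.
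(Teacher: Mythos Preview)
Your proof is correct and follows essentially the same route as the paper: both set $k=-\alpha(1+\delta)$ (the paper writes this as $c:=k+\alpha$, so $c=-\alpha\delta$), subtract~$1$ from both sides of~\eqref{trans}, use $\coth(x)-1=\frac{2e^{-2x}}{1-e^{-2x}}$, and bootstrap to extract $\delta=2e^{2R\alpha}+o(e^{2R\alpha})$ before squaring. Your opening sentence about the right-hand side ``blowing up'' is garbled (it tends to~$1$, not to~$\infty$), but the intended dichotomy---$k$ cannot stay bounded since then $-k/\alpha\to 0<1\le\coth(Rk)$---is clear and the rest is fine.
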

\begin{proof}
It follows from~\eqref{bound.trivial} that $k \to +\infty$ 
as $\alpha \to -\infty$.  
Since the right-hand side of~\eqref{trans} tends to~$1$ as $k \to +\infty$,
we immediately obtain that $k = -\alpha + o(\alpha)$ as $\alpha \to -\infty$.
Let us put $c := k+\alpha$.
Subtracting~$1$ from both sides of~\eqref{trans},
we now rewrite~\eqref{trans} as follows
$$
  - \frac{c}{\alpha} 
  = \frac{2 e^{-2 R k}}{1 - e^{-2 R k}}
  \,.
$$
Since $c/\alpha = o(1)$ as $\alpha \to -\infty$,
the identity asymptotically behaves as 
$$
  - \frac{c}{\alpha}  
  = 2 \, e^{2R\alpha[1+o(1)]} + O\big(e^{4R\alpha[1+o(1)]}\big)
  \,, \qquad 
  \alpha \to -\infty
  \,.
$$
Multiplying this asymptotic identity by $e^{-2R\alpha[1+o(1)]}$
and taking the limit $\alpha \to -\infty$, we conclude with
\begin{equation*}
  c = - 2 \alpha e^{2R\alpha} + o(\alpha e^{2R\alpha})  
  \,, \qquad 
  \alpha \to -\infty
  \,,
\end{equation*}
which completes the proof of the proposition.
\end{proof}

The proposition represents an improvement upon~\eqref{as.ball} when $d=1$.
Let us emphasise that the asymptotics does not enable one to extend
the disproval of Bareket's conjecture from~\cite{FK7}
to the one-dimensional situation.
When $d=1$, the analogue of the annulus $A_{R_1,R_2}$ 
of positive radii $R_1 < R_2$
is the disconnected set $(-R_2,-R_1) \cup (R_1,R_2)$.
Since, 
$
  \lambda_1^\alpha(A_{R_1,R_2}) 
  = \lambda_1^\alpha((R_1,R_2))
  = \lambda_1^\alpha(-R',R')
$ 
with $R' := (R_2-R_1)/2$ and the one-dimensional ball~$B_R$
of radius~$R$ such that $|B_R| = |A_{R_1,R_2}|$ 
satisfy $R = R_2-R_1 > R'$, it actually follows 
from Proposition~\ref{Prop.as.1D} that 
\begin{equation}\label{Bareket.1D} 
  \lambda_1^\alpha(A_{R_1,R_2}) < \lambda_1^\alpha(B_R)
\end{equation}
for all sufficiently large negative~$\alpha$.
As a matter of fact, assuming the volume constraint $|B_R| = |A_{R_1,R_2}|$,
inequality~\eqref{Bareket.1D} does hold for \emph{all} negative~$\alpha$;
this follows from the following monotonicity result
(\cf~Theorem~\ref{Thm.monotonicity}).
\begin{Proposition}\label{Thm.monotonicity.1D}
If $\alpha < 0$, then
$$
  R \mapsto \lambda_1^\alpha((-R,R))
  \quad \mbox{is strictly increasing.}
$$
\end{Proposition}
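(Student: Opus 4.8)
\textbf{Proof proposal for Proposition~\ref{Thm.monotonicity.1D}.}
The plan is to mimic the strategy used for Theorem~\ref{Thm.monotonicity}, but exploiting the fact that in dimension one everything can be made fully explicit through the transcendental equation~\eqref{trans}. First I would record that $\lambda_1^\alpha((-R,R)) = -k(R)^2$, where $k = k(R)$ is the smallest positive solution of $-k/\alpha = \coth(Rk)$, and that the corresponding (even) eigenfunction is $\phi_R(x) = \cosh(kx)$ up to normalisation. Rather than differentiating the implicit relation directly (which is certainly possible and perhaps the quickest route), I would set up the Hadamard-type derivative formula exactly as in the proof of the Lemma preceding Theorem~\ref{Thm.monotonicity}: after the unitary rescaling $x = R\rho$ the form becomes $t_R[f] = R^{-2}\int_{-1}^1 f'(\rho)^2\,\der\rho + R^{-1}\alpha\big(f(1)^2+f(-1)^2\big)$ on the $R$-independent space $L^2((-1,1))$, this is a holomorphic family of type (B) in $R$, $\lambda_R$ is simple, and differentiating the weak eigenvalue identity and folding~\eqref{weak} back in yields
\begin{equation*}
  \frac{\partial\lambda_R}{\partial R}
  = \frac{\displaystyle -\frac{2}{R}\int_{-R}^{R}\phi_R'(x)^2\,\der x
   - 2\alpha\,\phi_R(R)^2}
  {\displaystyle \int_{-R}^{R}\phi_R(x)^2\,\der x}
  \,,
\end{equation*}
the one-dimensional analogue of~\eqref{derivative} (with $R^{d-2}=R^{-1}$ at $d=1$ replaced here by the boundary term from the two endpoints $\pm R$, giving the factor $2$).

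The numerator again has a negative first term and, since $\alpha<0$, a positive second term, so the crux is to show the positive term dominates. Here the one-dimensional setting is forgiving: with $\phi_R(x)=\cosh(kx)$ one computes $\int_{-R}^R\phi_R'^2 = k^2\int_{-R}^R\sinh^2(kx)\,\der x$ and $\int_{-R}^R\phi_R^2 = \int_{-R}^R\cosh^2(kx)\,\der x$, both elementary, and $\phi_R(R)^2 = \cosh^2(kR)$. Using $\int_{-R}^R\sinh^2(kx)\,\der x = \frac{\sinh(2kR)}{2k} - R$ and $\int_{-R}^R\cosh^2(kx)\,\der x = \frac{\sinh(2kR)}{2k} + R$, the numerator of $\partial\lambda_R/\partial R$ becomes a closed expression in $kR$ which, after multiplying through and invoking the defining relation $k = -\alpha\coth(kR)$ to eliminate $\alpha$, reduces to showing an inequality purely in the single variable $s := kR > 0$. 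I expect it to boil down to something like $2k\cosh^2 s \cdot \coth s > \frac{2}{R}\big(k^2\,(\tfrac{\sinh 2s}{2k}-R)\big)$ after substitution, i.e. a one-variable hyperbolic inequality such as $\sinh(2s) > 2s$ (true for all $s>0$, with equality only at $s=0$) together with elementary positivity, and strictness on $(0,\infty)$ then gives strict monotonicity.

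Alternatively — and this may be cleaner to write up — I would simply differentiate $-k/\alpha = \coth(Rk)$ implicitly: writing $g(s) := s\coth s$ one gets, from $k/(-\alpha) = \coth(Rk)$, that $R$ is a strictly decreasing function of $k$ would be the wrong direction, so one solves for $\der k/\der R$ and checks its sign. Concretely, differentiating gives $-\frac{1}{\alpha}\frac{\der k}{\der R} = -\operatorname{csch}^2(Rk)\,\big(k + R\frac{\der k}{\der R}\big)$, which one solves for $\der k/\der R$; using $-1/\alpha = \coth(Rk)/k$ this becomes an explicit rational expression in $Rk$ whose positivity is again a one-line hyperbolic estimate (the denominator $\sinh^2(Rk) + Rk\coth(Rk) - Rk$ — or similar — being manifestly positive for $Rk>0$). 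Since $\lambda_R = -k^2$ and $k>0$, $\der k/\der R>0$ is exactly strict monotonicity of $\lambda_R$. The main (and only real) obstacle is bookkeeping: making sure the implicit-differentiation constants are tracked correctly and that the resulting one-variable inequality is genuinely elementary; I anticipate no conceptual difficulty, the whole point being that $d=1$ collapses the Bessel-function machinery of Section~\ref{Sec.monotonicity} to hyperbolic functions.
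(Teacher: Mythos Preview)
Your ``alternative'' route---implicit differentiation of the transcendental equation~\eqref{trans}---is exactly what the paper does, and it is the cleanest argument. The paper writes $f(k,R):=k+\alpha\coth(Rk)$, checks that both $\partial f/\partial k=(\sinh^2(Rk)-R\alpha)/\sinh^2(Rk)$ and $\partial f/\partial R=-k\alpha/\sinh^2(Rk)$ are strictly positive (immediate since $\alpha<0$), and concludes $k'(R)=-f_R/f_k<0$; then $\partial\lambda_R/\partial R=-2k\,k'(R)>0$. Note that you have a sign slip at the very end: since $\lambda_R=-k^2$ with $k>0$, strict \emph{increase} of $\lambda_R$ is equivalent to $k'(R)<0$, not $k'(R)>0$. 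Your implicit differentiation would in fact yield $k'=\alpha k/(\sinh^2(Rk)-\alpha R)<0$, so the two sign errors cancel, but the write-up should be corrected.

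Your first route via the Hadamard-type formula is a genuinely different path: the paper explicitly remarks that ``the proof of Theorem~\ref{Thm.monotonicity} does not seem to entirely extend to $d=1$'' and opts for the direct implicit-function argument instead. Your approach does go through, but your formula is off: the boundary term in $d=1$ carries a factor $1/R$ (trace back the $R^{-2}$ in $\partial t_R/\partial R$), so the correct one-dimensional analogue of~\eqref{derivative} is
\[
\frac{\partial\lambda_R}{\partial R}
=\frac{-\dfrac{2}{R}\displaystyle\int_{-R}^{R}\phi_R'(x)^2\,\der x-\dfrac{2\alpha}{R}\,\phi_R(R)^2}{\displaystyle\int_{-R}^{R}\phi_R(x)^2\,\der x}\,.
\]
With this correction and $\phi_R(x)=\cosh(kx)$, $-\alpha=k\tanh(kR)$, the positivity of the numerator reduces not to $\sinh(2s)>2s$ but simply to $k^2R>0$---so the Hadamard route is in fact trivial here, just more bookkeeping than the paper's two-line implicit differentiation.
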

\begin{proof}
The proof of Theorem~\ref{Thm.monotonicity} does not seem 
to entirely extend to $d=1$.
Anyway, in the present one-dimensional situation,
the monotonicity result can be deduced directly from~\eqref{trans}.
Let us set
$$
  f(k,R) := k + \alpha \, \coth(R k)
  \,.
$$
Computing the partial derivatives,
\begin{align*}
  \frac{\partial f}{\partial k}(k,R)
  &= \frac{\sinh^2(Rk)-R\alpha}{\sinh^2(Rk)}
  > \frac{R^2 k^2-R\alpha}{\sinh^2(Rk)}
  > 0
  \,,
  \\
  \frac{\partial f}{\partial R}(k,R)
  &= \frac{-k \alpha}{\sinh^2(Rk)}
  > 0 \,,
\end{align*}
we conclude from the implicit-function theorem that 
the derivative of~$k=k(R)$ with respect to~$R$ is negative.
Consequently,
$$ 
  \frac{\partial \lambda_1^\alpha((-R,R)) } {\partial R} 
  = - k(R) \, k'(R)
  > 0
  \,,
$$
which completes the proof of the proposition.
\end{proof}

We complement Proposition~\ref{Thm.monotonicity.1D}
by the asymptotic behaviour of the first Robin eigenvalue
in shrinking and expanding intervals.
\begin{Proposition}\label{Prop.shrink}
If $\alpha < 0$, then
\begin{align*}
  \lambda_1^\alpha((-R,R)) &= \frac{\alpha}{R} + o(R^{-1})
  \,, && R \to 0 \,,  
  \\
  \lambda_1^\alpha((-R,R)) &= -\alpha^2 + o(1)
  \,, && R \to \infty \,.
\end{align*}
\end{Proposition}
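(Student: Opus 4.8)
The plan is to exploit the transcendental equation~\eqref{trans}, namely $-k/\alpha = \coth(Rk)$, which characterises $\lambda_1^\alpha((-R,R)) = -k^2$ through the smallest positive root $k = k(R)$. For the shrinking case $R \to 0$, I would argue that $Rk \to 0$: indeed, from~\eqref{bound.trivial} we have $\lambda_1^\alpha((-R,R)) \le \alpha/R$, so $k^2 \ge -\alpha/R$, hence $k \to +\infty$; but also the trivial lower bound~\eqref{bound.ball} (with $d=1$) gives $\lambda_1^\alpha((-R,R)) > -\alpha^2 + \alpha/R$, so $k^2 < \alpha^2 - \alpha/R$, and combining these, $Rk$ stays bounded and in fact $R^2k^2 \le -\alpha R + O(R) \to 0$. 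Then $\coth(Rk) = 1/(Rk) + O(Rk)$ as $Rk \to 0$, so~\eqref{trans} becomes $-k/\alpha = 1/(Rk) + o(1)$, i.e. $-Rk^2/\alpha = 1 + o(Rk \cdot k) $; rearranging, $k^2 = -\alpha/R \,(1 + o(1))$, which is exactly $\lambda_1^\alpha((-R,R)) = \alpha/R + o(R^{-1})$.

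For the expanding case $R \to \infty$, the argument already appears in the proof of Proposition~\ref{Prop.as.1D}: since the right-hand side of~\eqref{trans} tends to $1$ as its argument tends to $+\infty$, and since~\eqref{bound.trivial} forces $k \to +\infty$ as $\alpha$ is fixed and $R \to \infty$ (because $k^2 \ge -\alpha/R$ is too weak here — instead use that $k(R)$ is increasing in view of $\lambda_1^\alpha((-R,R))$ being increasing, Proposition~\ref{Thm.monotonicity.1D}, together with the two-sided bounds $-\alpha^2 + \alpha/R < \lambda_1^\alpha((-R,R)) < 0$), one gets $Rk \to \infty$, hence $\coth(Rk) \to 1$, hence $k \to -\alpha$, giving $\lambda_1^\alpha((-R,R)) = -k^2 = -\alpha^2 + o(1)$. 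In fact Proposition~\ref{Prop.as.1D}'s refinement shows the $o(1)$ term is exponentially small, but for the stated result the crude limit suffices.

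The main obstacle — really the only delicate point — is pinning down the scaling $R^2 k^2 \to 0$ in the shrinking regime, so that the expansion $\coth(Rk) \sim 1/(Rk)$ is legitimate; this requires combining an upper bound on $k$ (from the trivial bound~\eqref{bound.trivial}, which gives $k^2 \le -\alpha/R$, hence $R^2k^2 \le -\alpha R \to 0$) with the positivity of $k$. Once that two-sided control on $Rk$ is in hand, both asymptotics reduce to substituting the leading behaviour of $\coth$ into~\eqref{trans} and solving for $k^2$, which is routine. One could alternatively bypass~\eqref{trans} entirely and derive the $R \to 0$ asymptotics variationally: the upper bound $\alpha/R$ is immediate from~\eqref{bound.trivial}, and a matching lower bound follows by testing the quadratic form against the minimiser and using that $\|u'\|^2 \ge 0$ together with a trace estimate $|u(\pm R)|^2 \le C R^{-1}\|u\|^2 + CR\|u'\|^2$ on the interval; this gives $\lambda_1^\alpha \ge \alpha C R^{-1} - o(R^{-1})$ with the constant tending to $1$, but the transcendental-equation route is cleaner and I would present that.
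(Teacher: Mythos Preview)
Your approach is the same as the paper's --- work directly with the transcendental equation~\eqref{trans} and analyse the behaviour of $Rk$ in each regime --- and the overall shape is correct, but there are a few slips worth fixing.

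For $R \to 0$ you work harder than necessary: once you know $k \to +\infty$ from~\eqref{bound.trivial}, the equation~\eqref{trans} itself gives $\coth(Rk) = -k/\alpha \to +\infty$, which forces $Rk \to 0^+$ since $\coth$ blows up only at the origin. There is no need to invoke~\eqref{bound.ball}; the paper takes this shorter route. Also, in your final paragraph you reverse the inequality: \eqref{bound.trivial} gives $k^2 \ge -\alpha/R$, not $\le$, so it cannot furnish the upper bound on $Rk$ you claim there; your first paragraph had this the right way round.

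For $R \to \infty$ there are two slips. First, since $\lambda_1 = -k^2$ is strictly increasing in $R$ (Proposition~\ref{Thm.monotonicity.1D}), $k$ is \emph{decreasing}, not increasing. Second, and more substantively, monotonicity together with your two-sided bounds only yields $k \to k_\infty \in [0,\infty)$; you still need to exclude $k_\infty = 0$ before concluding $Rk \to \infty$. The paper does this in one line: the range of $x \mapsto \coth(x)$ on $(0,\infty)$ is $(1,\infty)$, so~\eqref{trans} gives $k > -\alpha$ for every $R$, whence $k_\infty \ge -\alpha > 0$. With that in hand, $Rk \to \infty$, $\coth(Rk) \to 1$, and $k \to -\alpha$ follow as you say.
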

\begin{proof}
It follows from~\eqref{bound.trivial} that $k \to +\infty$ as $R \to 0$.  
From~\eqref{trans} we then immediately obtain 
that necessarily $k R \to 0$ as $R \to 0$.
Using the Taylor expansion of $\tanh(x)$ at $x=0$,
we then conclude from~\eqref{trans} that the asymptotic behaviour 
$$
  k^2 R \, [1 + O(kR)] = -\alpha^2
  \,, \qquad
  R \to 0 
$$ 
holds.
Taking the limit $R \to 0$ we obtain the first asymptotic 
expansion of the proposition.

As for the second asymptotic expansion, 
we first note that~$k$ converges as $R \to \infty$ 
due to Proposition~\ref{Thm.monotonicity.1D}.
Since the range of the function $x \mapsto \coth(x)$
does contain zero, the possibility of the limit being zero is excluded
by~\eqref{trans}.
Consequently, $kR \to \infty$ as $R \to \infty$
and~\eqref{trans} then yields $-k/\alpha \to 1$ 
in the limit. 
\end{proof}
\begin{Remark}\label{Rem.nrs}
It is remarkable that $\lambda_1^\alpha((-R,R))$
does not converge to zero as $R \to +\infty$, 
which is the lowest point 
in the spectrum of the ``free'' Laplacian $-\Delta_\alpha^\Real$.
Consequently, $-\Delta_\alpha^{(-R,R)}$ 
cannot converge to $-\Delta_\alpha^\Real$
in a norm-resolvent sense as $R \to +\infty$. This is still
true in higher dimensions, as may be seen from the upper bound in Theorem~\ref{Thm.ballbounds}.
\end{Remark}

For later purposes, we apply Proposition~\ref{Prop.shrink}
to the behaviour of the first Robin eigenvalue 
in long thin rectangles.
\begin{Corollary}\label{Prop.rectangle}
Let $\mathcal{R}_{a,b} := (-a,a)\times(-b,b)$ be a rectangle
of half-sides $a>0$ and $b>0$.
If $\alpha < 0$, then
\begin{align*}
  \lambda_1^\alpha(\mathcal{R}_{a,b}) &= \frac{\alpha}{b} + o(b^{-1})
  \,, && a \to \infty \,, \ b \to 0 \,,  
\end{align*}
\end{Corollary}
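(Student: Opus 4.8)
The plan is to deduce Corollary~\ref{Prop.rectangle} from Proposition~\ref{Prop.shrink} by exploiting the product structure of the rectangle. First I would recall that for a rectangle $\mathcal{R}_{a,b} = (-a,a)\times(-b,b)$, the Robin Laplacian $-\Delta_\alpha^{\mathcal{R}_{a,b}}$ separates variables: it is unitarily equivalent to $-\Delta_\alpha^{(-a,a)} \otimes I + I \otimes (-\Delta_\alpha^{(-b,b)})$ acting on $\sii((-a,a)) \otimes \sii((-b,b))$. This is immediate from the form~\eqref{form}, since on a product domain $\|\nabla u\|^2$ splits into the two partial-derivative contributions and $\partial\mathcal{R}_{a,b}$ decomposes into the four sides, two of which contribute the one-dimensional boundary form in the $x_1$-variable and two in the $x_2$-variable (weighted by the lengths of the transverse sides, which cancel against the corresponding $\sii$ normalisations). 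Consequently the spectrum is the sum of the spectra, and in particular
\[
  \lambda_1^\alpha(\mathcal{R}_{a,b}) = \lambda_1^\alpha((-a,a)) + \lambda_1^\alpha((-b,b))
  \,.
\]

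Next I would apply Proposition~\ref{Prop.shrink} to each term. As $b \to 0$ we have $\lambda_1^\alpha((-b,b)) = \alpha/b + o(b^{-1})$, which is the dominant (and divergent) contribution. As $a \to \infty$ we have $\lambda_1^\alpha((-a,a)) = -\alpha^2 + o(1)$, which is bounded. Hence
\[
  \lambda_1^\alpha(\mathcal{R}_{a,b}) = \frac{\alpha}{b} + o(b^{-1}) + O(1) = \frac{\alpha}{b} + o(b^{-1})
  \,,
\]
as $a\to\infty$ and $b\to 0$, since any $O(1)$ term is absorbed into $o(b^{-1})$. This establishes the claimed asymptotics.

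The only genuine point requiring care — and the step I would expect to be the main (if modest) obstacle — is the justification of the separation of variables at the level of the self-adjoint Robin operator, i.e.\ verifying that the form domain $W^{1,2}(\mathcal{R}_{a,b})$ is exactly the Hilbert-space tensor product of the one-dimensional form domains and that the boundary form decomposes additively with the correct weights. This is standard (it follows from the density of finite sums $\sum_j f_j(x_1) g_j(x_2)$ in $W^{1,2}$ of a rectangle and from Fubini applied to the boundary integrals), so in the write-up I would simply invoke it with a one-line explanation rather than belabour it. One should also note that the limit is a joint one in $a$ and $b$; since the two eigenvalues depend on $a$ and $b$ separately and the estimates in Proposition~\ref{Prop.shrink} are uniform in the sense stated there, the joint limit poses no additional difficulty. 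Alternatively, and perhaps more cleanly, one can avoid separation of variables altogether by a direct two-sided bracketing argument: Neumann bracketing on a partition of $\mathcal{R}_{a,b}$ and a product test function $\phi_a(x_1)\phi_b(x_2)$ built from the one-dimensional minimisers give matching upper and lower bounds of the form $\lambda_1^\alpha((-a,a)) + \lambda_1^\alpha((-b,b)) + (\text{lower order})$, but since the product structure here is exact, the separation-of-variables route is the shortest.
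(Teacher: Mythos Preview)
Your proposal is correct and follows exactly the paper's approach: separation of variables gives $\lambda_1^\alpha(\mathcal{R}_{a,b}) = \lambda_1^\alpha((-a,a)) + \lambda_1^\alpha((-b,b))$, and Proposition~\ref{Prop.shrink} applied to each summand yields the asymptotics. The paper's own proof is in fact just these two sentences, without the additional discussion you include about justifying the tensor decomposition or the alternative bracketing argument.
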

\begin{proof}
By separation of variables, we have
$$
  \lambda_1^\alpha(\mathcal{R}_{a,b})
  = \lambda_1^\alpha((-a,a)) + \lambda_1^\alpha((-b,b))
  \,.
$$
The asymptotic behaviour is then 
a direct consequence of Proposition~\ref{Prop.shrink}.
\end{proof}
%

\section{Fixed volume: upper bounds}
%
In this section we give a proof of Theorem~\ref{Thm.bound}.

\subsection{Star-shaped geometries}
Following~\cite{FK5}, we assume that $\Omega$ is \emph{star-shaped}
with respect to a point $\xi\in\Omega$,
\ie, for each point $x\in\partial\Omega$ the segment joining~$\xi$
with~$x$ lies in $\Omega\cup\{x\}$
and is transversal to~$\partial\Omega$ at the point~$x$.
By Rademacher's theorem, the outward unit normal vector field
$
  \nu: \partial\Omega \to \Real^d
$
can be uniquely defined almost everywhere on~$\partial\Omega$.
At those points $x\in\partial\Omega$ for which $\nu(x)$
is uniquely defined, we introduce the \emph{support function}
\begin{equation}
  h_\xi(x) := (x-\xi) \cdot \nu(x) \,,
\end{equation}
where the dot denotes the standard scalar product in~$\Real^d$.

We say that~$\Omega$ is \emph{strictly star-shaped}
with respect to the point $\xi\in\Omega$
if~$\Omega$ is star-shaped with respect to~$\xi$
and the support function is uniformly positive,
\ie,
\begin{equation}\label{Ass.strict}
  \essinf_{x\in\partial\Omega} h_\xi(x) > 0 \,.
\end{equation}
In this case, we shall denote by~$\omega$ the set of points
with respect to which~$\Omega$ is strictly star-shaped,
and define the following intrinsic quantity of the domain
\begin{equation}\label{Freitas}
  F(\Omega) := \inf_{\xi\in\omega} \int_{\partial\Omega}
  \frac{\der x}{h_\xi(x)}
  \,,
\end{equation}
where~$\der x$ denotes the surface measure of~$\partial\Omega$.
We refer to~\cite{FK5} for evaluation of the quantity~$F(\Omega)$
for certain special domains.

\subsection{Shrinking coordinates}
Given a point $\xi\in\omega$ with respect to which~$\Omega$
is strictly star-shaped, we parameterise $\Omega\setminus\{\xi\}$
by means of the mapping
\begin{equation}\label{coordinates}
  \mathcal{L}: \partial\Omega\times(0,1) \to \Real^d:
  \big\{ (x,t) \mapsto \xi + (x-\xi) t \big\}
  \,.
\end{equation}
It has been shown in~\cite{FK5} that~$\mathcal{L}$ indeed induces
a diffeomorphism, so that $\Omega\setminus\{\xi\}$ can be identified
with the Riemannian manifold $M := \big(\partial\Omega\times(0,1),G\big)$
with the induced metric $G := \nabla\mathcal{L} \cdot (\nabla\mathcal{L})^T$,
for which, in particular,
\begin{equation}\label{FK5}
  |G(x,t)| = |g(x)| \, h_\xi(x)^2 \, t^{2(d-1)}
  \qquad \mbox{and} \qquad
  G^{dd}(x,t) = h_\xi(x)^{-2}
  \,.
\end{equation}
Here $|G| := \det(G)$, $g$~denotes the metric tensor of~$\partial\Omega$
(as a submanifold of~$\Real^d$)
and $G^{ij}$ are the coefficients of the inverse metric~$G^{-1}$.
Consequently, the volume element~$\der m$
of the manifold~$M$ is decoupled as follows:
\begin{equation}\label{volume}
  \der m = h_\xi(x) \, \der x \, \wedge \, t^{d-1} \, \der t
  \,.
\end{equation}

In contrast with~\eqref{Freitas},
the integral of the support function is actually independent of~$\xi$.
Indeed, recalling~\eqref{volume},
we have the following identity for the Lebesgue measure of~$\Omega$
\begin{equation}\label{independent}
  |\Omega| =  \int_{\partial\Omega \times (0,1)} \der m
  = \frac{1}{d} \int_{\partial\Omega} h_\xi(x)\,\der x
  \,.
\end{equation}
For the $(d-1)$-dimensional Hausdorff measure of~$\partial\Omega$,
we have
$|\partial\Omega| = \int_{\partial\Omega} \der x$.

The following proposition provides a lower bound to
the geometric quantity~\eqref{Freitas}.
\begin{Proposition}\label{Prop.F.bound}
Let~$\Omega$ be a bounded strictly star-shaped domain in~$\Real^d$
with Lipschitz boundary~$\partial\Omega$. Then
\begin{equation}\label{F.bound}
  F(\Omega) \geq \frac{|\partial\Omega|^2}{d \, |\Omega|}
  \geq \frac{|\partial B|^2}{d\,|B|}
  \,,
\end{equation}
where~$B$ is a ball of the same volume as~$\Omega$.
Here the second equality is attained 
for $d$ is larger than two if, and only if, $\Omega=B$, and it holds for any
disk when $d$ is two; the first inequality is attained for any ball.
\end{Proposition}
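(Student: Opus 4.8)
The plan is to establish the two inequalities in~\eqref{F.bound} separately, the first being the substantive one and the second being an immediate consequence of the isoperimetric inequality. For the first inequality, $F(\Omega) \geq |\partial\Omega|^2/(d\,|\Omega|)$, the natural tool is the Cauchy--Schwarz inequality applied on~$\partial\Omega$. Fix any $\xi\in\omega$. Writing $1 = h_\xi(x)^{1/2} \cdot h_\xi(x)^{-1/2}$ and using Cauchy--Schwarz with respect to the surface measure $\der x$,
\[
  |\partial\Omega|^2 = \left( \int_{\partial\Omega} \der x \right)^2
  \leq \left( \int_{\partial\Omega} h_\xi(x)\,\der x \right)
  \left( \int_{\partial\Omega} \frac{\der x}{h_\xi(x)} \right).
\]
The first factor on the right equals $d\,|\Omega|$ by the identity~\eqref{independent} (which is crucially $\xi$-independent), so dividing through yields
\[
  \int_{\partial\Omega} \frac{\der x}{h_\xi(x)} \geq \frac{|\partial\Omega|^2}{d\,|\Omega|}
\]
for \emph{every} admissible~$\xi$; taking the infimum over $\xi\in\omega$ on the left gives $F(\Omega) \geq |\partial\Omega|^2/(d\,|\Omega|)$ by the definition~\eqref{Freitas}.

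For the second inequality, I would invoke the classical isoperimetric inequality: among domains of a given volume, the ball minimises the perimeter, so $|\partial\Omega| \geq |\partial B|$ where $|B| = |\Omega|$. Since the map $x \mapsto x^2$ is increasing on the positive reals and $|\Omega| = |B|$, this gives $|\partial\Omega|^2/(d\,|\Omega|) \geq |\partial B|^2/(d\,|B|)$ directly.

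For the equality discussion, I would track the two inequalities. Equality in Cauchy--Schwarz forces $h_\xi$ to be constant $\der x$-a.e.\ on $\partial\Omega$; a constant support function (for a strictly star-shaped Lipschitz domain) characterises balls centred at~$\xi$ when $d \geq 2$, but is automatic for the \emph{disk} in $d=2$ because there the boundary integrand $h_\xi(x)^{-1}$ enters~\eqref{Freitas} in a way that—together with the standard fact that for a disk of radius~$R$ centred at~$\xi$ one has $h_\xi \equiv R$ and $\int_{\partial B}\der x/h_\xi = |\partial B|/R = |\partial B|^2/(2|B|)$—shows the first inequality is an equality for any disk. The second inequality $|\partial\Omega|^2/(d\,|\Omega|) \geq |\partial B|^2/(d\,|B|)$ is, by the equality case of the isoperimetric inequality, an equality precisely when $\Omega$ is a ball, and trivially for any ball; combining the two cases gives the stated characterisations. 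The main obstacle is not analytic but bookkeeping: one must carefully distinguish the $d\geq 3$ rigidity (where constancy of the support function pins down the ball up to its centre) from the genuinely two-dimensional phenomenon, and verify that a disk—\emph{any} disk, not only one concentric with the infimising point—indeed attains the first bound, which reduces to the elementary computation $F(B_R) = 2\pi R \cdot R^{-1} = 2\pi = |\partial B_R|^2/(2|B_R|)$ for the disk.
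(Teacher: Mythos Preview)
Your proof is correct and follows essentially the same route as the paper: Cauchy--Schwarz on $\partial\Omega$ with the splitting $1 = h_\xi^{1/2}\cdot h_\xi^{-1/2}$, the identity~\eqref{independent} to replace $\int_{\partial\Omega} h_\xi\,\der x$ by $d\,|\Omega|$, then the isoperimetric inequality for the second bound, and finally the direct computation $F(B_R)=|\partial B_R|/R=|\partial B_R|^2/(d\,|B_R|)$ for balls. The paper's treatment of the equality cases is in fact terser than yours; your discussion of the equality conditions is slightly over-elaborated (the sentence about the integrand ``entering~\eqref{Freitas} in a way that\dots'' is not really an argument), but the substantive content matches.
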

\begin{proof}
By the Schwarz inequality,
\begin{equation}
  |\partial\Omega|^2
  = \left(
  \int_{\partial\Omega} \sqrt{h_\xi(x)} \, \frac{1}{\sqrt{h_\xi(x)}} \, \der x
  \right)^2
  \leq \int_{\partial\Omega} h_\xi(x) \, \der x
  \int_{\partial\Omega} \frac{\der x}{h_\xi(x)}
\end{equation}
for any $\xi \in \omega$.
Recalling~\eqref{independent} and minimising over $\xi \in \omega$,
we get the first inequality of~\eqref{F.bound}.
The second estimate in~\eqref{F.bound} follows by the isoperimetric inequality,
which is known to be optimal if, and only if, $\Omega=B$.
Finally, it is easy to see that
\begin{equation}\label{F.ball}
  F(B_r) = \frac{|\partial B_r|}{r}
  = \frac{|\partial B_r|^2}{d\,|B_r|}
  \,.
\end{equation}
for any ball~$B_r$ of radius~$r$.
\end{proof}

Using the identification of~$\Omega$ with~$M$,
we introduce the unitary transform
from the Hilbert space $\sii(\Omega)$ to
$\sii\big(\partial\Omega\times(0,1),\der m\big)$
by $u \mapsto u \circ \mathcal{L} =: \Psi$.
Then
\begin{equation}\label{unitary}
\begin{aligned}
  \|u\|_{\sii(\Omega)}^2
  &=
  \int_{\partial\Omega\times(0,1)} |\Psi(x,t)|^2 \, \der m
  \,, \qquad
  \\
  \|\nabla u\|_{\sii(\Omega)}^2
  &=
  \int_{\partial\Omega\times(0,1)}
  \big(\overline{\partial_i\Psi} \, G^{ij} \, \partial_j\Psi\big)(x,t) \,\der m
  \,, \qquad
  \\
  \|u\|_{\sii(\partial\Omega)}^2
  &=
  \int_{\partial\Omega} |\Psi(x,1)|^2 \, \der x
  \,,
\end{aligned}
\end{equation}
where the Einstein summation convention is assumed in the central line,
with $\partial_i\Psi := \partial\Psi/\partial \theta^i$ for $i \in \{1,\dots,d-1\}$
using some local parameterisation
$x=x(\theta^1,\dots,\theta^{d-1})$ of~$\partial\Omega$
and $\partial_d\Psi := \partial\Psi/\partial t$.

\subsection{Proof of Theorem~\ref{Thm.bound}}
With help of the unitary transform above,
we choose in~\eqref{Rayleigh} a test function $u := \Psi \circ \mathcal{L}^{-1}$,
where~$\Psi$ depends on the radial variable~$t$ only,
\ie\ $\Psi(s,t) = \psi(t)$ with any smooth function $\psi:[0,1]\to\Real$.
Using~\eqref{unitary}, \eqref{volume} and~\eqref{FK5},
we thus obtain
\begin{equation}\label{bound1}
  \lambda_1^\alpha(\Omega)
  \leq \frac{\displaystyle
  \int_{0}^{1} |\psi'(t)|^2 \, t^{d-1} \, \der t
  \int_{\partial\Omega} \frac{\der x}{h_\xi(x)}
  +\alpha \, |\psi(1)|^2 \, \int_{\partial\Omega} \der x}
  {\displaystyle
  \int_{0}^{1} |\psi(t)|^2 \, t^{d-1} \, \der t
  \int_{\partial\Omega} h_\xi(x)\,\der x}
\end{equation}
for any smooth~$\psi$ and every $\xi \in \omega$.
Minimising over $\xi \in \omega$ and recalling~\eqref{Freitas}
and~\eqref{independent}, we arrive at the bound
\begin{equation}\label{bound2}
  \lambda_1^\alpha(\Omega)
  \leq \frac{\displaystyle
  F(\Omega) \int_{0}^{1} |\psi'(t)|^2 \, t^{d-1} \, \der t
  +\alpha \, |\partial\Omega| \, |\psi(1)|^2}
  {\displaystyle
  d \, |\Omega| \int_{0}^{1} |\psi(t)|^2 \, t^{d-1} \, \der t}
\end{equation}
for any smooth~$\psi$.

Now, let~$B$ an open ball of the same volume as~$\Omega$,
\ie\ $|B|=|\Omega|$.
Employing the fact that the eigenfunction of~$-\Delta_\alpha^{B}$
corresponding to the lowest eigenvalue $\lambda_1^\alpha(B)$
is radially symmetric, we have the equality
\begin{equation}\label{ball.eq}
  \lambda_1^\alpha(B)
  = \frac{\displaystyle
  F(B) \int_{0}^{1} |{\varphi_1^\alpha}'(t)|^2 \, t^{d-1} \, \der t
  +\alpha \, |\partial B| \, |\varphi_1^\alpha(1)|^2}
  {\displaystyle
  d \, |\Omega| \int_{0}^{1} |\varphi_1^\alpha(t)|^2 \, t^{d-1} \, \der t}
\end{equation}
with some smooth function~$\varphi_1^\alpha:[0,1]\to\Real$.

Combining~\eqref{bound2} with~\eqref{ball.eq},
we get Theorem~\ref{Thm.bound}.
\hfill\qed

\subsection{Comments on Theorem~\ref{Thm.bound}}
\subsubsection{Theorem~\ref{Thm.bound} does not prove Bareket's conjecture}
By Proposition~\ref{Prop.F.bound},
\begin{equation}
  \lambda_1^\alpha(\Omega) \leq \lambda_1^{\tilde{\alpha}}(B)
\end{equation}
(recall that $\lambda_1^\alpha(\Omega)$ is non-positive for any $\alpha \leq 0$,
\cf~\eqref{bound.trivial}).
However, since Proposition~\ref{Prop.F.bound} also implies
\begin{equation}
  \tilde{\alpha}
  \geq \alpha \, \frac{|\partial B|}{|\partial\Omega|}
  \geq \alpha
  \,,
\end{equation}
where the second inequality follows from the isoperimetric inequality,
the bound~\eqref{bound} together with these estimates
does not seem to give anything useful as regards Bareket's conjecture
(that requires $\lambda_1^\alpha(\Omega) \leq \lambda_1^{\alpha}(B)$).

\subsubsection{Bound~\eqref{bound} is not optimal
in the limit $\alpha \to -\infty$}
Indeed, if~$\partial\Omega$ is smooth,
then \cite[Thm.~2.11]{Levitin-Parnovski_2008} yields
\begin{equation}
  \lambda_1^\alpha(\Omega) = -\alpha^2 + o(\alpha^2)
  \qquad \mbox{as} \qquad
  \alpha \to -\infty
  \,.
\end{equation}
Consequently, \eqref{bound}~implies
\begin{equation}
  -1 = \lim_{\alpha \to -\infty} \frac{\lambda_1^\alpha(\Omega)}{\alpha^2}
  \leq \frac{F(\Omega)}{F(B)} \,
  \lim_{\alpha \to -\infty} \frac{\lambda_1^{\tilde{\alpha}}(B)}{\alpha^2}
  = - \frac{|\partial\Omega|^2}{|\partial B|^2} \, \frac{F(B)}{F(\Omega)}
  \,.
\end{equation}
However, the right-hand side is \emph{greater} than or equal to~$-1$
due to Proposition~\ref{Prop.F.bound}.

\subsubsection{Bound~\eqref{bound} is optimal in the limit $\alpha \to 0$}
Indeed, by analytic perturbation theory
(see, \eg, \cite{Lacey-Ockendon-Sabina_1998}),
\begin{equation}\label{LOS}
  \lambda_1^\alpha(\Omega) = \frac{|\partial\Omega|}{|\Omega|} \, \alpha
  + O(\alpha^2)
  \qquad \mbox{as} \qquad
  \alpha \to 0
  \,.
\end{equation}
Consequently, \eqref{bound}~implies
\begin{equation}
  \frac{|\partial\Omega|}{|\Omega|}
  = \lim_{\alpha \to 0-} \frac{\lambda_1^\alpha(\Omega)}{\alpha}
  \geq \frac{F(\Omega)}{F(B)} \,
  \lim_{\alpha \to 0-} \frac{\lambda_1^{\tilde{\alpha}}(B)}{\alpha}
  = \frac{|\partial\Omega|}{|B|}
\end{equation}
and it remains to recall that $|\Omega|=|B|$.
Moreover, since
\begin{equation}
  \frac{|\partial\Omega|}{|B|}
  \geq \frac{|\partial B|}{|B|}
  = \lim_{\alpha \to 0-} \frac{\lambda_1^\alpha(B)}{\alpha}
  \,,
\end{equation}
the argument also implies that~\eqref{bound} yields
the validity of $\lambda_1^\alpha(\Omega) \leq \lambda_1^{\alpha}(B)$
for all the negative~$\alpha$ which have sufficiently small~$|\alpha|$
(with the smallness depending on~$\Omega$).

\subsubsection{Bound~\eqref{bound} is optimal for long thin rectangles}
\label{Sec.comp}
Let $\mathcal{R}_{a,b} := (-a,a)\times(-b,b)$ be a rectangle
of half-sides $a>0$ and $b>0$ and let us compare the upper bound
of Theorem~\ref{Thm.bound} with the actual eigenvalue of the rectangle.
We are interested in the regime when $a \to \infty$ and $b \to 0$,
while keeping the area of the rectangle fixed, 
say $|\mathcal{R}_{a,b}| = 4 a b = 1$.
All the asymptotic formulae below in this subsection
are with respect to this limit.

The geometric quantity $F(\Omega)$ was computed in~\cite{FK5}
for various domains~$\Omega$ including rectangular parallelepipeds 
and ellipsoids. In particular, \cite[Ex.~1]{FK5} 
and \cite[Ex.~2]{FK5} (see also~\eqref{F.ball})
respectively yield 
$$
  F(\mathcal{R}_{a,b}) = |\mathcal{R}_{a,b}| \, (a^{-2}+b^{-2}) 
  \qquad \mbox{and} \qquad
  F(B) = 2 \pi
  \,,
$$
where~$B$ is the disk of the same area as $\mathcal{R}_{a,b}$,
\ie~the radius of~$B$ equals $1/\sqrt{\pi}$.
(Note that $F(B_R)$ is independent of~$R$ if $d=2$.)

We have $|\partial \mathcal{R}_{a,b}| = 4 (a+b) = O(a)$ 
and $F(\mathcal{R}_{a,b}) = O(a^2)$.
Consequently, the ratio
$|\partial \mathcal{R}_{a,b}|/F(\mathcal{R}_{a,b})$ 
behaves as $O(a^{-1})$
and the same decay rate holds for~$\tilde{\alpha}$ from~\eqref{bound}.
Using~\eqref{LOS}, we therefore have
$$
  \lambda_1^{\tilde{\alpha}}(B) 
  = \frac{|\partial B|}{|B|} \, \tilde{\alpha} + O(\tilde{\alpha}^2)
  = \frac{|\partial \mathcal{R}_{a,b}|}{|B|} 
  \frac{F(B)}{F(\mathcal{R}_{a,b})}
  \, \alpha + O(a^{-2})
  \,.
$$
Consequently, the upper bound for $\lambda_1^{\alpha}(\mathcal{R}_{a,b})$ 
given by Theorem~\ref{Thm.bound} reads
\begin{equation}\label{comp1}
  \frac{F(\mathcal{R}_{a,b})}{F(B)} \, \lambda_1^{\tilde{\alpha}}(B)
  = \frac{|\partial \mathcal{R}_{a,b}|}{|B|} \, \alpha 
  + \frac{F(\mathcal{R}_{a,b})}{F(B)} \, O(a^{-2})
  = 4 a \alpha + O(1)
  \,.
\end{equation}
At the same time, Corollary~\ref{Prop.rectangle} implies
the exact asymptotics
\begin{equation}\label{comp2}
  \lambda_1^{\alpha}(\mathcal{R}_{a,b}) 
  = 4 a \alpha + o(a)
  \,.
\end{equation}
We see that the leading orders of~\eqref{comp1} and~\eqref{comp2} coincide.

\section{Fixed perimeter: the disk maximises the first eigenvalue}
%
In this section we restrict to $d=2$.
Moreover, we assume that the bounded domain
$\Omega \subset \Real^2$ is of class~$C^2$.
Then the boundary~$\partial\Omega$ will, in general,
be composed of a finite union of $C^2$-smooth Jordan curves
$\Gamma_0, \Gamma_1, \dots, \Gamma_N$, $N \geq 0$,
where~$\Gamma_0$ is the \emph{outer boundary}
\ie~$\Omega$ lies in the \emph{interior}~$\Omega_0$ of~$\Gamma_0$.
If $N=0$, then~$\Omega$ is simply connected and $\Omega=\Omega_0$.

We denote by $L_\mathrm{tot}:=|\partial\Omega|$
and $L_0:=|\Gamma_0|$
the perimeter and \emph{outer perimeter} of~$\Omega$, respectively,
where~$|\cdot|$ stands for the $1$-dimensional Hausdorff measure.
By the isoperimetric inequality, we have
%
$
  L_0^2 \geq 4 \pi A_\mathrm{tot}
$,
%
where $A_\mathrm{tot}:=|\Omega|$ denotes the area of~$\Omega$.
Here~$|\cdot|$ stands for the $2$-dimensional Lebesgue measure.

Under our regularity assumptions,
the operator domain
$
  \Dom(-\Delta_\alpha^\Omega)
$
consists of functions $u \in W^{2,2}(\Omega)$
which satisfy the Robin boundary conditions
of~\eqref{eq:robin} in the sense of traces
and the boundary value problem~\eqref{eq:robin}
can be thus considered in a classical setting.

\subsection{An upper bound from~\texorpdfstring{\cite{FK7}}{FK7}}
As in~\cite{FK7}, the main ingredient in our proof of Theorem~\ref{Thm.perimeter}
is the method of parallel coordinates that was originally used
by Payne and Weinberger~\cite{Payne-Weinberger_1961}
in the Dirichlet case
(which formally corresponds to $\alpha = +\infty$
in the present setting).
It consists in choosing a test function in~\eqref{Rayleigh}
whose level lines are parallel to the boundary of~$\Omega$.
As a matter of fact, since~$\alpha$ is non-positive,
it is possible to base the coordinates on
the outer component~$\Gamma_0 \subset \partial\Omega$ only.
The result of this approach is the following theorem
established in~\cite{FK7}.

\begin{Theorem}[\cite{FK7}]\label{Thm.better}
Let $\alpha \leq 0$.
For any bounded planar domain~$\Omega$ of class~$C^2$,
$$
  \lambda_1^\alpha(\Omega) \leq \mu_1^\alpha(A_{R_1,R_2})
  \,,
$$
where~$\mu_1^\alpha(A_{R_1,R_2})$ is the first eigenvalue
of the Laplacian in the annulus $A_{R_1,R_2}$ with radii
\begin{equation}\label{radii}
  R_1 := \frac{\sqrt{L_0^2-4\pi A_\mathrm{tot}}}{2\pi}
  \,, \qquad
  R_2 := \frac{L_0}{2\pi} 	
  \,,
\end{equation}
subject to the Robin boundary condition with~$\alpha$
on the outer circle and the Neumann boundary condition
on the inner circle.
\end{Theorem}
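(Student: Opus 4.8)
The plan is to reprove the result of~\cite{FK7} by a Payne--Weinberger-type transplantation in which the radial ground state of the comparison annulus is carried over to~$\Omega$ so that the two functions become \emph{equimeasurable with respect to the volume swept from the outer boundary}. The case $\alpha=0$ is trivial, since then $\lambda_1^0(\Omega)=0=\mu_1^0(A_{R_1,R_2})$ (Neumann), so one may assume $\alpha<0$. First I would fix $h\in C^1([R_1,R_2])$, $h>0$, the radial ground state of the mixed problem on $A_{R_1,R_2}$ (Neumann at $r=R_1$, Robin with parameter $\alpha$ at $r=R_2$), so that $\mu_1^\alpha(A_{R_1,R_2})$ is the value of the Rayleigh quotient of~$h$ for the one-dimensional form with Dirichlet energy $\int_{R_1}^{R_2}|h'(r)|^2\,2\pi r\,\der r$, boundary term $\alpha\,2\pi R_2\,h(R_2)^2$ and weight $2\pi r$. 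Next I would introduce parallel coordinates based on the outer curve only: let $\delta(x):=\dist(x,\Gamma_0)$ on~$\Omega$, which is Lipschitz with $|\nabla\delta|=1$ a.e.\ and coincides with $\dist(\cdot,\partial\Omega_0)$ because $\Omega\subseteq\Omega_0$; put $\ell(t):=\mathcal H^1\big(\{x\in\Omega:\delta(x)=t\}\big)$ and $d_{\max}:=\max_{\overline\Omega}\delta$.

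The sole geometric input is the Bonnesen-type bound for the inner parallels of the $C^2$ curve $\Gamma_0$, namely $\ell(t)\le L_0-2\pi t$ for a.e.\ $t\in(0,d_{\max})$ --- which in particular forces $d_{\max}\le L_0/(2\pi)=R_2$ --- together with the coarea identity $\int_0^{d_{\max}}\ell(t)\,\der t=|\Omega|=A_\mathrm{tot}$. Using this I would reparametrise~$\Omega$ by ``swept area'': set $\sigma(t):=\int_0^t\ell(s)\,\der s$, a strictly increasing Lipschitz bijection $[0,d_{\max}]\to[0,A_\mathrm{tot}]$ with $\sigma'=\ell$ a.e.; the analogous map for the comparison annulus is $\sigma_\mathrm{ann}(s):=2\pi R_2 s-\pi s^2$, a bijection $[0,R_2-R_1]\to[0,A_\mathrm{tot}]$ since $\pi(R_2^2-R_1^2)=A_\mathrm{tot}$. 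Integrating the Bonnesen bound gives $\sigma(t)\le\sigma_\mathrm{ann}(t)$ on $[0,R_2-R_1]$, hence, comparing inverses, $\sigma^{-1}(v)\ge\sigma_\mathrm{ann}^{-1}(v)$ for all $v\in[0,A_\mathrm{tot}]$.

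The test function to be inserted into~\eqref{Rayleigh} is then $u:=H\circ\sigma\circ\delta$, where $H:[0,A_\mathrm{tot}]\to\Real$ is~$h$ rewritten in the swept-area variable, i.e.\ $H(v):=h(r)$ with $\pi(R_2^2-r^2)=v$; one checks $H$ is Lipschitz and $H(0)=h(R_2)$, so $u\in W^{1,2}(\Omega)$. Using $|\nabla\delta|=1$, the coarea formula and the substitution $v=\sigma(t)$ I would obtain the two identities $\|u\|_{\sii(\Omega)}^2=\int_0^{A_\mathrm{tot}}H(v)^2\,\der v=\|h\|_{\sii(A_{R_1,R_2})}^2$ and $\|\nabla u\|_{\sii(\Omega)}^2=\int_0^{A_\mathrm{tot}}\ell(\sigma^{-1}(v))^2\,H'(v)^2\,\der v$, to be set against $\|\nabla h\|_{\sii(A_{R_1,R_2})}^2=\int_0^{A_\mathrm{tot}}\big(2\pi(R_2-\sigma_\mathrm{ann}^{-1}(v))\big)^2\,H'(v)^2\,\der v$. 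Since $\ell(\sigma^{-1}(v))\le 2\pi(R_2-\sigma^{-1}(v))$ by the Bonnesen bound and $0\le R_2-\sigma^{-1}(v)\le R_2-\sigma_\mathrm{ann}^{-1}(v)$ by the inverse comparison, it follows that $\|\nabla u\|_{\sii(\Omega)}^2\le\|\nabla h\|_{\sii(A_{R_1,R_2})}^2$. For the boundary term I would use $u\equiv h(R_2)$ on $\Gamma_0$ and $\alpha\int_{\Gamma_j}u^2\le 0$ on the inner components $\Gamma_1,\dots,\Gamma_N$ (here $\alpha\le 0$ is essential), so that $\alpha\|u\|_{\sii(\partial\Omega)}^2\le\alpha\,2\pi R_2\,h(R_2)^2$. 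Adding the two estimates yields $Q_\alpha^\Omega[u]\le\|\nabla h\|_{\sii(A_{R_1,R_2})}^2+\alpha\,2\pi R_2\,h(R_2)^2=\mu_1^\alpha(A_{R_1,R_2})\,\|h\|_{\sii(A_{R_1,R_2})}^2=\mu_1^\alpha(A_{R_1,R_2})\,\|u\|_{\sii(\Omega)}^2$, and~\eqref{Rayleigh} gives the claim.

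The main obstacle is not the transplantation but the rigorous handling of the parallel coordinates: establishing the Bonnesen-type perimeter inequality $\ell(t)\le L_0-2\pi t$ for the inner parallel sets of $\Gamma_0$ (via the Steiner tube formula, or Gauss--Bonnet together with a description of the cut locus of $\Gamma_0$ inside $\Omega_0$), verifying that $\sigma$ is a genuine absolutely continuous reparametrisation with $\sigma'=\ell$ a.e.\ so that the substitution $v=\sigma(t)$ is legitimate in each of the integrals above, and confirming that $u=H\circ\sigma\circ\delta\in W^{1,2}(\Omega)$ with the stated a.e.\ gradient $\nabla u=H'(\sigma(\delta))\,\ell(\delta)\,\nabla\delta$. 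Once these measure-theoretic points are secured, what remains is only the coarea formula and monotone changes of variables.
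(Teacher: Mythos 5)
Your argument is correct and is essentially the proof the paper attributes to~\cite{FK7}: parallel coordinates built on the outer component $\Gamma_0$ only, the Bonnesen-type bound $\ell(t)\le L_0-2\pi t$ for the inner level sets, and an area-equimeasurable transplantation of the annulus ground state (your $H\circ\sigma\circ\delta$ is exactly the test function $h\big(\sqrt{R_2^2-\sigma(\delta)/\pi}\big)$ used there), with the sign $\alpha\le 0$ absorbing the inner boundary components. The technical points you flag (the inner-parallel inequality and the Lipschitz chain rule/coarea substitutions, plus the trivial degenerate case $R_1=0$, i.e.\ $\Omega$ a disk) are precisely the ones handled in~\cite{FK7}, so no new idea is missing.
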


We note that~$A_{R_1,R_2}$ has the same area as~$\Omega$,
\ie\ $|A_{R_1,R_2}|=A_\mathrm{tot}=|\Omega|$.
Using the rotational symmetry and polar coordinates,
the variational characterisation of
the Robin-Neumann eigenvalue reads
\begin{equation}\label{Rayleigh.annulus}
  \mu_1^\alpha(A_{R_1,R_2}) =
  \inf_{\stackrel[\psi\not=0]{}{\psi \in W^{1,2}((R_1,R_2))}}
  \frac{\displaystyle
  \int_{R_1}^{R_2} \psi'(r)^2 \, r \, \der r
  +\alpha \, R_2 \, \psi(R_2)^2}
  {\displaystyle
  \int_{R_1}^{R_2} \psi(r)^2 \, r \, \der r}
  \,.
\end{equation}

\subsection{Proof of Theorem~\ref{Thm.perimeter}}
The following upper bound represents a crucial step
in our proof of Theorem~\ref{Thm.perimeter}.
\begin{Proposition}\label{Prop.test}
Let $\alpha \leq 0$. For any $0<R_1<R_2$, we have
$$
  \mu_1^\alpha(A_{R_1,R_2}) \leq \lambda_1^\alpha(B_{R_2})
  \,,
$$
where~$B_{R_2}$ denotes the disk of radius~$R_2$.
\end{Proposition}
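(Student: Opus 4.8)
The statement compares a Robin--Neumann eigenvalue on an annulus with a pure Robin eigenvalue on the disk of the same outer radius, so the natural strategy is to feed a carefully chosen test function into the variational characterisation~\eqref{Rayleigh.annulus}. The plan is to use (a suitably restricted version of) the minimiser~$\phi_1$ of the disk problem~\eqref{Rayleigh.ball} on~$B_{R_2}$. Concretely, let~$\phi_{R_2}$ be the positive eigenfunction of~$-\Delta_\alpha^{B_{R_2}}$ associated with $\lambda_1^\alpha(B_{R_2})$; it is smooth on $[0,R_2]$, satisfies $\phi_{R_2}'(0)=0$ and the Robin condition at $R_2$, and by~\eqref{explicit} is strictly positive and increasing. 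I would then test~\eqref{Rayleigh.annulus} with $\psi := \phi_{R_2}\big|_{[R_1,R_2]}$.

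With this choice, the numerator of the Rayleigh quotient in~\eqref{Rayleigh.annulus} is
$\int_{R_1}^{R_2} \phi_{R_2}'(r)^2\,r\,\der r + \alpha R_2\,\phi_{R_2}(R_2)^2$
and the denominator is $\int_{R_1}^{R_2}\phi_{R_2}(r)^2\,r\,\der r$. Using the eigenvalue equation~\eqref{elliptic} for~$\phi_{R_2}$ on $[R_1,R_2]$ and integrating by parts once, one gets
$\int_{R_1}^{R_2}\phi_{R_2}'(r)^2 r\,\der r = \big[r\,\phi_{R_2}(r)\phi_{R_2}'(r)\big]_{R_1}^{R_2} + \lambda_1^\alpha(B_{R_2})\int_{R_1}^{R_2}\phi_{R_2}(r)^2 r\,\der r$. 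The boundary term at $r=R_2$ is $R_2\,\phi_{R_2}(R_2)\phi_{R_2}'(R_2) = -\alpha R_2\,\phi_{R_2}(R_2)^2$ by the Robin condition, and it cancels exactly against the $\alpha R_2\,\phi_{R_2}(R_2)^2$ in the numerator of~\eqref{Rayleigh.annulus}. Hence the whole quotient equals
$\lambda_1^\alpha(B_{R_2}) - \dfrac{R_1\,\phi_{R_2}(R_1)\phi_{R_2}'(R_1)}{\int_{R_1}^{R_2}\phi_{R_2}(r)^2 r\,\der r}$,
and since $\phi_{R_2}>0$ and $\phi_{R_2}'>0$ on $(0,R_2)$ (as $I_\mu$ and $I_{\mu+1}$ are positive, recall~\eqref{explicit} and~\eqref{AS.identity}), the subtracted term is nonnegative. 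Therefore $\mu_1^\alpha(A_{R_1,R_2}) \leq \lambda_1^\alpha(B_{R_2})$.

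The only point requiring a little care — and the one I expect to be the main obstacle — is the legitimacy of restricting $\phi_{R_2}$ to $[R_1,R_2]$: the test function in~\eqref{Rayleigh.annulus} must lie in $W^{1,2}((R_1,R_2))$, which is immediate since $\phi_{R_2}\in C^1([0,R_2])$, but one must make sure the boundary behaviour at the \emph{inner} radius causes no trouble. In~\eqref{Rayleigh.annulus} the inner circle carries a Neumann condition, so there is no boundary term at $R_1$ in the form, and the restricted function need not satisfy $\psi'(R_1)=0$; the computation above already accounts for the resulting nonzero boundary term $R_1\phi_{R_2}(R_1)\phi_{R_2}'(R_1)$, and its sign is exactly what we need. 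One should also note $\alpha\le 0$ is used only implicitly (through positivity/monotonicity of~$\phi_{R_2}$ guaranteed by Lemma~\ref{Lem.monotonicity} and~\eqref{explicit}); for $\alpha=0$ the statement is trivial since both sides are zero. A brief remark that $\phi_{R_2}'>0$ on $(0,R_2)$ follows from $\phi_{R_2}(r)=r^{-\mu}I_\mu(kr)$ together with~\eqref{AS.identity} giving $\phi_{R_2}'(r)=k\,r^{-\mu}I_{\mu+1}(kr)>0$ will close the argument.
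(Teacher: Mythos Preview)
Your proof is correct and follows essentially the same route as the paper: restrict the disk eigenfunction~$\phi_{R_2}$ to $[R_1,R_2]$, use it as a test function in~\eqref{Rayleigh.annulus}, integrate by parts, and observe that the leftover boundary term $R_1\phi_{R_2}(R_1)\phi_{R_2}'(R_1)$ has the right sign. The only minor difference is how that sign is established: you read $\phi_{R_2}'(r)=k\,r^{-\mu}I_{\mu+1}(kr)>0$ directly from the explicit Bessel representation~\eqref{explicit} and~\eqref{AS.identity}, whereas the paper argues more intrinsically via the ODE, noting that $[r\phi_{R_2}\phi_{R_2}']'=-\lambda_1^\alpha(B_{R_2})\,r\phi_{R_2}^2+r(\phi_{R_2}')^2\geq 0$ (since $\lambda_1^\alpha(B_{R_2})\leq 0$) so that $r\phi_{R_2}\phi_{R_2}'$ is nondecreasing from its value~$0$ at $r=0$.
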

\begin{proof}
By symmetry,
$\lambda_1^\alpha(B_{R_2})$ is the smallest solution of
the one-dimensional boundary-value problem
\begin{equation}
\left\{
\begin{aligned}
  -r^{-1}[r \phi'(r)]' &= \lambda \;\! \phi(r) \,,
  && r \in [0,R_2] \,,
  \\
  \phi'(0) &= 0 \,,
  \\
  \phi'(R_2) + \alpha \;\! \phi(R_2) &=0 \,.
\end{aligned}
\right.
\end{equation}
The associated eigenfunction~$\phi_1$ can be chosen to be positive
and normalised to one in $\sii((0,R_2),r\,\der r)$.
Using~$\phi_1$ as a test function in~\eqref{Rayleigh.annulus}
and integrating by parts, we obtain
\begin{equation}\label{test.bound}
  \mu_1^\alpha(A_{R_1,R_2})
  \leq \lambda_1^\alpha(B_{R_2}) - R_1 \, \phi(R_1) \, \phi'(R_1)
  \,.
\end{equation}
At the same time, using the differential equation in~\eqref{elliptic},
we have
\begin{equation*}
  [r \, \phi_1(r) \, \phi_1'(r)]'
  = -\lambda_1^\alpha(B_{R_2}) \, r \, \phi_1(r)^2 + r \, \phi_1'(r)^2
  \geq 0
\end{equation*}
for all $r \in [0,R_2]$,
where the inequality follows from the fact
that $\lambda_1^\alpha(B_{R_2})$ is non-positive, 
\cf~\eqref{bound.trivial}.
Hence, $r \mapsto r \, \phi_1(r) \, \phi_1'(r)$
is non-decreasing and the proposition follows
as a consequence of~\eqref{test.bound}.
\end{proof}

Clearly the area of~$B_{R_2}$ is greater than the area of~$A_{R_1,R_2}$.
On the other hand, the perimeter of~$B_{R_2}$ is less than
the perimeter of~$A_{R_1,R_2}$, indeed
$$
  |\partial B_{R_2}| = L_0
  \,,
$$
the outer perimeter of~$\Omega$.
Hence, Theorem~\ref{Thm.better} together with Proposition~\ref{Prop.test}
immediately implies Theorem~\ref{Thm.perimeter}
for simply connected domains~$\Omega$, when $L_\mathrm{tot}=L_0$.

To conclude the proof of Theorem~\ref{Thm.perimeter}
in the general case of possibly multiply connected domains,
we recall the monotonicity result of Theorem~\ref{Thm.monotonicity}.
Consequently,
\begin{equation}\label{bound.consequence}
  \lambda_1^\alpha(B_{R_2})
  \leq \lambda_1^\alpha(B_{R_3})
  \,, \qquad \mbox{where} \qquad
  r_3 := \frac{L_\mathrm{tot}}{2\pi}
  \,,
\end{equation}
for all $\alpha \leq 0$ (the statement is trivial for $\alpha=0$),
where~$R_3$ is chosen in such a way that~$B_{R_3}$
has the same perimeter as~$\Omega$.
Summing up, Theorem~\ref{Thm.perimeter} is proved
as a consequence of Theorem~\ref{Thm.better}, Proposition~\ref{Prop.test}
and~\eqref{bound.consequence}.
\hfill\qed

\subsection{Comments on Theorem~\ref{Thm.perimeter}}
Combining Theorem~\ref{Thm.perimeter} with Theorem~\ref{Thm.ballbounds},
we get an explicit upper bound
$$
  \lambda_1^\alpha(\Omega) < -\alpha^2 + \frac{2\pi}{|\partial\Omega|} \, \alpha
$$
for every $\alpha<0$ and all bounded planar domains~$\Omega$ of class~$C^2$.
>From Theorem~\ref{Thm.better} and Proposition~\ref{Prop.test},
we know that the total perimeter $|\partial\Omega|=L_\mathrm{tot}$
can be replaced by the outer perimeter~$L_0$.

\section{Numerical results}
%
\subsection{Extremal domains}
In this section we present the main results that we gathered for the optimisation of Robin eigenvalues with negative parameter.
In all the numerical simulations we considered domains with unit area and the eigenvalues were calculated using the method of
fundamental solutions~\cite{Alves-Antunes_2005,Alves-Antunes_2013}. 
The maximisation of Robin eigenvalues was solved by a gradient type method involving Hadamard shape derivatives which allows to
minimise a sequence of functionals
\[
  \mathcal{F}_m(\Omega)
  := -\lambda_n^\alpha(\Omega)+c_m\left(|\Omega|-1\right)^2
\]
for a gradually increasing sequence of penalty parameters $c_m$. 

We assume that $\lambda_n \equiv\lambda_n^\alpha(\Omega)$ is simple, 
$u$~is an associated normalised real-valued eigenfunction 
and use the notation $\Omega(t):=(I+tV)(\Omega)$, 
where~$I$ is the identity and~$V$ is a given deformation field. 
The Hadamard shape derivative for simple Robin eigenvalues
is given by (see, \eg, \cite[Ex.~3.5]{Henry})
\[
  \frac{\partial}{\partial t}\lambda_n^\alpha(\Omega(t))\Big|_{t=0}
  = \int_{\partial\Omega} \left[
  |\nabla_{\partial\Omega}u|^2
  -\left(\lambda_n^\alpha (\Omega)+\alpha^2-H\alpha\right)u^2
  \right]
  V \cdot\nu
  \,,
\]
where $H :=\divergence \nu$ is the mean curvature of~$\partial\Omega$ 
and $\nabla_{\partial\Omega}u$ is the tangential component of the gradient of~$u$.
We note that special variants of this formula 
for homothetic deformations of balls 
can be found in Section~\ref{Sec.monotonicity}.

In the optimisation of each of the eigenvalues $\lambda_n$ we considered the
case of connected and disconnected domains with up to~$n$ connected components. 
In the latter the optimisation was performed at all the
connected components. At the end, the optimal eigenvalue was the maximal eigenvalue obtained from all the cases.

Figure~\ref{fig:lambda1}-left shows the results for the maximisation of the
first Robin eigenvalue as a function of the Robin parameter $\alpha$. In blue we plot the first eigenvalue of the disk of unit area
while similar results for the maximal eigenvalue among annuli are shown in red. We will denote by $A_{n,\alpha}^\ast$ the
maximiser of the $n$-th eigenvalue in the class of the annuli of unit area, for a given Robin parameter $\alpha$. Although the two
graphs are quite close, it does turn out that while the disk is the (unique) global maximiser for $\alpha\in(\alpha_1^\ast,0)$ with
$\alpha_1^\ast\approx-7.2875$, there is a transition at $\alpha=\alpha_{1}^{\ast}$ where this role is then
taken by $A_{1,\alpha}^\ast$. For $\alpha=\alpha_1^\ast$ we have non-uniqueness of the maximiser.
In the right plot of the same figure we show the difference $\lambda_1\left(A_{1,\alpha}^\ast\right)-\lambda_1(B)$, for the
region~$\alpha\in[-7.5,-7.15]$. 
\begin{figure}[ht]
\centering
(a)\includegraphics[width=0.44\textwidth]{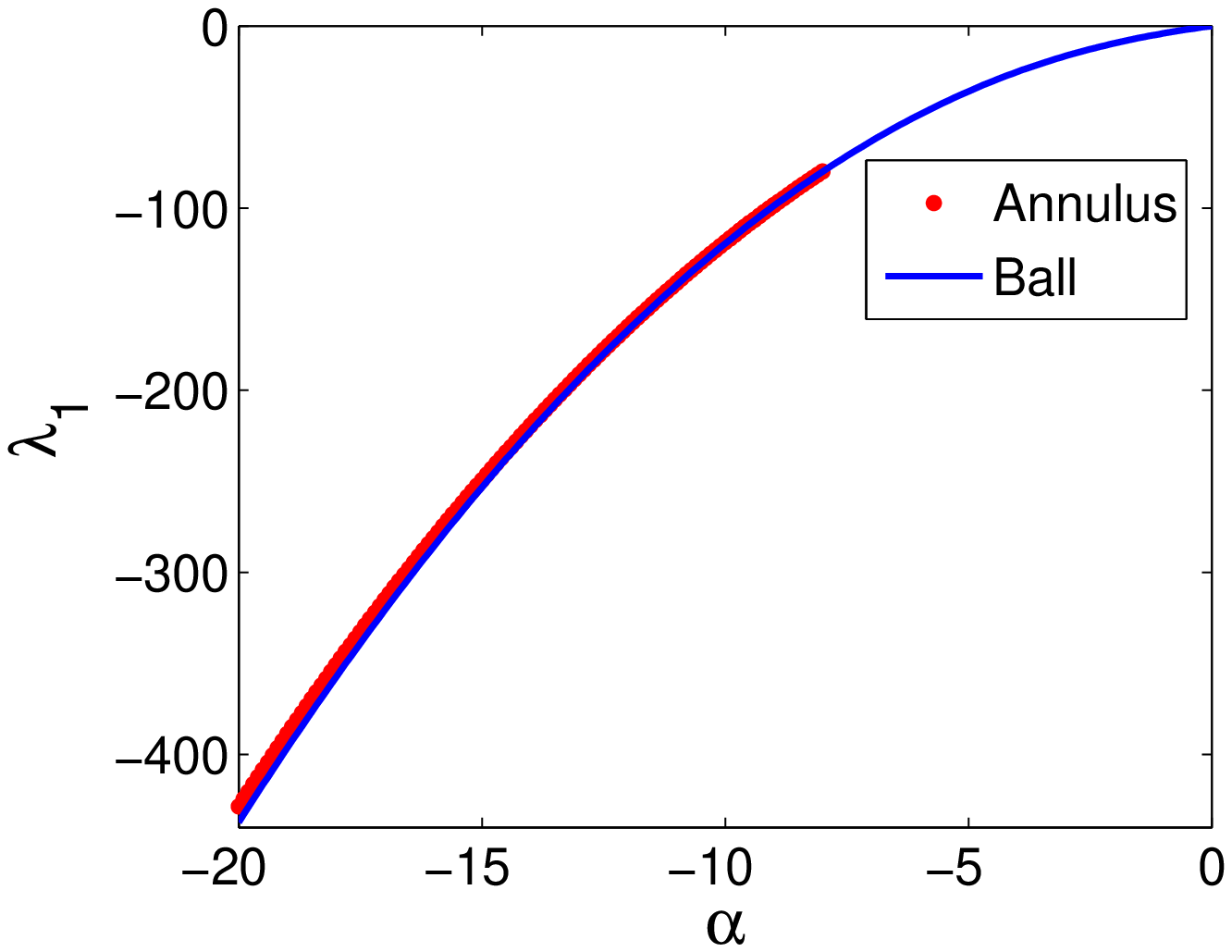}
(b)\includegraphics[width=0.44\textwidth]{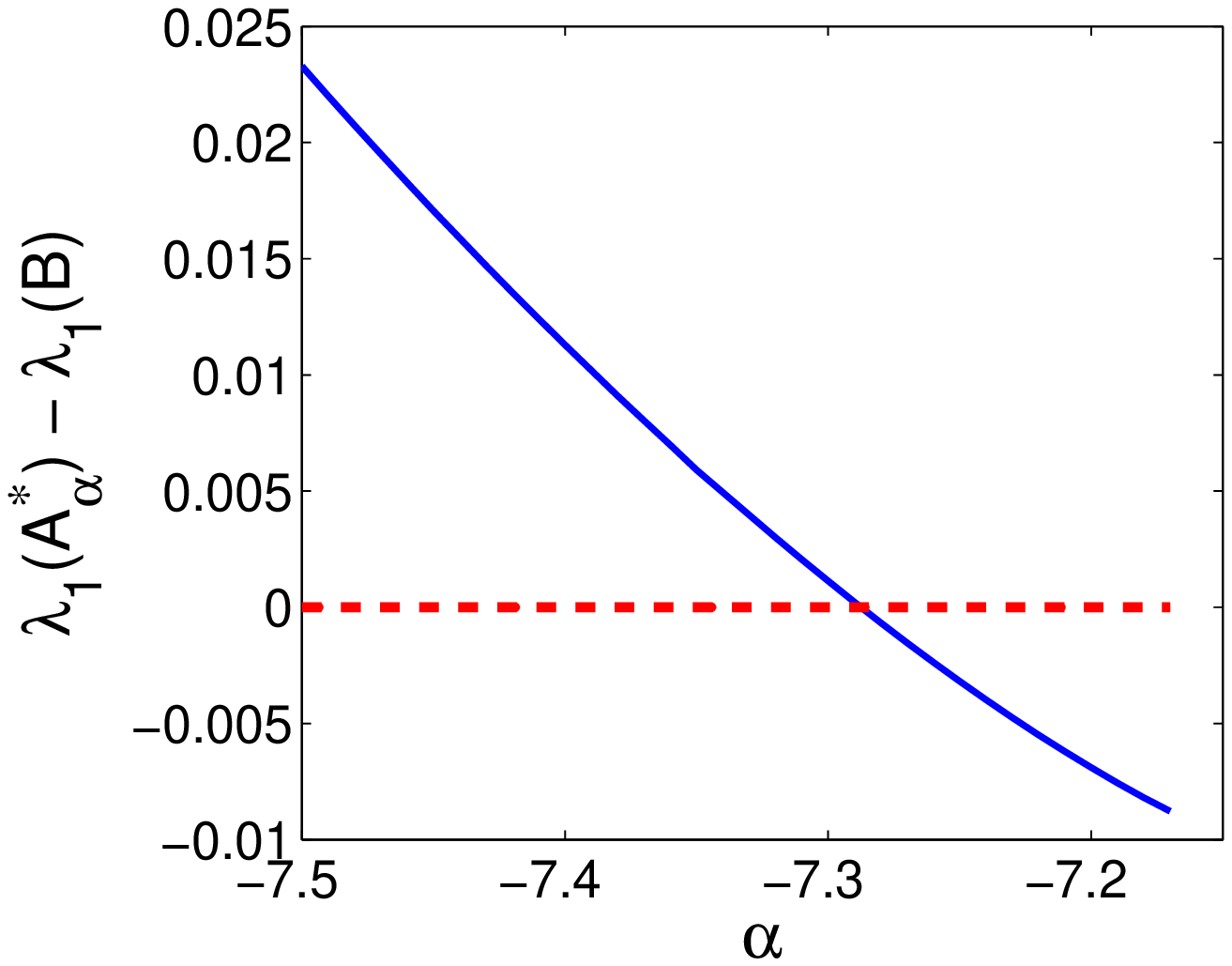}
\caption{(a) Optimal first Robin eigenvalue. The maximiser is an annulus for $\alpha<\alpha_1^\ast\approx-7.2875$ and the ball for
$\alpha<\alpha_1^\ast\leq0$. (b) Plot of the difference $\lambda_1\left(A_{1,\alpha}^\ast\right)-\lambda_1(B)$, for $\alpha\in[-7.5,-7.15]$.}
\label{fig:lambda1}
\end{figure}

In Figure~\ref{fig:lambda2} we plot the maximal second Robin eigenvalue. Again, the maximiser is an annulus $A_{2,\alpha}^\ast$ for
$\alpha<\alpha_2^\ast\approx-6.4050$ and the ball for $\alpha>\alpha_2^\ast$.

\begin{figure}[ht]
\centering
\includegraphics[width=0.46\textwidth]{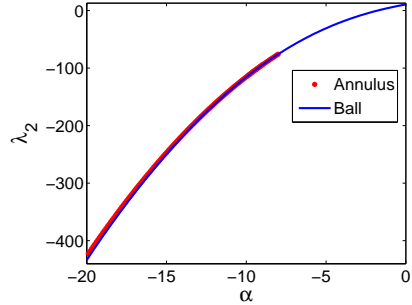}
\caption{Optimal second eigenvalue. The maximiser is an annulus $A_{2,\alpha}^\ast$ for $\alpha<\alpha_2^\ast\approx-6.4050$ and the ball for $\alpha>\alpha_2^\ast$.}
\label{fig:lambda2}
\end{figure}

In Figure~\ref{fig:lambda1rad} we plot the inner radius $R_1$ of the optimal annuli $A_{1,\alpha}^\ast$ and $A_{2,\alpha}^\ast$, as a
function of $\alpha$. We observe that in both cases the inner radius decreases with the decrease in $\alpha$.

\begin{figure}[ht]
\centering
\includegraphics[width=0.46\textwidth]{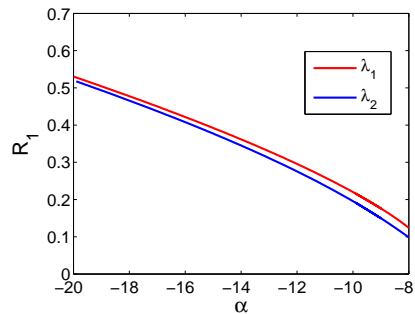}
\caption{Inner radius of the optimal annuli $A_{1,\alpha}^\ast$ and $A_{2,\alpha}^\ast$, as a function of $\alpha$.}
\label{fig:lambda1rad}
\end{figure}

In Figure~\ref{fig:lambda3} we plot the optimal third eigenvalue for some Robin parameters $\alpha$. For a comparison, we include also the third
eigenvalue of the ball and of the union of two balls of the same area. In this case, our numerical results suggest that the maximisers are always
connected, for an arbitrary $\alpha<0$. Moreover, these maximisers degenerate to two balls for $\alpha=0$, which is the conjectured maximiser
in the case of Neumann boundary conditions 
-- see~\cite{Girouard-Nadirashvili-Polterovich_2009} 
where it is shown that the third eigenvalue of
simply connected domains with Neumann boundary conditions 
never exceeds this value. 
In the right plot of the same figure we show the maximisers for
$\alpha=-14,-8,-1$.

\begin{figure}[ht]
\centering
\includegraphics[width=0.44\textwidth]{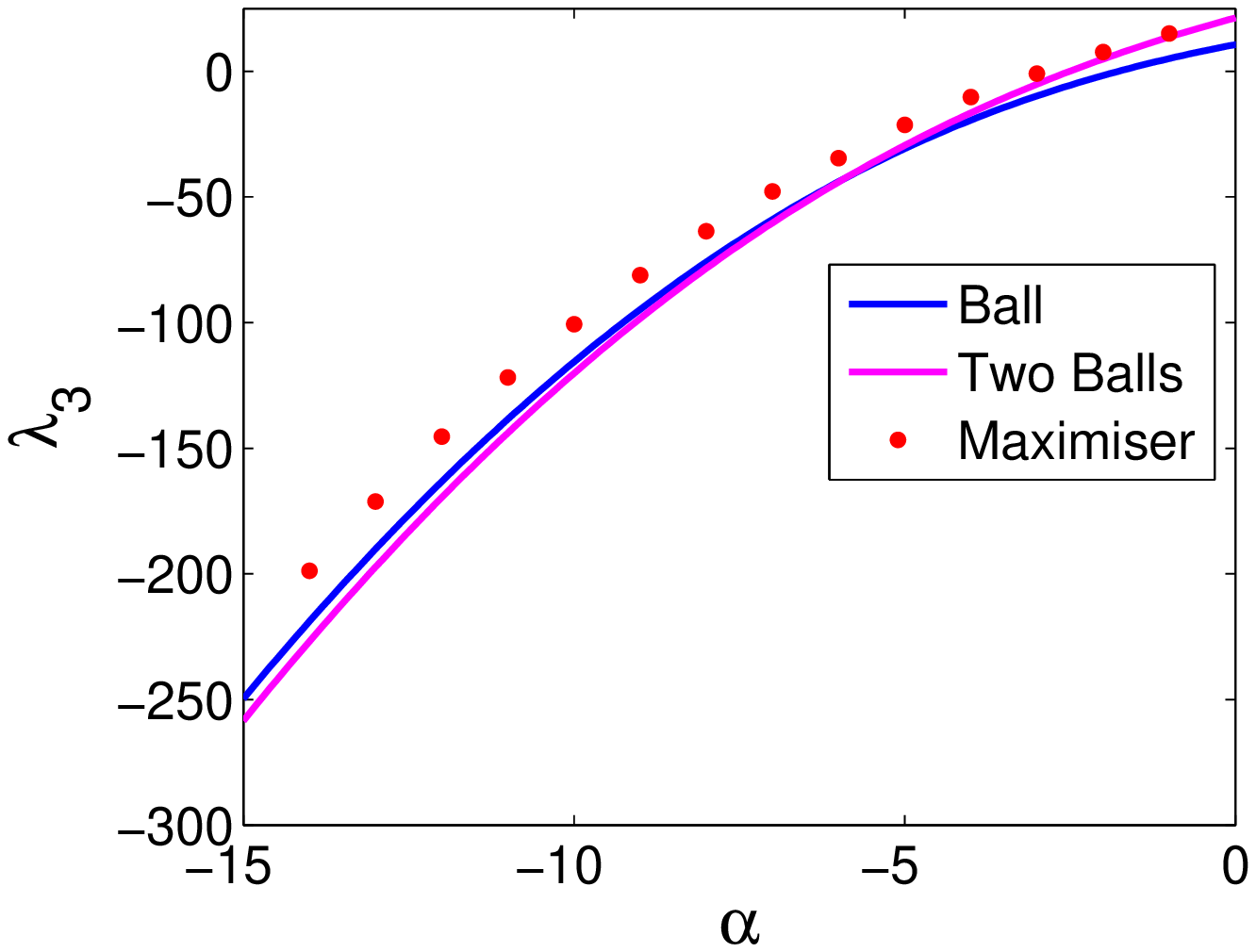}
\includegraphics[width=0.44\textwidth]{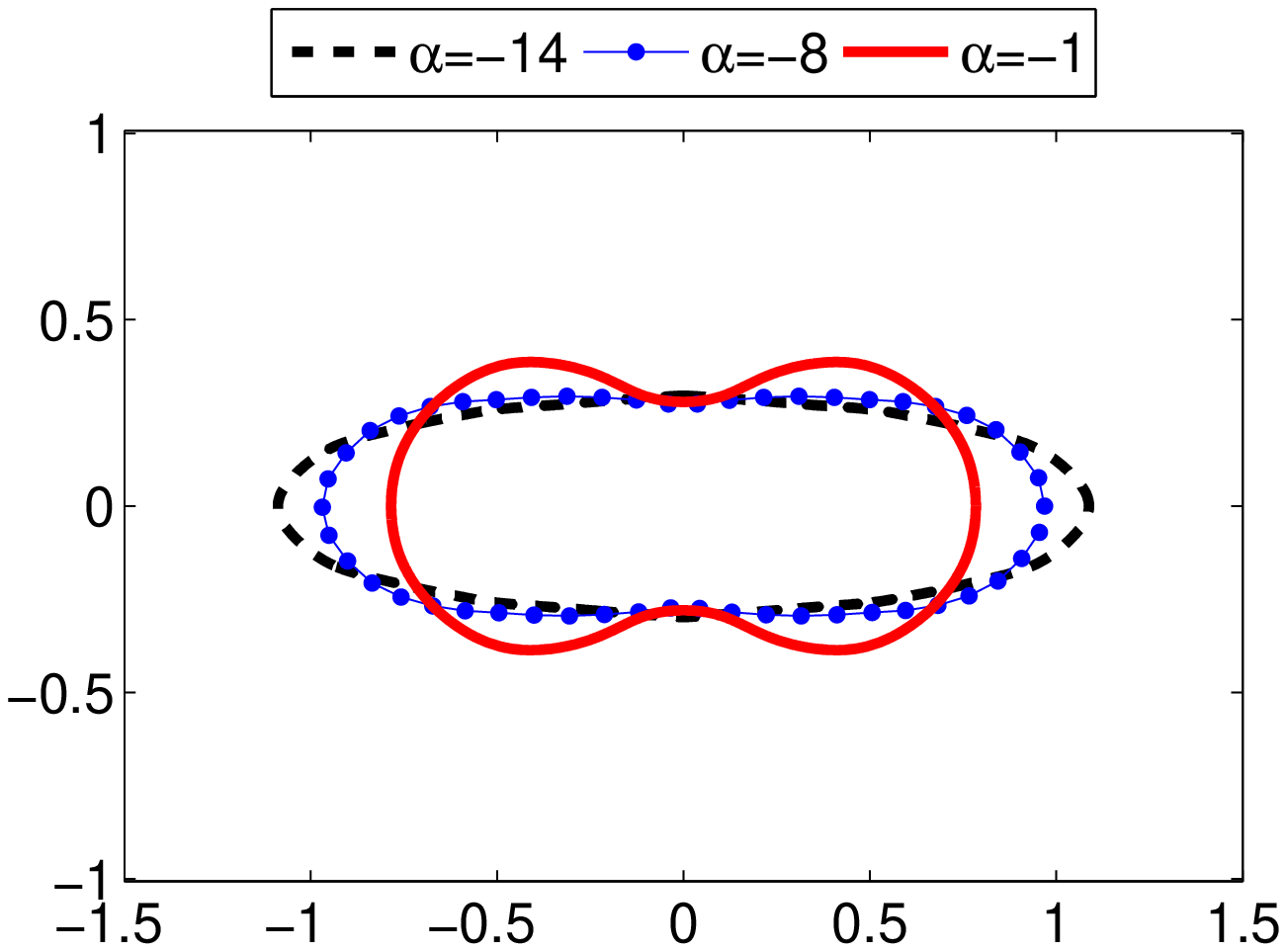}
\caption{(a) The optimal third eigenvalue, together with the third eigenvalue of the ball and of the union of two balls of the same area. (b)
The maximisers for $\alpha=-14,-8,-1$.}
\label{fig:lambda3}
\end{figure}

In Figure~\ref{fig:lambda4} we show the numerical maximisers obtained for
$\alpha=-0.25,-4$, $-8,-13$.

\begin{figure}[ht]
\centering
\includegraphics[width=0.44\textwidth]{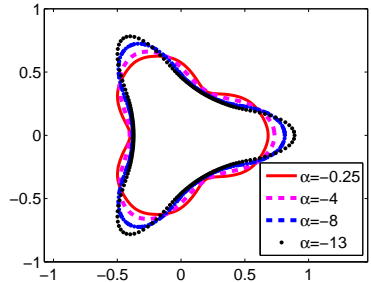}
\caption{Maximisers for the fourth eigenvalue for $\alpha=-13,-8-,4,-0.25$.}
\label{fig:lambda4}
\end{figure}

We shall now present the numerical results obtained 
for three-dimensional domains. We denote by $A_{n,\alpha}^\ast$ the maximiser of
the $n$-th eigenvalue within the class of spherical shells of unit volume, 
for a given value of the Robin parameter $\alpha$.
In Figure~\ref{fig:lambda1-3D} we plot the maximal first and second Robin eigenvalues. 
In this case, the ball is the maximiser of $\lambda_n$,
$n=1,2$, for $\alpha\in(\alpha_n^\ast,0)$, 
while for $\alpha<\alpha_n^\ast$ the maximiser becomes $A_{n,\alpha}^\ast$, 
where the values
of $\alpha_n^\ast$ obtained numerically 
are $\alpha_n^\ast\approx-1.7149$ and $\alpha_2^\ast\approx-5.6637$. 
We also considered the maximisation of the first Robin eigenvalue among domains with a given surface area. In this case our numerical results suggest that the ball is always the maximiser.

\begin{figure}[ht]
\centering
(a)\includegraphics[width=0.44\textwidth]{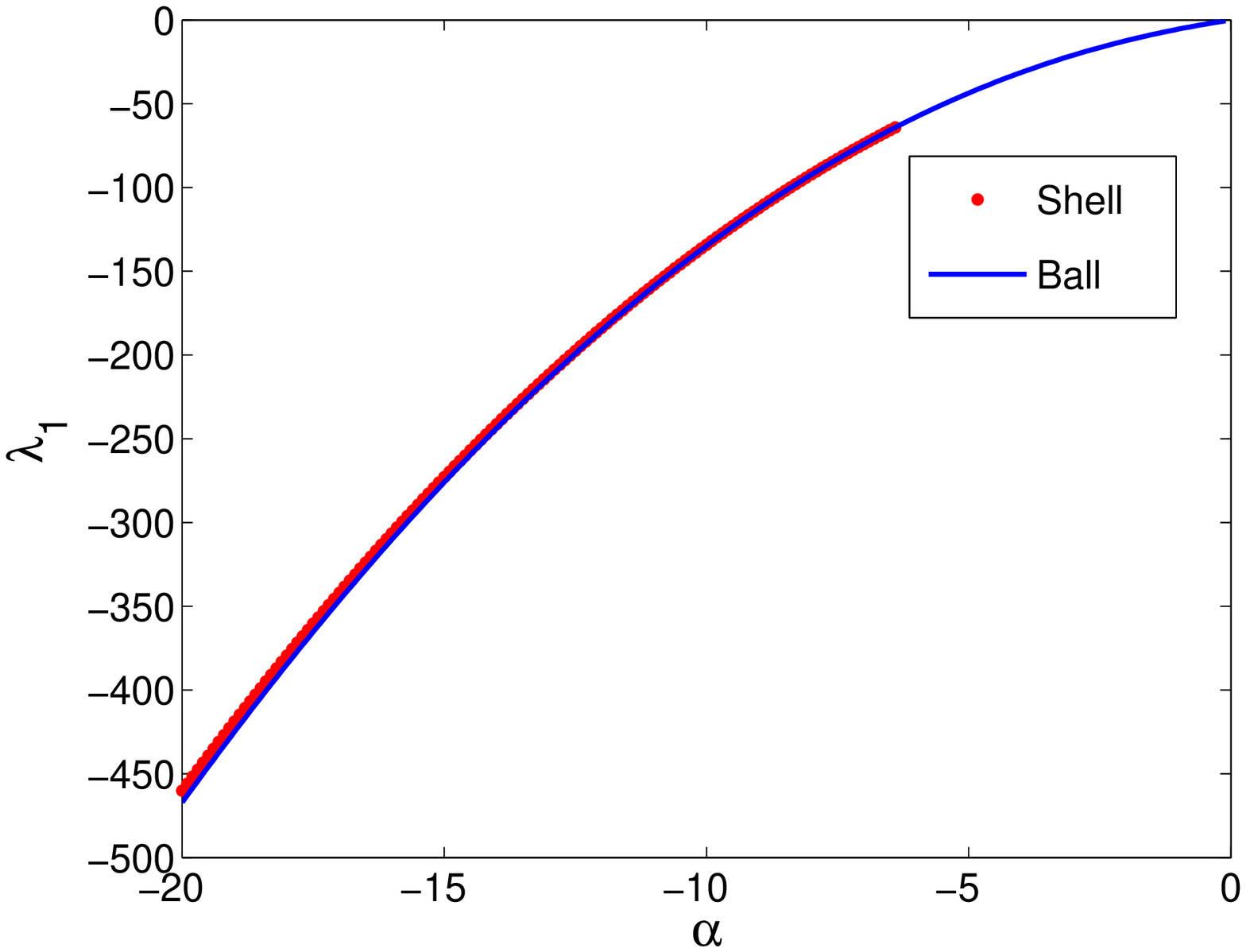}
(b)\includegraphics[width=0.44\textwidth]{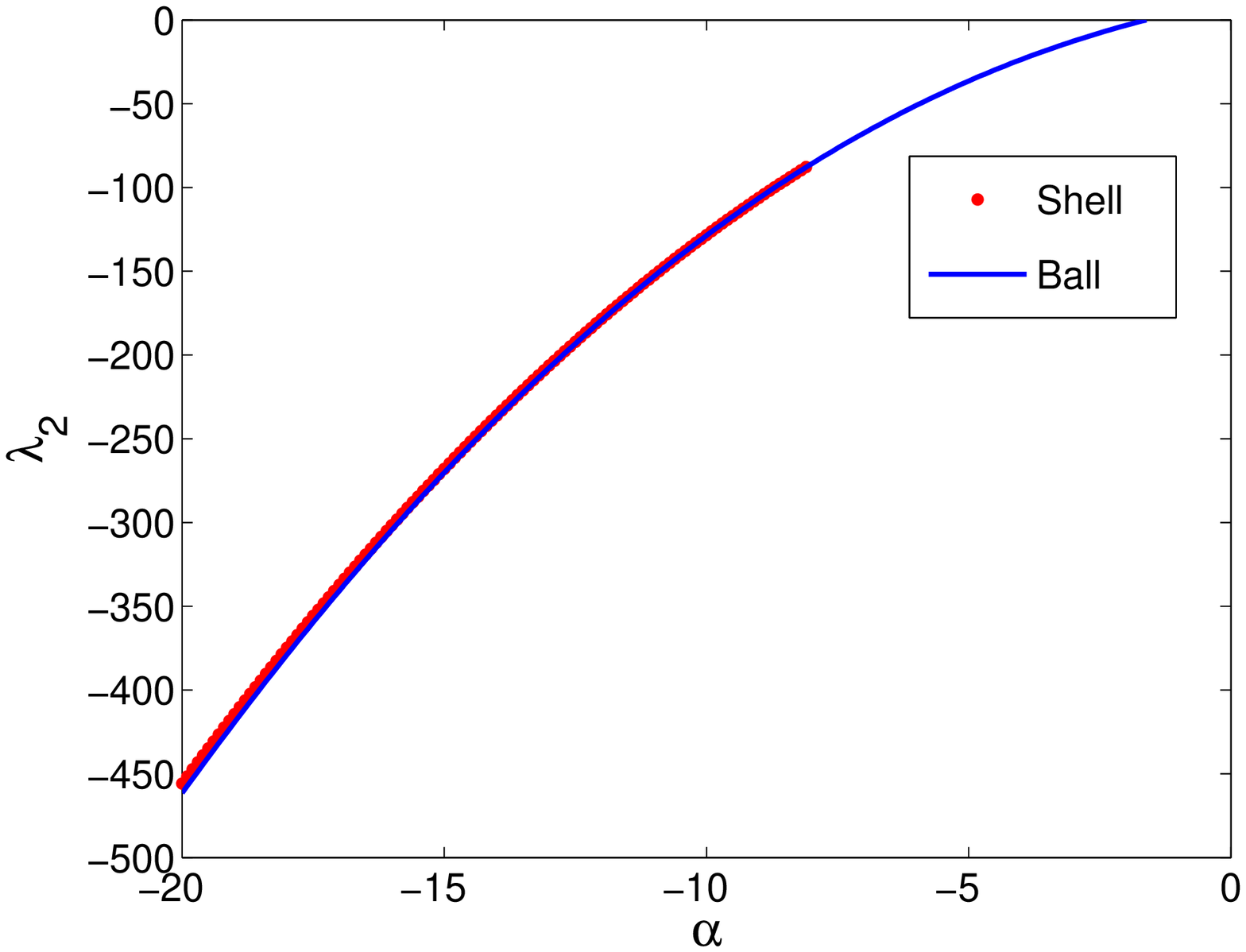}
\caption{(a) Optimal first (left plot) and second (right plot) 
Robin eigenvalues for three-dimensional domains.}
\label{fig:lambda1-3D}
\end{figure}

Finally, and in order to understand the behaviour of the bifurcation point where the switching between balls and shells takes place, we
analysed numerically the equations determining the eigenvalues of these two domains where we now take the dimension variable
to change continuously between $2$ and $6$. From Figure~\ref{fig:alphacrit} we see that both the critical value where the bifurcation occurs
and the corresponding
value of the inner radius increase as the dimension increases.

\begin{figure}[ht]
\centering
(a)\includegraphics[width=0.44\textwidth]{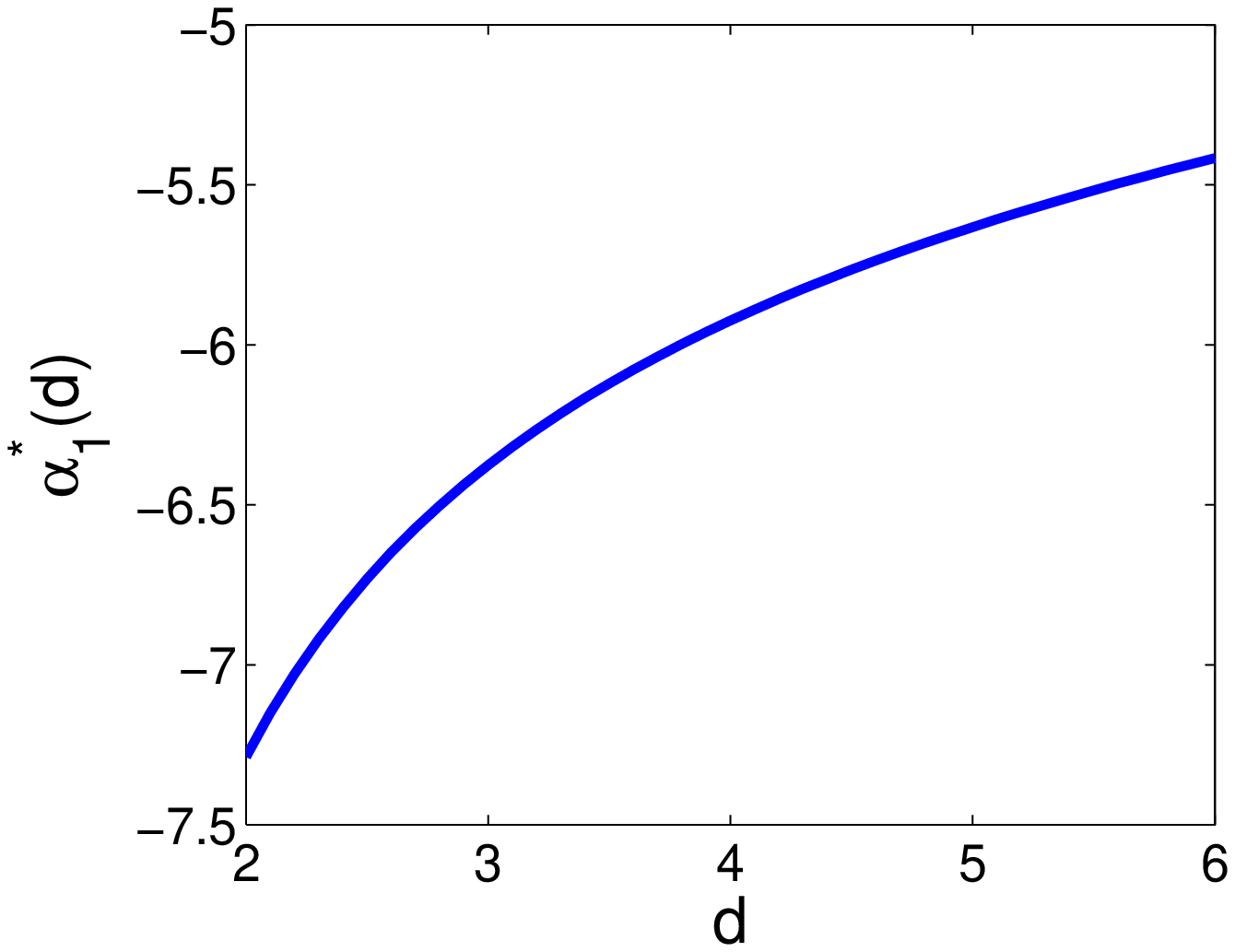}
(b)\includegraphics[width=0.44\textwidth]{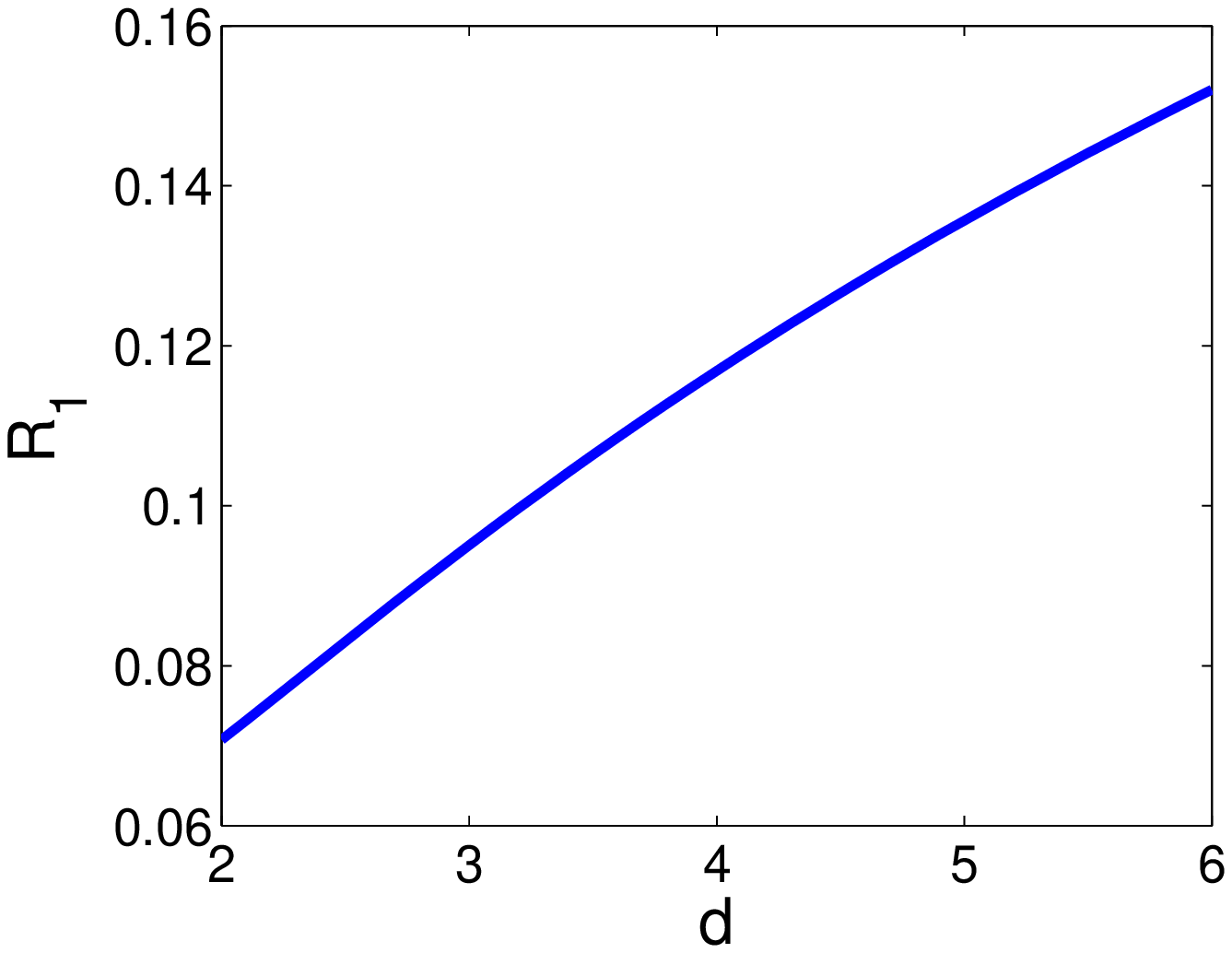}
\caption{(a) Critical value of $\alpha_1^\ast$
where the first eigenvalue of balls becomes smaller than that of the optimal shell with the same volume.
(b) Corresponding value of the smallest radius at the bifurcation.}
\label{fig:alphacrit}
\end{figure}

\subsection{Evaluation of upper bounds}
In this section we test the bounds provided by Theorems~\ref{Thm.bound},~\ref{Thm.perimeter} and~\ref{Thm.better}. For a given domain $\Omega$ we define the percentage errors associated with
these bounds respectively by
\begin{align*}
  P_1
  &:= 100 \, \frac{\left|\lambda_1^\alpha(\Omega)-\frac{F(\Omega)}{F(B)} \, \lambda_1^{\tilde{\alpha}}(B)\right|}{\left|\lambda_1^\alpha(\Omega)\right|}, \,
  \\
  P_2 
  &:= 100 \, \frac{\left|\lambda_1^\alpha(\Omega)
  -\lambda_1^\alpha\big(B_{\frac{|\partial\Omega|}{2\pi}}\big)\right|}
  {\left|\lambda_1^\alpha(\Omega)\right|}
  \,,
  \\
  P_3
  &:=100 \, \frac{\left|\lambda_1^\alpha(\Omega)-\mu_1^\alpha(A_{R_1,R_2})\right|}{\left|\lambda_1^\alpha(\Omega)\right|}  
  \,.
\end{align*}
In order to illustrate the accuracy of the bounds we shall consider ellipses, ellipsoids, rectangles and parallelepipeds.

Figure~\ref{fig:bound2del} shows the percentage errors $P_1$, $P_2$ and $P_3$ obtained in the class of ellipses as a function of the
eccentricity, for $\alpha=-10,-1$.

\begin{figure}[ht]
\centering
\includegraphics[width=0.48\textwidth]{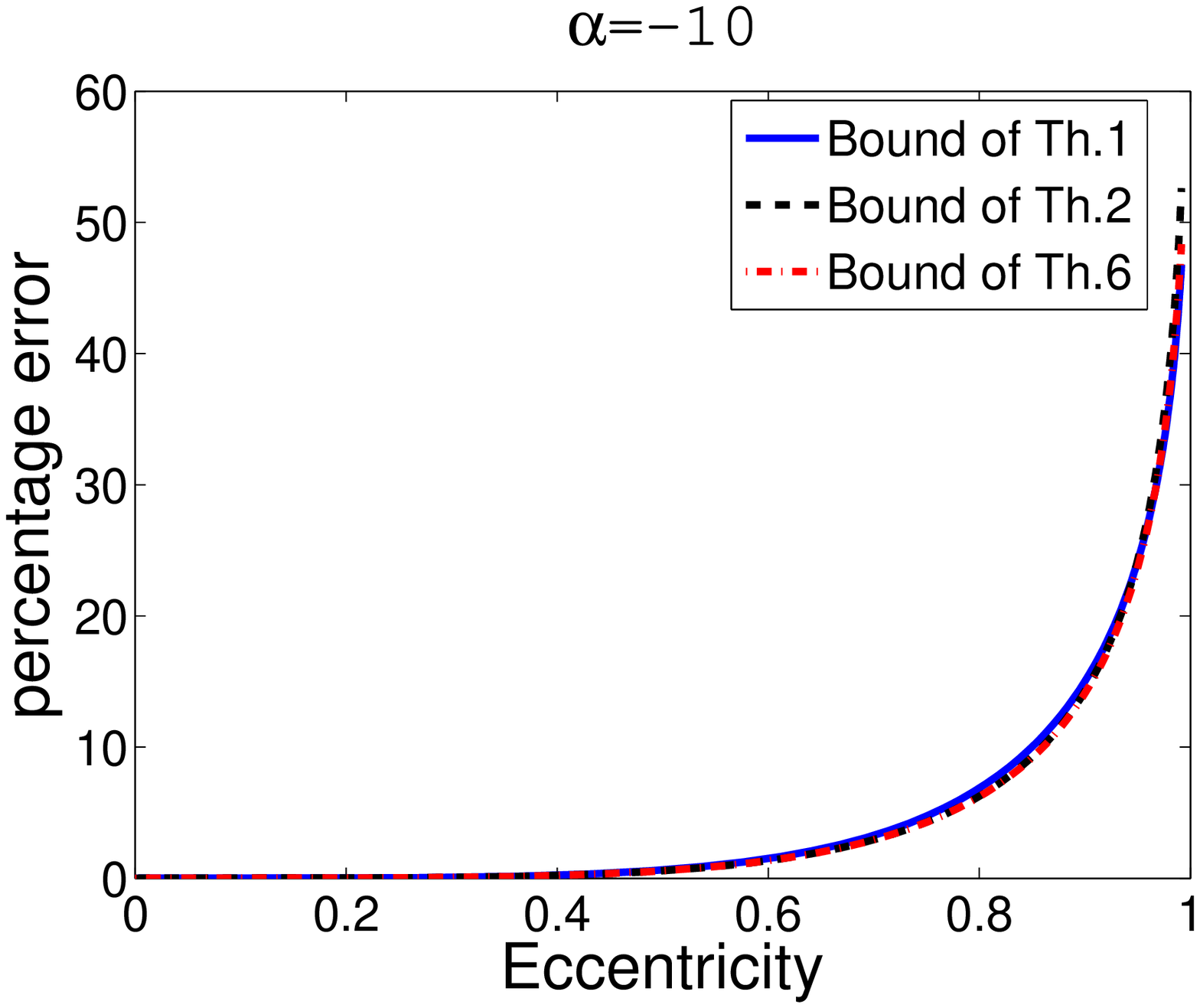}
\includegraphics[width=0.48\textwidth]{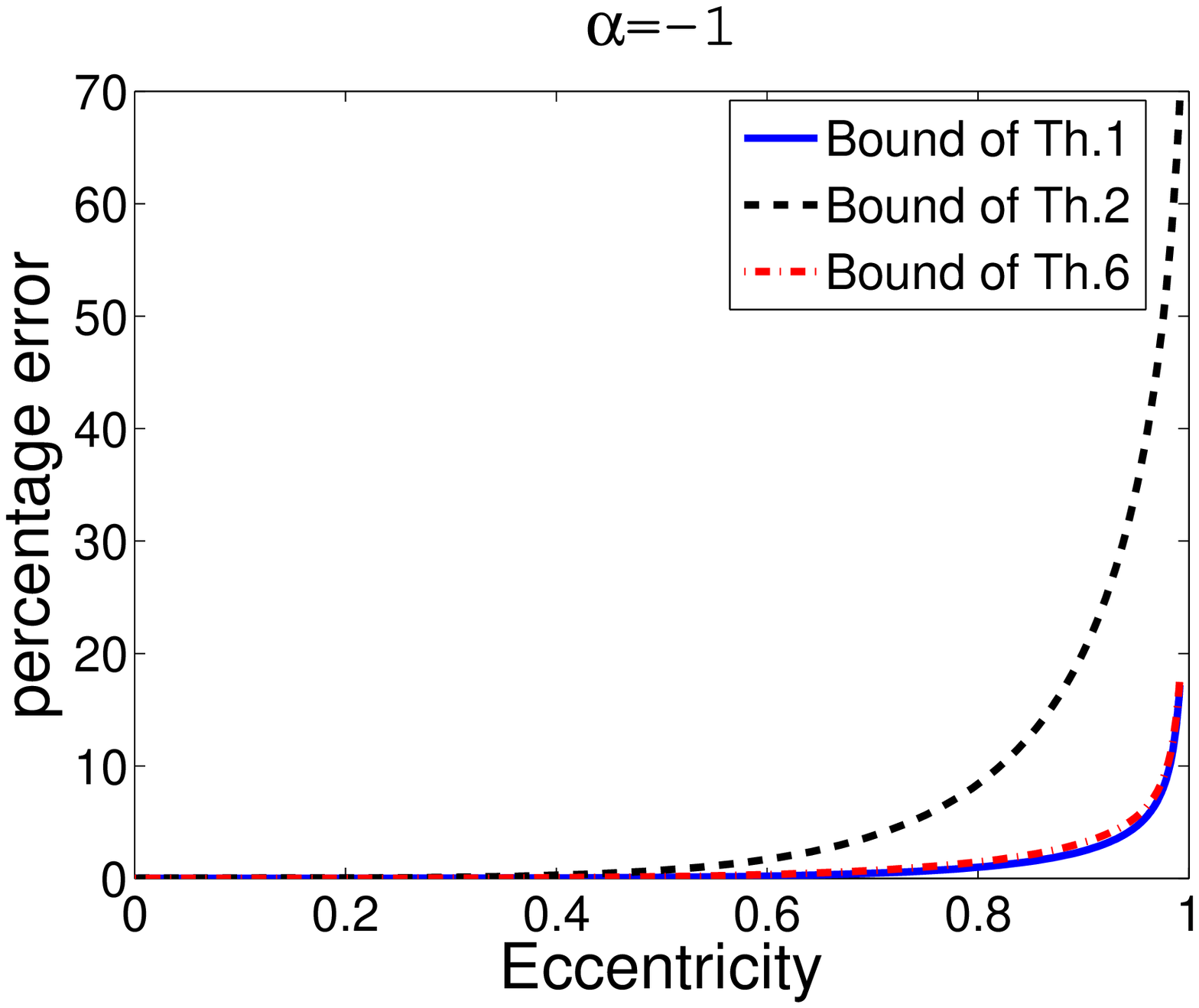}
\caption{Plots of the percentage errors associated to the bounds of Theorems~\ref{Thm.bound},~\ref{Thm.perimeter} and~\ref{Thm.better} in the class of the ellipses, as a function of the eccentricity, for $\alpha=-10,-1$.}
\label{fig:bound2del}
\end{figure}

Figure~\ref{fig:bound2d} shows the percentage errors $P_1$ obtained in the class of the rectangles  for $\alpha=-10,-5,-1$, as a function of $Q:=1-1/L^2$, where $L$ is
the length of the largest side of the rectangle. Note that the percentage errors of the bound of Theorem~\ref{Thm.bound} converge to zero, as $Q$ converge to 1. This
means that besides the balls, the bound of Theorem~\ref{Thm.bound} gives also equality asymptotically for thin rectangles.
This numerical observation is consistent with the analysis
made in Section~\ref{Sec.comp}. 
Indeed, from~\eqref{comp1} and~\eqref{comp2} there
it follows that $P_1 = o(1)$ as $L \to \infty$,
while keeping the area of the rectangle equal to one.

\begin{figure}[ht]
\centering
\includegraphics[width=0.48\textwidth]{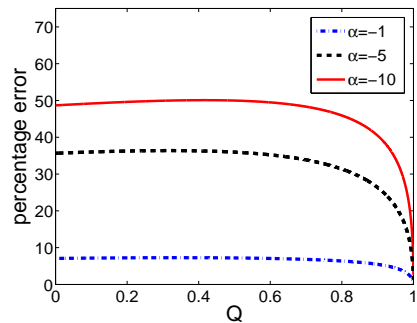}
\caption{Plot of the percentage error associated to the bound of Theorem~\ref{Thm.bound} in the class of the rectangles, as a function of $Q$.}
\label{fig:bound2d}
\end{figure}

In the three-dimensional case, 
for a given ellipsoid with semi-axes lengths $0<a\leq b\leq c$, 
we define the quantities
$\epsilon_1:=\sqrt{1-(b/c)^2}$
and $\epsilon_2:=\sqrt{1-(a/c)^2}$, 
for which we have
$0\leq\epsilon_1\leq\epsilon_2<1$. In Figure~\ref{fig:bound3d} we plot the percentage errors obtained for ellipsoids, as a function of $\epsilon_1$
and $\epsilon_2$, for $\alpha=-10,-5,-1$.

\begin{figure}[ht]
\centering
\includegraphics[width=0.32\textwidth]{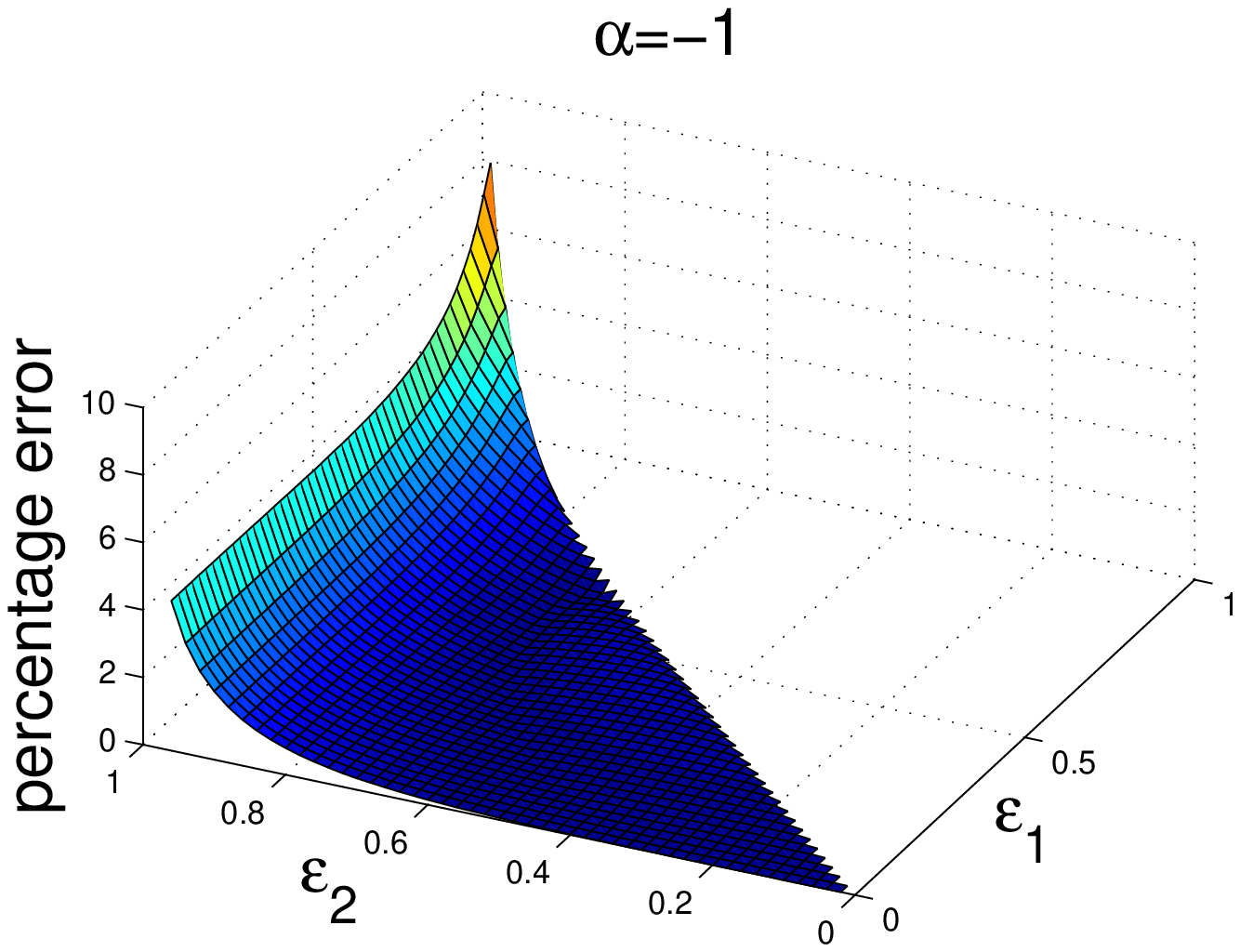}
\includegraphics[width=0.32\textwidth]{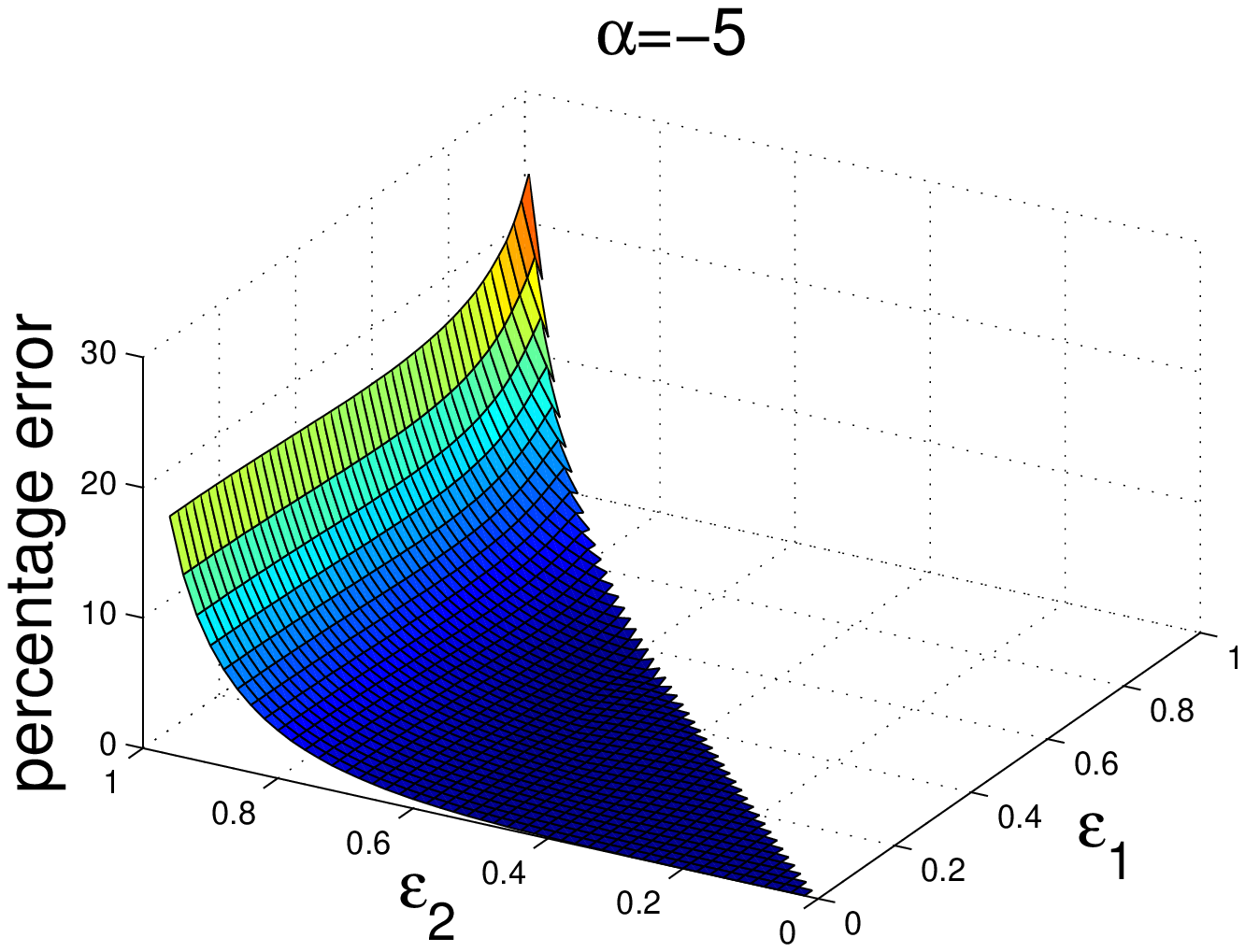}
\includegraphics[width=0.32\textwidth]{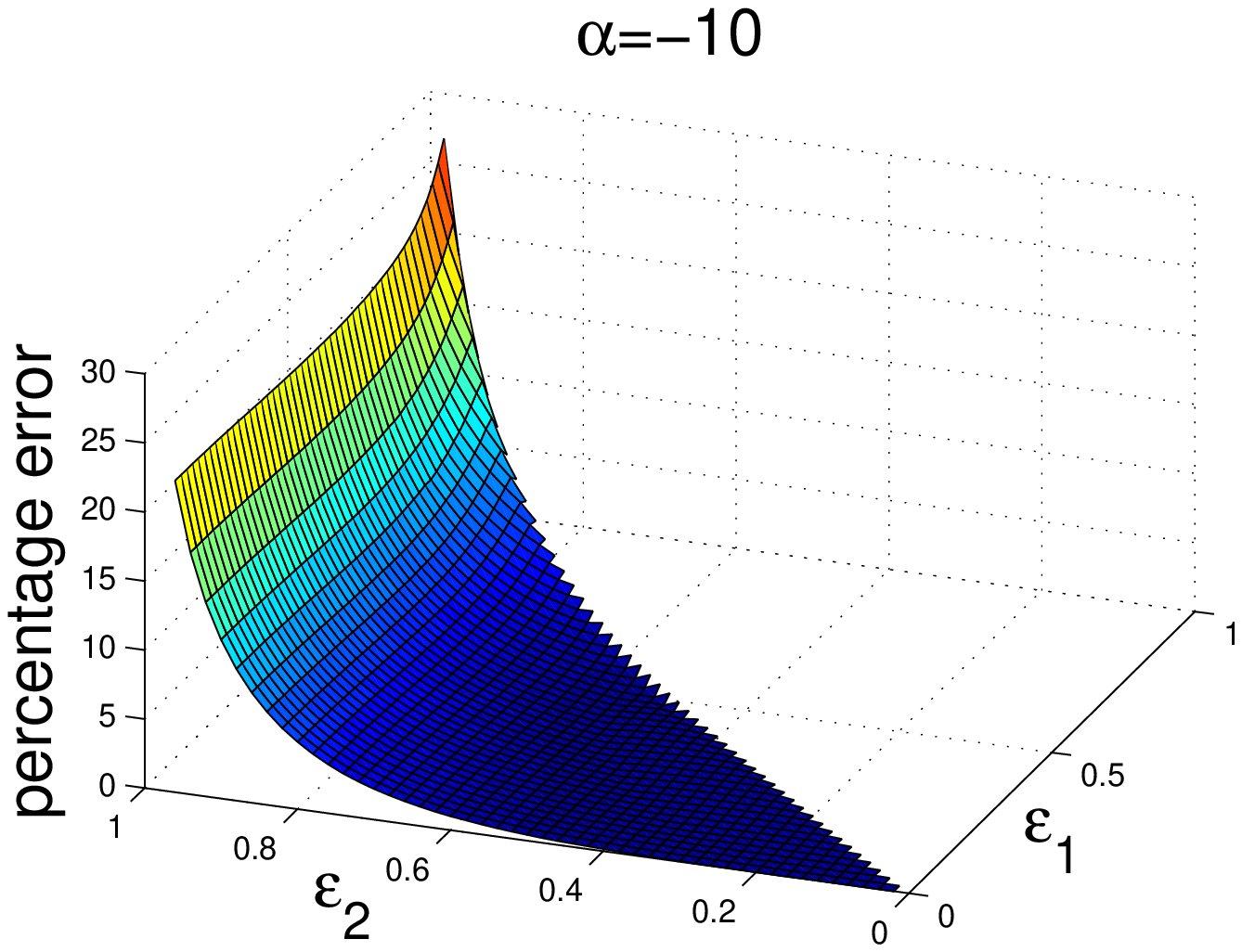}
\caption{Plot of percentage errors associated to the bound of Theorem~\ref{Thm.bound} in the class of ellipsoids.}
\label{fig:bound3d}
\end{figure}

Finally, we tested the bound of Theorem~\ref{Thm.bound} for  parallelepipeds. We will assume that each parallelepiped has length sides $l_1\leq l_2\leq l_3$ and define the
quantities $Q_1:=1-l_2/l_3$ and $Q_2 := 1-l_1/l_3$. 
Figure~\ref{fig:bound3dpar} shows
the percentage errors obtained in the class of parallelepipeds, 
as a function of $Q_1$ and $Q_2$.

\begin{figure}[ht]
\centering
\includegraphics[width=0.32\textwidth]{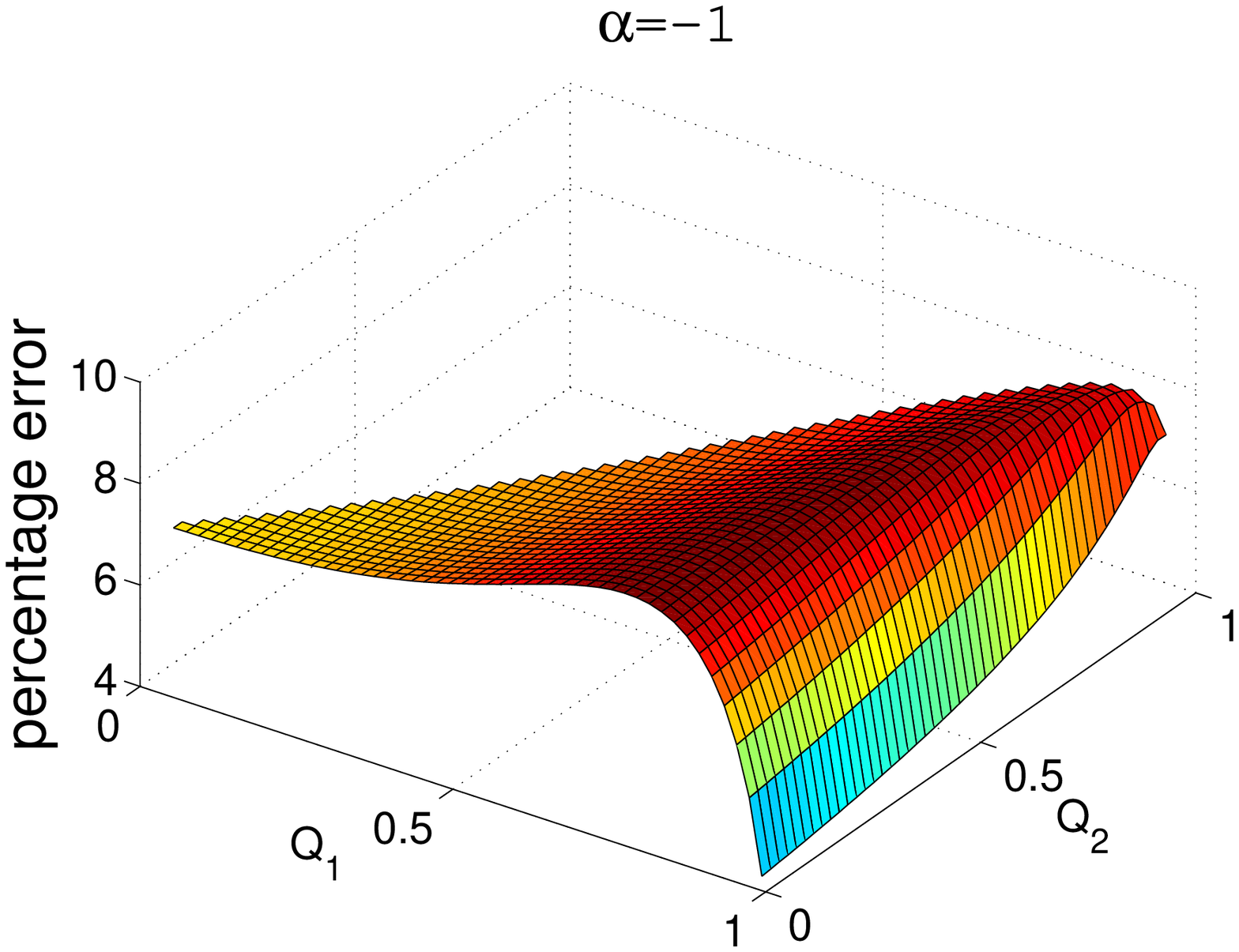}
\includegraphics[width=0.32\textwidth]{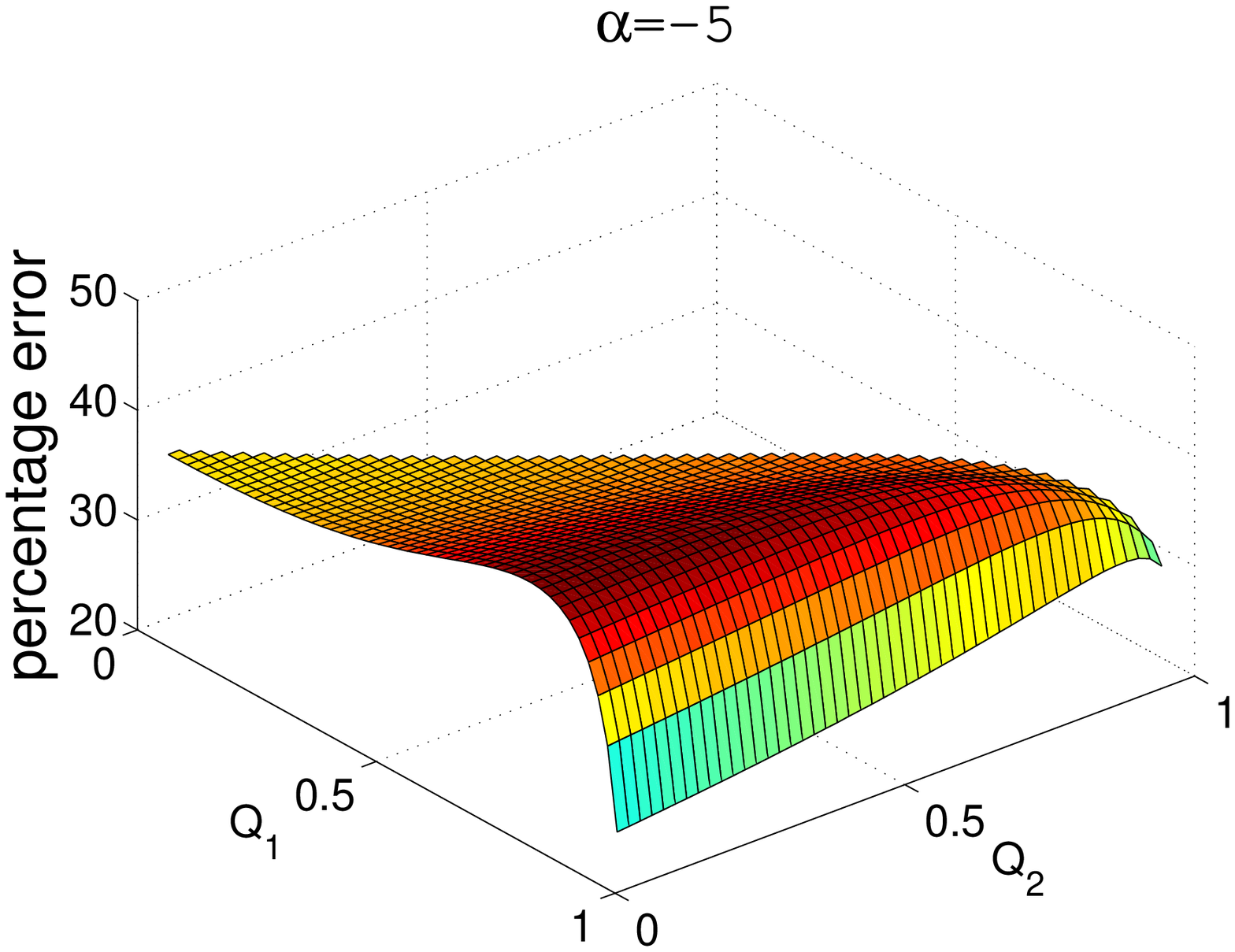}
\includegraphics[width=0.32\textwidth]{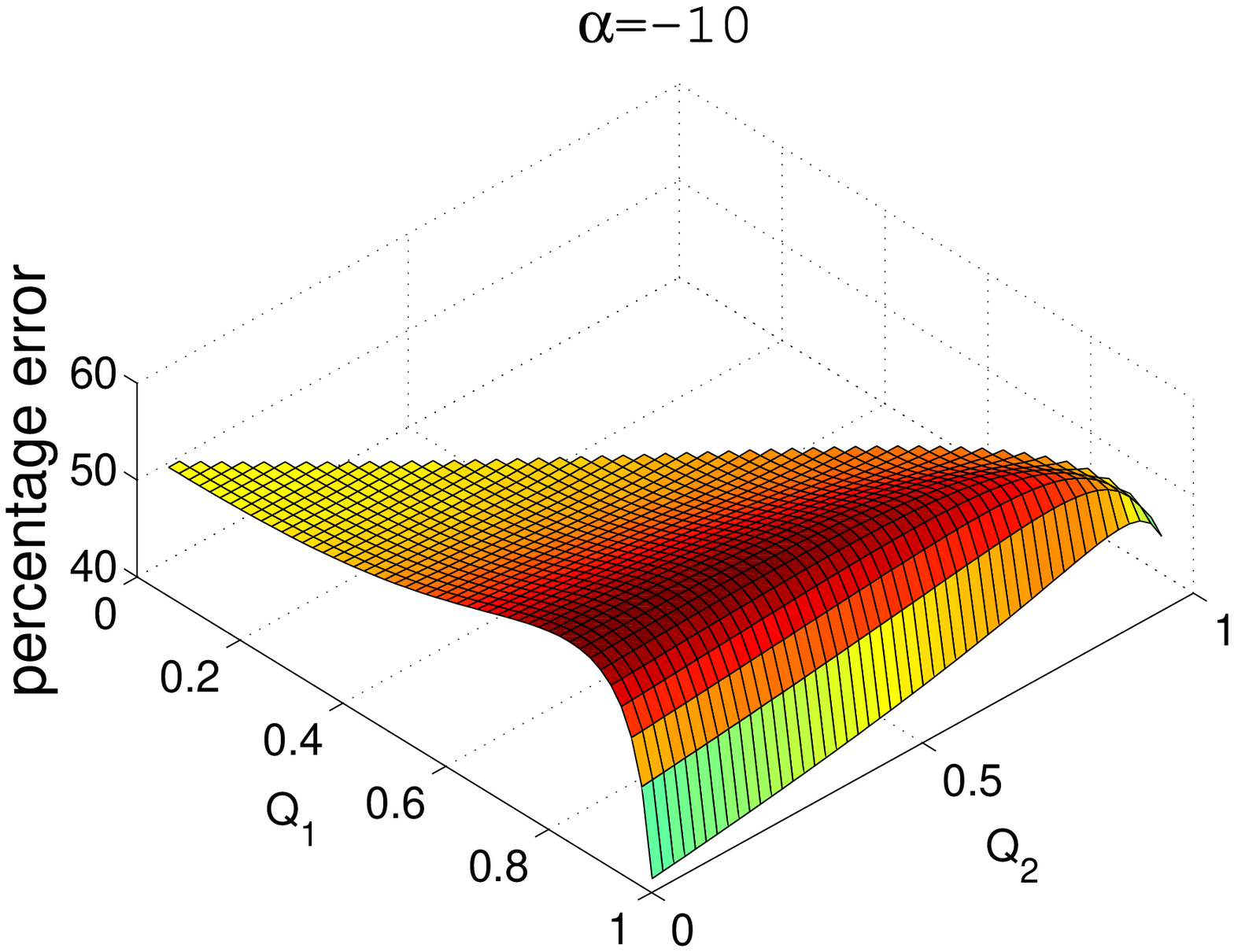}
\caption{Plot of percentage errors associated to the bound of Theorem~\ref{Thm.bound} in the class of parallelepipeds, as a function of $Q_1$ and $Q_2$.
}
\label{fig:bound3dpar}
\end{figure}
\subsection{Conjectures}
The numerical simulations carried out support the conjecture, already formulated in~\cite{FK7}, that there is a switch betwen maximisers,
with spherical shells becoming the maximisers for sufficiently large (negative) values of the parameter. We may now be more precise.

\begin{Conjecture}
 There exists a negative value of $\alpha$, say $\alpha^{*}$, such that the first eigenvalue of problem~\eqref{eq:robin} is
 maximised by the ball among all the domains with equal volume, for $\alpha\in(\alpha^{*},0)$.
 
 For $\alpha$ smaller than $\alpha^{*}$, the maximiser becomes a spherical shell whose radii increase as $\alpha$ decreases.
 
 The actual values of $\alpha^{*}$ and the radii of the shells depend on the dimension and the volume only.
\end{Conjecture}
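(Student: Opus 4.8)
Since this statement is a conjecture, I outline a strategy rather than a complete proof, and point to where the essential difficulties lie. The plan is to split it into three parts: (i)~show that the ball is the unique maximiser for~$\alpha$ in some left-neighbourhood of~$0$; (ii)~show that for all sufficiently large negative~$\alpha$ the maximiser is a spherical shell, and that the radii of the optimal shell tend to infinity as $\alpha\to-\infty$; (iii)~show that the switch occurs at a single threshold~$\alpha^*$ and that~$\alpha^*$ and the optimal radii are determined by~$d$ and $|\Omega|$. Part~(iii) is largely a matter of scaling: from the homogeneity $\lambda_1^\beta(c\Omega)=c^{-2}\lambda_1^{c\beta}(\Omega)$ one finds that the maximiser at volume~$c^d V$ and parameter~$\beta$ is~$c$ times the maximiser at volume~$V$ and parameter~$c\beta$, whence $\alpha^*(V)=K_d\,V^{-1/d}$ and the optimal radii scale like~$V^{1/d}$; granted the uniqueness asserted in~(i)--(ii), every quantitative feature of the maximiser is then a function of $(d,V,\alpha)$ alone. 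So the real content is~(i), (ii), and the monotonicity needed to isolate one threshold.

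For~(i), the case $d=2$ is already established in~\cite{FK7} (Bareket's conjecture for small~$|\alpha|$, with smallness depending only on the area), so the task is to reach $d\ge3$. I would pursue two routes in parallel. The first is to run the Bossel--Daners free-boundary argument of~\cite{Bossel_1986,Daners_2006} --- which yields \emph{minimality} of the ball for $\alpha>0$ --- with the sign of~$\alpha$ reversed; I expect this to succeed as long as an auxiliary one-dimensional comparison on the ball retains the monotonicity it has for $\alpha>0$, exactly the kind of radial input supplied here by Lemma~\ref{Lem.monotonicity} and Theorem~\ref{Thm.monotonicity}, and the breakdown of that monotonicity for large~$|\alpha|$ should pin down the admissible range. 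The second route is a stability argument: \eqref{LOS} together with the isoperimetric inequality already makes the ball the maximiser to first order in~$\alpha$, so one would combine a \emph{quantitative} isoperimetric inequality with a bound on the $O(\alpha^2)$ remainder in~\eqref{LOS} that is uniform over Lipschitz competitors of given volume, the point --- as in~\cite{FK7} --- being that the smallness threshold for~$|\alpha|$ depends on the volume only. The hard part of~(i) is precisely this: a \emph{global} maximality theorem for the ball in dimension $d\ge3$ is not available through the elementary techniques of this paper, and making the perturbative remainder uniform over all admissible competitors is delicate.

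For~(ii), I would start from the sharp boundary-layer asymptotics for the Robin Laplacian with a large negative parameter (in the spirit of~\cite{Levitin-Parnovski_2008} and its refinements): for smooth~$\partial\Omega$,
\[
  \lambda_1^\alpha(\Omega) = -\alpha^2 + \alpha\,\max_{\partial\Omega} H + o(\alpha)
  \,,\qquad \alpha\to-\infty
  \,,
\]
where~$H$ is the sum of the principal curvatures of~$\partial\Omega$ with respect to the outer normal (so $H\equiv(d-1)/R$ on~$\partial B_R$, consistently with~\eqref{as.ball}). Since $\alpha<0$, maximising the second-order term means making $\max_{\partial\Omega}H$ as small as possible under the volume constraint, and this is exactly where a spherical shell improves on the ball: its outer sphere can be taken of radius $R_2$ larger than the radius of the equal-volume ball, hence of smaller curvature (\cf~\eqref{as.shell}). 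This heuristic does not by itself select a \emph{finite} optimal shell, so I would then pass to the finer asymptotics of the radial problem on $A_{R_1,R_2}$ --- the two-sided bounds of Theorem~\ref{Thm.shell} and their sharpenings --- and optimise the resulting expression over $\{R_2^d-R_1^d=\mathrm{const}\}$, expecting a unique maximiser whose radii grow monotonically as $\alpha\to-\infty$. The genuine obstacle in~(ii) is to upgrade the second-order \emph{asymptotic} comparison into a \emph{strict} inequality valid simultaneously for all admissible~$\Omega$, \ie\ to obtain a remainder in the boundary-layer expansion that is uniform over fixed-volume Lipschitz domains --- or, failing that, to devise a rearrangement/monotonicity argument that bypasses the asymptotics.

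Finally, for~(iii), once~(i) and~(ii) provide strict maximality of the ball near~$0$ and of a definite shell near~$-\infty$, it remains to check that $\alpha\mapsto\big(\max_{|\Omega|=V}\lambda_1^\alpha(\Omega)\big)-\lambda_1^\alpha(B)$ changes sign exactly once; I would try to read this off the explicit $\alpha$-dependence of the transcendental equations governing the radial eigenvalues of balls and shells (\cf~\eqref{implicit.bis}), showing that once a shell overtakes the ball it stays ahead. In short, the structural and scaling parts of the conjecture look within reach of the methods developed here, whereas the two \emph{global} optimality statements --- ball-maximality for small~$|\alpha|$ when $d\ge3$, and shell-maximality among \emph{all} competitors for large~$|\alpha|$ --- are the main obstacles and seem to require genuinely new ideas.
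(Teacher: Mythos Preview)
The statement is a \emph{conjecture}, and the paper offers no proof of it: it is formulated in Section~5.3 as an open problem supported by the numerical evidence of Section~5.1 and by the asymptotic comparison~\eqref{as.ball}--\eqref{as.shell} between balls and shells. You correctly recognised this and offered a research programme rather than a proof, so there is nothing in the paper to compare your argument against line by line.

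That said, your outline is a reasonable plan of attack and is consistent with what the paper does establish. The scaling argument in your part~(iii) is correct and elementary. Your part~(i) correctly isolates the $d\ge3$ case as genuinely open; the two routes you propose (a sign-reversed Bossel--Daners scheme, or a quantitative-isoperimetric control of the $O(\alpha^2)$ remainder in~\eqref{LOS}) are natural, though neither is known to work globally. Your part~(ii) invokes the refined asymptotics $\lambda_1^\alpha(\Omega)=-\alpha^2+\alpha\max_{\partial\Omega}H+o(\alpha)$, which goes beyond what the paper states (\cf~the cruder $o(\alpha^2)$ form cited from~\cite{Levitin-Parnovski_2008}) but is indeed available in the literature and gives the right heuristic for why shells beat balls. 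You also correctly flag the two genuine obstacles: making the ball-maximality global in $d\ge3$, and turning the asymptotic shell advantage into a uniform strict inequality over all competitors. These are precisely the gaps that keep the statement a conjecture.
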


In two dimensions, imposing the extra condition that the domain is simply connected will strongly restrict maximisers.
\begin{Conjecture}
 In two dimensions the disk maximises 
the first eigenvalue of problem~\eqref{eq:robin} for negative $\alpha$, 
among all simply connected domains with the same area.
\end{Conjecture}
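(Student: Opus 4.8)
This conjecture is not a consequence of any of the bounds obtained above, and we expect its proof to require a new ingredient. As a preliminary reduction, note that the small‑$|\alpha|$ regime is already covered by the resolution of Bareket's conjecture in~\cite{FK7}: there is $\alpha_0<0$, depending only on $|\Omega|$, such that $\lambda_1^\alpha(\Omega)\le\lambda_1^\alpha(B)$ for every $\alpha\in(\alpha_0,0]$ and every bounded planar $\Omega$ with $|\Omega|=|B|$, \emph{without} any topological hypothesis. The substance of the conjecture therefore lies in the range $\alpha\le\alpha_0$, where simple connectivity is genuinely needed: for $\alpha<\alpha_1^\ast$ there are multiply connected annuli of area $|B|$ beating the disk, as recalled above. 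In particular, the parallel‑coordinate estimate of Theorem~\ref{Thm.better} cannot by itself settle the conjecture, since the Robin--Neumann annulus $A_{R_1,R_2}$ it produces has outer radius $R_2=\sqrt{R_B^2+R_1^2}>R_B$ whenever $R_1>0$ and, since the first eigenfunction concentrates on the outer (Robin) circle, $\mu_1^\alpha(A_{R_1,R_2})=-\alpha^2+\alpha/R_2+o(\alpha)$, which \emph{exceeds} $\lambda_1^\alpha(B)=-\alpha^2+\alpha/R_B+o(\alpha)$ for $|\alpha|$ large. Thus simple connectivity must enter the argument more forcefully than merely through the isoperimetric deficit encoded in $R_1$.

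The plan is a two‑regime argument. For $\alpha\to-\infty$ one would invoke the boundary‑concentration asymptotics for the Robin Laplacian on a $C^2$ planar domain,
\[
  \lambda_1^\alpha(\Omega)=-\alpha^2+\kappa_{\max}(\partial\Omega)\,\alpha+o(\alpha)\,,\qquad \alpha\to-\infty\,,
\]
where $\kappa_{\max}(\partial\Omega)$ is the largest signed curvature of $\partial\Omega$ (for $\Omega=B_R$ this is $1/R$, consistent with~\eqref{as.ball}). At leading and subleading order the desired inequality is then equivalent to the purely geometric statement that a bounded simply connected $C^2$ domain $\Omega\subset\Real^2$ with $|\Omega|=\pi R^2$ satisfies $\kappa_{\max}(\partial\Omega)\ge\tfrac{1}{R}$, with equality only for the disk. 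For convex $\Omega$ this follows from Blaschke's rolling‑ball theorem: if all curvatures of a planar convex body are $\le 1/R$, a disk of radius $R$ rolls freely inside it, whence $|\Omega|\ge\pi R^2$ with equality only for $B_R$. The non‑convex case is more delicate, because passing to the convex hull loses area; here one would combine the total‑curvature identity $\int_{\partial\Omega}\kappa\,\der s=2\pi$ -- which already yields $\kappa_{\max}(\partial\Omega)\ge 2\pi/|\partial\Omega|$ -- with a quantitative estimate of how much positive curvature at the ``shoulders'' of a concave boundary arc must compensate its negative turning, together with the isoperimetric inequality. One also needs a second‑order refinement to rule out non‑round competitors realising $\kappa_{\max}=1/R$, for which the harmonic‑oscillator correction in the effective one‑dimensional operator near the point of maximal curvature would otherwise push the eigenvalue above that of the disk. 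Establishing this geometric input, uniformly, is the first substantial step.

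The main obstacle is the intermediate range of $\alpha$, where neither the perturbative expansion nor the asymptotics applies and Theorem~\ref{Thm.better} is too lossy. Since we seek an \emph{upper} bound on the Rayleigh quotient~\eqref{Rayleigh} -- and not the lower bound delivered by the level‑set method of Bossel and Daners~\cite{Bossel_1986,Daners_2006} in the positive‑$\alpha$ minimisation problem -- the natural strategy is a sharper choice of test function. For simply connected $\Omega$ the inward‑parallel sets $\Omega_t:=\{x\in\Omega:\dist(x,\partial\Omega)>t\}$ do not stabilise at an annular profile but collapse as $t$ increases to the inradius, so one should take a test function depending only on $\dist(\cdot,\partial\Omega)$ and supported on \emph{all} of $\Omega$. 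By the coarea formula this reduces~\eqref{Rayleigh} to a weighted one‑dimensional quotient with weight $\ell(t):=\mathcal H^1(\partial\Omega_t)$ satisfying $\ell(0)=|\partial\Omega|$, $\int_0^\infty\ell(t)\,\der t=|\Omega|$, and -- crucially, by Gauss--Bonnet applied to the components of $\Omega_t$ -- $\ell(t)\le|\partial\Omega|-2\pi t$; this last bound \emph{fails} for annuli (whose parallel sets retain constant perimeter), which is precisely where the topological hypothesis is felt. One must then show that the first eigenvalue of the resulting Sturm--Liouville problem is at most that of the disk, whose weight is $\ell_B(t)=2\pi(R-t)$. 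The difficulty is twofold: $\ell$ need not be as regular as $\ell_B$ (parallel sets may be disconnected, and $\ell$ need not be concave), and one must balance the competitor's larger boundary $\ell(0)=|\partial\Omega|\ge 2\pi R$ -- unfavourable, since it amplifies the negative boundary term -- against its area being concentrated at smaller $t$. Controlling this balance \emph{uniformly in} $\alpha<0$, presumably via a rearrangement of the profile $\ell$ that preserves $\int\ell$ and does not increase the relevant eigenvalue, while recovering the curvature inequality of the previous paragraph as $\alpha\to-\infty$, is exactly the step we do not currently see how to carry out, and is where a complete proof would have to stand or fall.
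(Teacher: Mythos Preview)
The statement you are addressing is a \emph{conjecture} in the paper, not a theorem: the authors offer no proof, and the statement appears in the final subsection as one of four conjectures ``supported'' by the numerical simulations. There is therefore no proof in the paper against which to compare your proposal.

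Your text is accordingly not a proof either, and you say as much: you isolate the small-$|\alpha|$ regime (already handled in~\cite{FK7}), correctly explain why Theorem~\ref{Thm.better} cannot close the gap for large $|\alpha|$ (the Robin--Neumann annulus has outer radius strictly exceeding that of the equal-area disk, so its asymptotics beat the disk), and then sketch a two-regime strategy whose decisive step --- a rearrangement-type comparison of the one-dimensional profiles $\ell(t)=\mathcal H^1(\partial\Omega_t)$ valid uniformly in $\alpha<0$ --- you explicitly flag as the part you ``do not currently see how to carry out.'' That is an honest assessment of an open problem, not a gap in an argument.

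One mathematical caution on the asymptotic half of your plan: the geometric inequality you need, that every simply connected $C^2$ planar domain of area $\pi R^2$ satisfies $\kappa_{\max}(\partial\Omega)\ge 1/R$, is itself nontrivial outside the convex case. Your Blaschke rolling-ball argument handles convex $\Omega$ cleanly, but the non-convex reduction you outline (total-curvature identity plus ``shoulder'' compensation) is heuristic; making it rigorous is a genuine sub-problem, not a routine step. So even the large-$|\alpha|$ regime is not yet under control in your sketch.
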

Simply connectedness is is clearly not enough to restrict maximisers to balls in higher dimensions and it becomes thus
necessary to impose stronger conditions. Although there are other possibilities such as requiring that the boundary be
connected, here we just explored the case where domains are convex.
\begin{Conjecture}
 The ball maximises the first eigenvalue of problem~\eqref{eq:robin} 
for negative~$\alpha$, among all
 convex domains with the same volume.
\end{Conjecture}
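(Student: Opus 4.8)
We sketch below the approach that seems most promising for proving this conjecture, together with the main obstacles. It is natural to treat the regimes of large and of small~$|\alpha|$ separately. For $\alpha\to-\infty$ one should invoke the sharp two-term boundary asymptotics of the first Robin eigenvalue on a~$C^2$ domain (due to Pankrashkin and Popoff, refining the one-term formula quoted above from \cite{Levitin-Parnovski_2008}),
\[
  \lambda_1^\alpha(\Omega)=-\alpha^2+\mathcal{H}_{\max}(\partial\Omega)\,\alpha+o(\alpha)
  \,, \qquad \alpha\to-\infty
  \,,
\]
where $\mathcal{H}_{\max}(\partial\Omega)$ denotes the maximum over $\partial\Omega$ of the sum of the principal curvatures, so that $\mathcal{H}_{\max}(\partial B_R)=(d-1)/R$ in agreement with~\eqref{as.ball}. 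Since $\alpha<0$, maximising the right-hand side over convex bodies of a prescribed volume is, to leading order, the same as \emph{minimising} $\mathcal{H}_{\max}$. (A convex body whose boundary has a corner of opening $\theta<\pi$ is not competitive in this regime: near such a corner the Robin operator with $\alpha<0$ behaves like the one on an infinite wedge, whose spectral bottom equals $-\alpha^2/\sin^2(\theta/2)<-\alpha^2$.) Thus the first step reduces to the purely geometric assertion that, among bounded convex bodies of a fixed volume, the ball is the unique minimiser of $\mathcal{H}_{\max}$. It is precisely this assertion that fails without the convexity restriction: by~\eqref{as.shell}, a spherical shell of the same volume as $B_{R_0}$ has $\mathcal{H}_{\max}=(d-1)/R_2<(d-1)/R_0$, attained on its outer sphere, which is why a shell eventually overtakes the ball among all sets.

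In dimension two the geometric assertion is elementary: if $\kappa_{\max}\le1/R_0$ along $\partial\Omega$ with $\pi R_0^2=|\Omega|$, then a disc of radius~$R_0$ rolls freely inside~$\Omega$, so $\Omega=L\oplus B_{R_0}$ for some convex set~$L$, and by the Steiner formula $|\Omega|=|L|+R_0\,|\partial L|+\pi R_0^2\ge\pi R_0^2$, with equality only when~$L$ reduces to a point; hence $\kappa_{\max}(\partial\Omega)\ge1/R_0$, strictly so unless $\Omega=B_{R_0}$. In dimension $d\ge3$ a bound on the \emph{sum} of the principal curvatures no longer forces a ball to roll inside, and one would have to replace the rolling-ball argument by a volume estimate controlled by $\mathcal{H}_{\max}$ --- say through the inradius, through a quermassintegral inequality, or by localising at the point of $\partial\Omega$ where the support function, measured from the Steiner point, is largest. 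This is the first genuine obstacle.

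The complementary regime of small~$|\alpha|$ is already settled, even without convexity, by \cite{Bareket_1977}, by the planar part of \cite{FK7}, and by \cite{Ferone-Nitsch-Trombetti_2015}. What remains is the \emph{intermediate} range of~$\alpha$, and here --- unlike in the fixed-perimeter problem --- there is no monotonicity to bridge the two ends: Theorem~\ref{Thm.better} together with Proposition~\ref{Prop.test} and Theorem~\ref{Thm.monotonicity} only compares~$\Omega$ with a ball of the same \emph{outer perimeter}, which for fixed volume is the wrong ball, so the mechanism of Theorem~\ref{Thm.perimeter} does not carry over. A quantitative form of the geometric assertion above, combined with uniform control of the remainder in the asymptotics and with the perturbative bounds for small~$|\alpha|$, would in principle close the gap by a compactness argument, but only once the $d\ge3$ geometry is understood. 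The most robust route to a full proof seems instead to be shape optimisation: the class of convex bodies of volume~$|B|$ on which $\lambda_1^\alpha$ exceeds $\lambda_1^\alpha(B)$ is, after translations, precompact for the Hausdorff distance, $\lambda_1^\alpha$ is continuous there, and it tends to $-\infty$ as a convex body degenerates to a lower-dimensional set (use a one-dimensional test function in the shrinking direction, cf.\ the analogue of Proposition~\ref{Prop.shrink} and Corollary~\ref{Prop.rectangle}); hence a maximiser~$\Omega^\ast$ exists and is a genuine $d$-dimensional convex body.

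If one can show that~$\partial\Omega^\ast$ is of class~$C^2$ and strictly convex, the Hadamard shape-derivative formula for simple Robin eigenvalues recalled earlier in this section forces the overdetermined boundary condition
\[
  |\nabla_{\partial\Omega^\ast}u|^2-\bigl(\lambda_1^\alpha(\Omega^\ast)+\alpha^2-H\alpha\bigr)\,u^2=\mathrm{const}
  \qquad\text{on }\partial\Omega^\ast
  \,,
\]
with~$u$ the first eigenfunction and $H$ the sum of the principal curvatures, after which one must prove a Serrin-type rigidity theorem asserting that this condition, together with~\eqref{eq:robin}, forces~$\Omega^\ast$ to be a ball. The two hard points are, in our view: (i) the regularity and strict convexity of the optimal shape --- in particular the exclusion of flat boundary pieces, where the convexity constraint is active and the Euler--Lagrange relation holds only as an inequality whose sign is governed by that of $\lambda_1^\alpha(\Omega^\ast)+\alpha^2$, itself changing with~$\alpha$; and (ii) the overdetermined rigidity, which in the Dirichlet limit collapses to Serrin's classical theorem but here involves the curvature of the free boundary and appears to require a new argument.
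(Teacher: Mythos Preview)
The statement you are addressing is labelled \emph{Conjecture} in the paper and is not proved there; it is put forward solely on the strength of the numerical evidence assembled in Section~5. There is therefore no ``paper's own proof'' to compare your proposal against.

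Your write-up is, by your own description, not a proof either but a survey of possible strategies together with the obstacles each one faces: the two-term asymptotics reduce the large-$|\alpha|$ regime to a geometric inequality about $\mathcal{H}_{\max}$ that you settle only in dimension two; the small-$|\alpha|$ regime is handled by existing literature; the intermediate range is left to a hoped-for compactness-plus-rigidity argument whose two hard steps (regularity/strict convexity of the optimal convex body, and a Serrin-type theorem for the Robin overdetermined condition) you explicitly leave open. This is a reasonable and well-informed discussion of the landscape, but it does not constitute a proof, and you should not present it as one. The conjecture remains open in the paper and remains open after your proposal.
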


Finally, our results support the conjecture that Theorem~\ref{Thm.perimeter} may be extended to any dimension.
\begin{Conjecture}
 The ball maximises the first eigenvalue of problem~\eqref{eq:robin} 
for negative $\alpha$, among all domains of equal surface area.
\end{Conjecture}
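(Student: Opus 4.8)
The plan is to carry out the argument behind Theorem~\ref{Thm.perimeter} in arbitrary dimension. That two-dimensional proof rests on three pillars: the parallel-coordinate upper bound of Theorem~\ref{Thm.better}, the one-dimensional test-function comparison of Proposition~\ref{Prop.test}, and the monotonicity of $R\mapsto\lambda_1^\alpha(B_R)$ from Theorem~\ref{Thm.monotonicity}. Two of the three are already available in every dimension: Theorem~\ref{Thm.monotonicity} is proved for all $d\ge2$, and Proposition~\ref{Prop.test} extends with the same proof, since it uses only that $[r^{d-1}\phi_1\phi_1']'=-\lambda_1^\alpha(B_{R_2})\,r^{d-1}\phi_1^2+r^{d-1}(\phi_1')^2\ge0$, which follows from $\lambda_1^\alpha(B_{R_2})\le0$, \cf~\eqref{bound.trivial}. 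Granting the $d$-dimensional analogue of Theorem~\ref{Thm.better}, one then argues exactly as in the proof of Theorem~\ref{Thm.perimeter}: the spherical shell $A_{R_1,R_2}$ whose outer sphere has the same surface area as the outer boundary component~$\Gamma_0$ of~$\Omega$ and whose volume equals $|\Omega|$ (such radii $0\le R_1<R_2$ exist by the isoperimetric inequality) gives
\[
  \lambda_1^\alpha(\Omega)\le\mu_1^\alpha(A_{R_1,R_2})\le\lambda_1^\alpha(B_{R_2})\le\lambda_1^\alpha(B_{R_3})=\lambda_1^\alpha(B)\,,
\]
where $B=B_{R_3}$ is the ball with $|\partial B_{R_3}|=|\partial\Omega|$ and $R_3\ge R_2$ since the outer surface area of~$\Omega$ never exceeds the total one. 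Thus the whole problem reduces to the $d$-dimensional version of Theorem~\ref{Thm.better}.

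For that reduction I would imitate the parallel-coordinate scheme of Payne and Weinberger~\cite{Payne-Weinberger_1961} as used in~\cite{FK7}. Foliate $\Omega$ by the inner parallel sets $\Omega_t:=\{x\in\Omega:\dist(x,\Gamma_0)>t\}$ of the outer boundary component and put $\ell(t):=\mathcal{H}^{d-1}(\{x\in\Omega:\dist(x,\Gamma_0)=t\})$. Inserting the test function $u:=\psi(\dist(\cdot,\Gamma_0))$ into~\eqref{Rayleigh}, using $|\nabla\dist|=1$ almost everywhere and the coarea formula, and discarding (since $\alpha\le0$) the boundary contributions from the inner components $\Gamma_1,\dots,\Gamma_N$, one obtains
\[
  \lambda_1^\alpha(\Omega)\le
  \frac{\displaystyle\int_0^{t_{\max}}\psi'(t)^2\,\ell(t)\,\der t+\alpha\,|\Gamma_0|\,\psi(0)^2}
  {\displaystyle\int_0^{t_{\max}}\psi(t)^2\,\ell(t)\,\der t}
  \qquad\text{for all }\psi\in W^{1,2}\big((0,t_{\max})\big)\,.
\]
The shell $A_{R_1,R_2}$ yields the same quotient with $\ell$ replaced by $\ell_*(t):=|\partial B_1|(R_2-t)^{d-1}$ on $(0,R_2-R_1)$, and by construction $\ell(0)=\ell_*(0)=|\Gamma_0|$ together with $\int_0^{t_{\max}}\ell=|\Omega|=\int_0^{R_2-R_1}\ell_*$. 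If the pointwise domination $\ell(t)\le\ell_*(t)$ holds, a one-dimensional rearrangement lemma of the type already used for $d=2$ in~\cite{FK7} converts these Rayleigh quotients into $\lambda_1^\alpha(\Omega)\le\mu_1^\alpha(A_{R_1,R_2})$; the cut-locus and regularity issues for $t\mapsto\ell(t)$ are handled as in Payne--Weinberger.

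The obstacle --- and the reason this remains only a conjecture --- is the geometric inequality $\ell(t)\le|\partial B_1|(R_2-t)^{d-1}$. Since $\ell(0)=|\partial B_{R_2}|$ and $\ell'(t)=-\int_{\{\dist(\cdot,\Gamma_0)=t\}}H_t$, with $H_t$ the mean curvature of the parallel hypersurface, an ODE comparison at a first contact point reduces it to a Minkowski-type inequality: the total mean curvature of every inner parallel hypersurface of~$\Gamma_0$ must dominate that of the round sphere of the same surface area, $\int_{\{\dist=t\}}H_t\ge(d-1)\,|\partial B_1|^{1/(d-1)}\,\ell(t)^{(d-2)/(d-1)}$. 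When $d=2$ this is the rigid equality $\int\kappa_t=2\pi$ furnished by the Gauss--Bonnet theorem (with extra loss across corners of the cut locus), which is precisely the rigidity underlying Theorem~\ref{Thm.perimeter}; for $d\ge3$ the total mean curvature is no longer a topological invariant --- the inequality is true for convex bodies by the Aleksandrov--Fenchel inequalities, hence for every inner parallel set of a convex~$\Omega$, but it fails for general nonconvex domains. Thus this plan cleanly reduces the conjecture to that single curvature-integral inequality; proving it in the required generality, or else producing a counterexample together with a better comparison domain, is the crux. For convex~$\Omega$, on the other hand, the inequality holds for all inner parallel sets, so the argument goes through and yields the conjecture within the class of convex domains.
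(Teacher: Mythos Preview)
This statement is a \emph{conjecture} in the paper; there is no proof to compare against. Your proposal does not claim to be a proof either, but rather a reduction of the conjecture to a single geometric inequality, and as such it is accurate and well organised.

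The reduction is sound. Proposition~\ref{Prop.test} and Theorem~\ref{Thm.monotonicity} are already stated and proved in the paper for every $d\ge2$, and the Payne--Weinberger parallel-coordinate construction together with the coarea formula produces exactly the one-dimensional Rayleigh quotient you write in any dimension. The passage from the weight $\ell$ to the spherical weight $\ell_*$ via the volume-matching change of variable $t\mapsto r(t)$, with the single requirement $|r'(t)|\le1$, is dimension-independent; your pointwise inequality $\ell(t)\le\ell_*(t)$ implies $r(t)\ge R_2-t$ and hence $|r'|\le1$, so the rearrangement step closes. You are right that this pointwise bound is equivalent, through the first variation of area, to the Minkowski-type inequality $\int_{\Sigma_t}H_t\ge(d-1)\,|\partial B_1|^{1/(d-1)}\,\ell(t)^{(d-2)/(d-1)}$ for every inner parallel hypersurface, and that for $d=2$ this is Gauss--Bonnet while for $d\ge3$ it fails for general smooth domains. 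That is precisely the obstruction, and the paper offers nothing beyond numerical evidence here.

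Your claim about convex domains is correct and is worth isolating as a result in its own right. Inner parallel bodies of a convex set are convex, and the Aleksandrov--Fenchel inequality $W_2^{\,d-1}\ge W_1^{\,d-2}W_d$ yields exactly the Minkowski inequality you need on each $\Sigma_t$; equivalently, $t\mapsto\ell(t)^{1/(d-1)}$ is concave with the same initial value and initial slope bound as $t\mapsto\ell_*(t)^{1/(d-1)}=|\partial B_1|^{1/(d-1)}(R_2-t)$, giving $\ell\le\ell_*$. So your scheme proves the fixed-surface-area conjecture for smooth convex domains in every dimension, which goes beyond what the paper establishes. The general case remains open for precisely the reason you isolate.
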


%
%

\providecommand{\bysame}{\leavevmode\hbox to3em{\hrulefill}\thinspace}
\providecommand{\MR}{\relax\ifhmode\unskip\space\fi MR }
\providecommand{\MRhref}[2]{%
  \href{http://www.ams.org/mathscinet-getitem?mr=#1}{#2}
}
\providecommand{\href}[2]{#2}

\end{document}